\title[Residual properties of $3$-manifold groups]{Residual properties of $3$-manifold groups I: Fibered and hyperbolic $3$-manifolds}
\newtheorem{thm}{Theorem}
\newtheorem{lemma}[thm]{Lemma}
\newtheorem{cor}[thm]{Corollary}
\newtheorem{prop}[thm]{Proposition}
\newtheorem{quest}[thm]{Question}
\numberwithin{thm}{section}
\newcommand{\al}{\alpha}
\newcommand{\gam}{\Gamma}
\newcommand{\bZ}{\mathbb{Z}}
\newcommand{\bR}{\mathbb{R}}
\newcommand{\bC}{\mathbb{C}}
\DeclareMathOperator{\Hom}{Hom}
\DeclareMathOperator{\Aut}{Aut}
\DeclareMathOperator{\Mod}{Mod}
\DeclareMathOperator{\End}{End}
\newcommand{\cL}{\mathcal{L}}
\newcommand{\bQ}{\mathbb{Q}}
\newcommand{\bH}{\mathbb{H}}
\newcommand{\whG}{\widehat{G}}
\newcommand{\whO}{\widehat{\mathcal{O}}}
\author[T. Koberda]{Thomas Koberda}
\address{Department of Mathematics\\ Harvard University\\ 1 Oxford St.\\ Cambridge, MA 02138 }
\email{ koberda@math.harvard.edu}
\subjclass{Primary 20E26; Secondary 57N10, 57M10}
\keywords{Fibered $3$-manifold, residually nilpotent, hyperbolic manifold}
\begin{document}
\begin{abstract}
Let $p$ be a prime.  In this paper, we classify the geometric $3$-manifolds whose fundamental groups are virtually residually $p$.  Let $M=M^3$ be a virtually fibered $3$-manifold.  It is well-known that $G=\pi_1(M)$ is residually solvable and even residually finite solvable.  We prove that $G$ is always virtually residually $p$.  Using recent work of Wise, we prove that every hyperbolic $3$-manifold is either closed or virtually fibered and hence has a virtually residually $p$ fundamental group.  We give some generalizations to pro-$p$ completions of groups, mapping class groups, residually torsion-free nilpotent $3$-manifold groups and central extensions of residually $p$ groups.
\end{abstract}
\maketitle
\begin{center}
\today
\end{center}
\tableofcontents
\section{Introduction and statement of results}
Let $M$ be a compact orientable $3$-manifold with toral boundary.  The following is a recent result of Perelman on a conjecture of Thurston (cf. \cite{T2}, \cite{P1}, \cite{P2}, \cite{P3}):
\begin{prop}[Geometrization theorem, cf. Thurston and Perelman]
Every oriented irreducible closed $3$-manifold with toral boundary can be cut along a finite collection of incompressible tori so that the resulting pieces are ``geometric", in the sense that they admit finite volume geometric structures modeled on one of the eight model geometries.
\end{prop}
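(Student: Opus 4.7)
The plan is to proceed in two stages, the first topological and the second analytic. First I would reduce the problem to the case of closed irreducible atoroidal manifolds. By the Kneser--Milnor prime decomposition and the JSJ (Jaco--Shalen--Johannson) decomposition along maximal collections of incompressible tori, every orientable irreducible $3$-manifold with (possibly empty) toral boundary splits canonically into Seifert-fibered pieces and atoroidal pieces. The Seifert-fibered pieces already carry one of six model geometries by Thurston's classical classification, so the substantive content is the hyperbolization of the closed atoroidal irreducible pieces: this is the Geometrization Conjecture in its sharpest form, subsuming Thurston's hyperbolization theorem for Haken manifolds.

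Second, I would follow Hamilton's Ricci flow program as completed by Perelman. Starting from an arbitrary Riemannian metric $g(0)$ on a closed irreducible $3$-manifold $M$, run the Ricci flow $\partial_t g_{ij} = -2 R_{ij}$. Hamilton's maximum-principle arguments dispose of a number of special cases (for example, positive Ricci curvature produces spherical space forms), but in general the flow develops finite-time singularities. The essential new inputs of Perelman are: (a) the monotonicity of the $\mathcal{W}$-entropy and the reduced volume, which together yield the no-local-collapsing theorem and strong curvature derivative estimates; (b) the canonical neighborhood theorem, identifying high-curvature regions as $\kappa$-solutions, locally modeled by shrinking round cylinders, $\epsilon$-necks, or capped necks; (c) Ricci flow with surgery, in which developing necks are excised and replaced by standard caps, so that the flow can be continued past each singular time; and (d) the long-time thick-thin decomposition, which after finitely many surgeries in each bounded interval shows that the rescaled thick part converges in the pointed Gromov--Hausdorff sense to a disjoint union of finite-volume hyperbolic manifolds, while the thin part collapses with locally bounded curvature and is therefore a graph manifold by the Cheeger--Fukaya--Gromov and Shioya--Yamaguchi collapsing theory.

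The principal obstacle is of course the entire Perelman machinery: the no-local-collapsing theorem, the detailed classification of three-dimensional $\kappa$-solutions, and above all the verification that the surgery process can be set up so that Perelman's monotonicity and noncollapsing estimates survive uniformly through arbitrarily many surgeries, with surgery parameters chosen in advance. In the non-hyperbolic case one additionally needs the finite-time extinction theorem of Perelman (independently obtained by Colding--Minicozzi via a min-max argument on sweepouts) to terminate the flow. Since the proposition is cited from \cite{T2,P1,P2,P3}, in practice I would not reprove any of this and would simply invoke those references; the point of recalling the theorem here is to fix terminology for the eight geometries and the canonical decomposition used in later sections.
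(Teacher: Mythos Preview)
The paper gives no proof of this proposition at all: it is stated purely as background, attributed to Thurston and Perelman with references \cite{T2,P1,P2,P3}, and then the eight geometries are listed. Your final paragraph already recognizes this, and your high-level outline of the JSJ reduction plus Perelman's Ricci-flow-with-surgery program is an accurate roadmap of the literature proof; for the purposes of this paper, simply citing the references is exactly what is done.
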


Recall that these geometries are $S^3$, $S^2\times\bR$, $\bR^3$, nil, sol, $\widetilde{PSL_2(\bR)}$, $\bH^2\times\bR$, and $\bH^3$.  The purpose of this paper is to develop some theory of pro-$p$ and pro-nilpotent groups which will allow us to understand the group theoretic properties of the fundamental groups of compact manifolds modeled on these geometries.

Throughout this paper, fix a prime $p$.  Let $M=M^3$ be a fibered $3$-manifold.  By this we shall consistently mean that there is a fibration \[\Sigma\to M\to S^1,\] where $\Sigma$ is a topological surface.  Once we are given a fibered $3$-manifold, we will fix a fibration on it once and for all.  We will assume that $\Sigma$ is obtained from a closed surface of some genus $g\geq 1$ by puncturing at finitely many points.  We will denote the monodromy of the fibration by $\psi$.  When it is necessary to prevent confusion, we will write $M_{\psi}$ for the fibered manifold with monodromy $\psi$.  We get a corresponding short exact sequence of groups \[1\to\pi_1(\Sigma)\to\pi_1(M)\to\bZ\to 1.\]  In notation which will persist throughout this paper, the copy of $\bZ$ is generated by a symbol $t$ which we call the {\bf stable letter}.  Since $\pi_1(\Sigma)$ is residually nilpotent, it follows that $\pi_1(M)$ is residually finite solvable.  Sometimes $\pi_1(M)$ might be residually nilpotent or even residually $p$, for instance when the monodromy of the fibration is the identity.  More generally, if the monodromy is a finite order mapping class, $\pi_1(M)$ will be virtually residually $p$  One of our goals will be to understand the conditions under which $\pi_1(M)$ will virtually residually $p$.

We will try to understand the failure or success of $\pi_1(M_{\psi})$ to be residually $p$ from the data of the mapping class $\psi$ and how it acts on the homology of the fiber.  We prove:

\begin{thm}\label{t:rp1}
Let $G=\pi_1(M)$, where $M$ is fibered with fiber $\Sigma$ and monodromy $\psi$.  If $\psi$ acts unipotently on $H_1(\Sigma,\bZ/p\bZ)$, then $G$ is residually $p$.  In particular, if $\psi$ is arbitrary then $G$ is virtually residually $p$.  If $\Sigma$ is a torus, then $G$ is residually $p$ if and only if $p$ divides the determinant of $A-I$, where $A\in SL_2(\bZ)$ is the matrix which expresses the action of $\psi$ on the homology of the torus.
\end{thm}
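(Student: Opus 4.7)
The plan is to exploit the Stallings $p$-lower central series $\{\gamma_n^p(K)\}$, defined by $\gamma_1^p(K) = K$ and $\gamma_{n+1}^p(K) = [K, \gamma_n^p(K)] \cdot (\gamma_n^p(K))^p$, so that $\bigcap_n \gamma_n^p(\pi_1(\Sigma)) = 1$ by residual $p$-ness of surface groups, each term is characteristic, and $F_n := \pi_1(\Sigma)/\gamma_n^p(\pi_1(\Sigma))$ is a finite $p$-group fitting into
\[ 1 \to F_n \to G/\gamma_n^p(\pi_1(\Sigma)) \to \bZ \to 1. \]
The goal is to construct finite $p$-group quotients of $G$ in which a prescribed element survives, by truncating the $\bZ$ factor. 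The crucial input, which I expect to be the main obstacle, is the classical result that for any finite $p$-group $F$, the kernel of $\Aut(F) \to \Aut(F/\Phi(F))$ is a $p$-group, where $\Phi(F) = [F,F]F^p$ is the Frattini subgroup. Since $F_n/\Phi(F_n) = H_1(\Sigma, \bF_p)$ for $n \geq 2$, the hypothesis that $\psi$ acts unipotently on $H_1(\Sigma, \bF_p)$ (so some $\psi^{p^j}$ acts trivially there) forces $\psi$ to have $p$-power order, say $p^{k_n}$, in $\Aut(F_n)$.

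Consequently $\langle t^{p^{k_n}} \rangle$ is central in $G/\gamma_n^p(\pi_1(\Sigma)) \cong F_n \rtimes_\psi \bZ$, and quotienting by $\langle t^{p^{k_n + m}} \rangle$ yields a finite $p$-group quotient $P_{n,m} := F_n \rtimes \bZ/p^{k_n + m}\bZ$ of $G$. Given $1 \neq g \in G$: if $g \notin \pi_1(\Sigma)$ the projection $G \to \bZ \to \bZ/p^m\bZ$ detects it, and if $g \in \pi_1(\Sigma)$ I choose $n$ with $g \notin \gamma_n^p(\pi_1(\Sigma))$ and project to $P_{n,m}$. For the second assertion, the image of $\psi$ in the finite group $\Aut(H_1(\Sigma, \bF_p))$ has some order $N$, so $\psi^N$ acts trivially (hence unipotently) on $H_1(\Sigma, \bF_p)$; the finite-index subgroup $\langle \pi_1(\Sigma), t^N \rangle \cong \pi_1(M_{\psi^N}) \leq G$ is then residually $p$ by the first part.

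For the torus, $A \in SL_2(\bZ)$ gives $\det(A - I) = 2 - \tr(A)$, so $A$ is unipotent modulo $p$ exactly when $p \mid \det(A-I)$; the forward implication of the equivalence follows from the unipotent case above. For the converse, suppose $p \nmid \det(A - I)$, so $A - I$ is invertible modulo $p$. I would prove by induction on $n$ that $\pi_1(\Sigma) \subseteq \gamma_n^p(G)$: given $v \in \pi_1(\Sigma) = \bZ^2$, choose $v' \in \bZ^2$ with $(A - I) v' \equiv v \pmod p$ and write $v = [t, v'] \cdot w^p$ with $w \in \pi_1(\Sigma)$; the inductive hypothesis places $v', w \in \gamma_n^p(G)$, so $[t, v'] \in [G, \gamma_n^p(G)]$ and $w^p \in (\gamma_n^p(G))^p$ are both in $\gamma_{n+1}^p(G)$. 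Thus $1 \neq \pi_1(\Sigma) \subseteq \bigcap_n \gamma_n^p(G)$, and since for finitely generated groups being residually $p$ is equivalent to the triviality of this intersection, $G$ is not residually $p$.
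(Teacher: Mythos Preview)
Your argument is correct. For the first two assertions it is essentially the paper's proof: both hinge on the fact that unipotent matrices over $\bF_p$ have $p$-power order together with the lemma that automorphisms of a finite $p$-group $P$ acting trivially on $P/\varphi(P)$ form a $p$-group; your use of the Stallings $p$-lower central series just makes explicit the cofinal family of characteristic $p$-quotients the paper invokes abstractly.

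For the torus case your route diverges from the paper's in a way worth noting. For the forward implication you observe that for $A\in SL_2(\bZ)$ one has $\det(A-I)=2-\tr(A)$, so $p\mid\det(A-I)$ is equivalent (via Cayley--Hamilton) to $A$ being unipotent modulo $p$, and then you simply invoke Part~1. The paper instead gives a self-contained argument: it conjugates $A-I$ over $\bQ$ to an explicit matrix and shows by a divisibility computation that $(A-I)^k\equiv 0\pmod{p}$ for large $k$, hence that the truncated semidirect products $(\bZ/p^k\bZ)^2\rtimes\bZ$ are nilpotent. Your reduction is cleaner and avoids this detour entirely. For the converse, the paper argues contrapositively by analyzing an arbitrary $p$-group quotient $P$ of $G_A$, showing the image of $\bZ^2$ cannot carry a nilpotent $(A-I)$-action when $A-I\in GL_2(\bZ/p\bZ)$; you instead prove directly by induction that $\bZ^2\subseteq\gamma_n^p(G)$ for all $n$, which is more constructive and arguably more transparent. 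Both approaches are short; yours has the advantage of making the obstruction (the nonvanishing of $\bigcap_n\gamma_n^p(G)$) completely explicit.
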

We shall show that it is generally not necessary for $\psi$ to act unipotently modulo $p$ on the homology for $G$ to be residually $p$.

We will then use recent work of Wise prove the following theorem, which positively answers a question due to Lubotzky:

\begin{thm}\label{t:hypgeneral}
Let $G< PSL_2(\bC)$ be a lattice.  Then $G$ is virtually residually $p$.
\end{thm}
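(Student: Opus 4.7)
The plan is to reduce the problem to the fibered case already handled by Theorem \ref{t:rp1}. First, by Selberg's lemma, any lattice $G < PSL_2(\bC)$ contains a torsion-free subgroup of finite index, so replacing $G$ by this subgroup it suffices to treat the case where $G=\pi_1(M)$ for a complete finite-volume hyperbolic $3$-manifold $M$. Since virtual residual $p$-ness passes between finite-index subgroups and overgroups (any finite-index subgroup of a virtually residually $p$ group is virtually residually $p$, and conversely), it is enough to find \emph{some} finite cover $\widetilde{M}\to M$ whose fundamental group is residually $p$.

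The second and crucial step is to invoke the virtual fibering theorem: every finite-volume hyperbolic $3$-manifold is virtually fibered. For cusped $M$ this is a consequence of Wise's work on virtually special cube complexes together with Agol's RFRS criterion, while for closed $M$ it is the theorem of Agol, whose hypothesis (that $\pi_1(M)$ is virtually RFRS) is supplied by Wise's theorem that the fundamental group of a closed hyperbolic $3$-manifold is virtually the fundamental group of a special cube complex, hence virtually embeds into a right-angled Artin group. In either case we obtain a finite cover $\widetilde{M}\to M$ which fibers over $S^1$.

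Having passed to the fibered cover, Theorem \ref{t:rp1} applies directly to $\pi_1(\widetilde{M})$: whatever the monodromy, the fundamental group of a fibered $3$-manifold is virtually residually $p$. Passing to one more finite-index subgroup produces a residually $p$ subgroup of finite index in $\pi_1(\widetilde{M})$, and hence in $G$.

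The only genuinely hard ingredient is the virtual fibering statement in the closed case; everything else is either Selberg's lemma, the permanence of virtual residual $p$-ness under commensurability, or the previously established Theorem \ref{t:rp1}. I would therefore organize the argument as a short citation-driven deduction, with the bulk of the work consisting of verifying that the hypotheses of Agol's and Wise's theorems are met for an arbitrary finite-volume hyperbolic $3$-manifold, and then checking the (straightforward) fact that residual $p$-ness is inherited from a residually $p$ finite-index subgroup up to a further finite-index passage.
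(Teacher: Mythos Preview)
Your argument is correct, but it takes a different route from the paper and invokes a substantially heavier external result. You appeal to the full virtual fibering theorem for \emph{all} finite-volume hyperbolic $3$-manifolds, including the closed case, which requires Agol's resolution of the virtual Haken conjecture. The paper avoids this. Instead it runs a Culler--Shalen style arithmetic argument: choose a faithful representation $\gam\to SL_2(K)$ over a number field $K$ with ring of integers $\mathcal{O}$, and let $\mathcal{B}$ be the finite set of rational primes lying under the prime ideals appearing in the denominators of a generating set. For $p\notin\mathcal{B}$ the image lies in $SL_2(\mathcal{O}_{(P)})$ and the standard congruence-subgroup filtration already shows $\gam$ is virtually residually $p$. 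For $p\in\mathcal{B}$ the representation does not land in $SL_2(\whO_P)$, so $\gam$ acts on the Bruhat--Tits tree of $SL_2(\widehat{K})$ without a global fixed point; Bass--Serre theory then gives a nontrivial splitting of $\gam$, which by the Epstein--Stallings--Waldhausen lemma forces $\bH^3/\gam$ to contain an incompressible surface. Thus the manifold is Haken, and only now does the paper invoke Wise's theorem (for Haken hyperbolic manifolds) together with Theorem~\ref{t:rp1}. The upshot is that the paper needs only Wise's quasiconvex-hierarchy result and never the closed non-Haken case, whereas your deduction, while shorter to state, rests on Agol's much deeper theorem, which in fact postdates the paper.
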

In particular, the conclusion holds for the fundamental group of any finite volume hyperbolic $3$-manifold.  Whereas general theory of finitely generated linear groups gives us the conclusion for all but finitely many primes, it seems that this fact is particular to hyperbolic orbifolds.

In \cite{Wi}, Wilton classifies all compact $3$-manifolds whose fundamental groups are limit groups.  Any such manifold has a residually $p$ fundamental group for each $p$.  However, most of the manifolds considered in this paper are not on the list of residually free $3$-manifolds.  He also asks which compact $3$-manifolds have residually torsion-free nilpotent fundamental groups, a question which we shall explore later in the paper.  Note that any such group is residually $p$ for all $p$.  As for Wilton's question:

\begin{thm}\label{t:rtfn}
Suppose $M$ is a fibered manifold with fiber $\Sigma$ and suppose that the monodromy of the fibration acts unipotently on the homology of $\Sigma$.  Then $\pi_1(M)$ is residually torsion-free nilpotent.
\end{thm}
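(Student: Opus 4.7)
The plan is to exhibit, for each nontrivial $g \in \pi_1(M)$, a torsion-free nilpotent quotient of $\pi_1(M)$ to which $g$ maps nontrivially. The candidate quotients are
\[
G_k := \pi_1(M)/\gamma_k \pi_1(\Sigma),
\]
where $\gamma_k \pi_1(\Sigma)$ denotes the $k$th term of the lower central series of $\pi_1(\Sigma)$.  Since this subgroup is characteristic in $\pi_1(\Sigma)$ it is normal in $\pi_1(M)$, and $G_k$ fits into an extension
\[
1 \to N_k \to G_k \to \bZ \to 1, \qquad N_k := \pi_1(\Sigma)/\gamma_k \pi_1(\Sigma),
\]
in which the stable letter $t$ acts on $N_k$ by the automorphism induced by $\psi$. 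Because $\pi_1(\Sigma)$ is residually torsion-free nilpotent (Magnus in the punctured case, Labute in the closed case), $\bigcap_k \gamma_k \pi_1(\Sigma) = 1$; combined with the projection $G_1 \to \bZ$, which detects any element of $\pi_1(M)$ outside $\pi_1(\Sigma)$, this shows that the family $\{G_k\}$ separates points of $\pi_1(M)$. It therefore suffices to verify that each $G_k$ is torsion-free nilpotent.

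Torsion-freeness is immediate from the extension, since the LCS quotients $\gamma_i \pi_1(\Sigma)/\gamma_{i+1}\pi_1(\Sigma)$ are torsion-free abelian and hence $N_k$ is torsion-free.

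For nilpotency of $G_k$ the crucial structural input is that the associated graded Lie algebra of $\pi_1(\Sigma)$ is generated in degree $1$. Consequently, the hypothesis that $\psi$ acts unipotently on $H_1(\Sigma, \bZ)$ upgrades automatically to unipotence of $\psi$ on every graded piece $\gamma_i N_k/\gamma_{i+1} N_k$. I would then argue by induction on the nilpotency class $c$ of $N_k$. When $c=1$, a direct commutator computation in the semi-direct product yields
\[
\gamma_{j+1} G_k = (\psi - 1)^j N_k \qquad (j \geq 1),
\]
which vanishes once $j$ exceeds the unipotent depth of $\psi$ on $N_k$. When $c > 1$, the subgroup $\gamma_c N_k$ is abelian and central in $N_k$; applying the inductive hypothesis to $G_k/\gamma_c N_k$ produces some $m$ with $\gamma_m G_k \subseteq \gamma_c N_k$, after which centrality together with the unipotent action of $\psi$ on $\gamma_c N_k$ extends the base-case calculation to give $\gamma_{m+j} G_k \subseteq (\psi - 1)^j \gamma_c N_k$, which vanishes for large $j$.

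The main obstacle is this last step: one must produce a genuinely nilpotent quotient, not merely a residually nilpotent one. The unipotence hypothesis is exactly what the argument requires, and it is essential that it propagates from $H_1(\Sigma, \bZ)$ to every graded piece of $N_k$ via the degree-$1$ generation of the associated graded Lie algebra of $\pi_1(\Sigma)$. Without unipotence, one could only hope for the weaker conclusion of Theorem \ref{t:rp1}.
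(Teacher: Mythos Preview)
Your proposal is correct and follows essentially the same approach as the paper: both consider the quotients $\pi_1(M)/\gamma_k\pi_1(\Sigma)$, propagate unipotence from $H_1(\Sigma,\bZ)$ to every graded piece via degree-$1$ generation of the associated graded Lie algebra (the paper phrases this as a spectrum/tensor-product argument in Lemma~\ref{l:porder}), and then establish nilpotency of each quotient by inductively peeling off a central layer on which $\psi-1$ acts nilpotently. Your handling of torsion-freeness (extension of torsion-free by torsion-free) and your explicit computation $\gamma_{m+j}G_k\subseteq(\psi-1)^j\gamma_cN_k$ are somewhat more streamlined than the paper's parallel passages, but the substance is the same.
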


In order to gain a deeper appreciation for the residual properties of $3$-manifold groups which center around residual nilpotence, we will develop some theory of residually $p$ groups in order to understand the connection between residually $p$ and residually torsion-free nilpotent groups.

This paper stems from an attempt to understand the action of a mapping class on the homology of a surface, or more generally on the homology of a finite cover.  Of particular interest was how mapping classes in the Torelli group $I(\Sigma)$ act on finite covers.  The hope was initially that the residual properties of the resulting mapping torus would give some insight, though it did not.  We do however obtain the following:

\begin{thm}\label{t:rp3}
Suppose $G_{\psi}=\pi_1(M_{\psi})$ is residually $p$ and let $\phi$ be in the kernel of the modulo $p$ homology representation $\Mod(\Sigma)\to Sp_{2g}(\bZ/p\bZ)$.  Then $G_{\psi\circ\phi}$ is residually $p$.
\end{thm}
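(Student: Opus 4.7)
The plan is to reduce Theorem \ref{t:rp3} to Theorem \ref{t:rp1}. Specifically, it suffices to show that the monodromy $\psi \circ \phi$ acts unipotently on $V := H_1(\Sigma, \bZ/p\bZ)$, since Theorem \ref{t:rp1} then immediately yields that $G_{\psi \circ \phi}$ is residually $p$.

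The first and easier step is immediate from the hypothesis: because $\phi$ lies in the kernel of $\Mod(\Sigma) \to Sp_{2g}(\bZ/p\bZ)$, $\phi$ acts trivially on $V$. Hence the action of $\psi \circ \phi$ on $V$ coincides with that of $\psi$, and the problem reduces to showing that $\psi$ itself acts unipotently on $V$, using only the assumption that $G_\psi$ is residually $p$.

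To establish this, I would prove the converse of the first part of Theorem \ref{t:rp1}: if $G_\psi$ is residually $p$, then $\psi$ must act unipotently on $V$. The strategy is to analyze finite $p$-quotients of $G_\psi = \pi_1(\Sigma) \rtimes_\psi \bZ$ via the Zassenhaus $p$-filtration $\{F_n^p\}$ of $F := \pi_1(\Sigma)$. The key input is that the kernel of $\Aut(F/F_n^p) \to GL(V)$ is a $p$-group, since any automorphism trivial on $V$ preserves the Zassenhaus filtration and acts unipotently on the successive graded pieces, all finite-dimensional $\bF_p$-vector spaces. Consequently $\psi$ has $p$-power order on each $F/F_n^p$ if and only if its image in $GL(V)$ has $p$-power order, which in characteristic $p$ is equivalent to unipotence. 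An analysis of the finite $p$-quotients of $G_\psi$ shows that residual $p$-ness forces $\psi$'s image in $GL(V)$ to have $p$-power order; the proof for the torus case of Theorem \ref{t:rp1} is the model (there the relevant condition $p \mid \det(A - I)$ is precisely the condition that $A$ be unipotent mod $p$).

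The main obstacle is the converse direction just described. The delicacy is that residual $p$-ness is not inherited by arbitrary quotients, so one cannot simply pass to the quotient $V \rtimes_\psi \bZ$ of $G_\psi$ and argue there directly; instead one must work with the Zassenhaus filtration on $F$ and carefully track how the semidirect product structure constrains the finite $p$-quotients of $G_\psi$ (including controlling which $\psi$-invariant subgroups $N \lhd F$ admit a compatible $t^{p^k}$ relation making $F/N \rtimes \bZ/p^k$ into a finite $p$-group). Once the converse is in hand, the theorem follows by combining the three facts: $\psi$ acts unipotently on $V$ by the converse, $\phi$ acts trivially on $V$ by hypothesis, and hence $\psi \circ \phi$ acts unipotently on $V$, whereupon Theorem \ref{t:rp1} concludes that $G_{\psi \circ \phi}$ is residually $p$.
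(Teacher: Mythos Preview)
Your plan rests on proving a converse to Theorem \ref{t:rp1}: that if $G_\psi$ is residually $p$ then $\psi$ acts unipotently on $H_1(\Sigma,\bZ/p\bZ)$. This converse is \emph{false} when $\Sigma$ is not a torus, and the paper gives an explicit counterexample in Section \ref{s:mod}. Take $\beta=\sigma_1\sigma_2^{-1}\in B_3$ acting on the thrice-punctured disk and lift $\beta^3$ to the double cover $\Sigma$, a four-times punctured torus. Since $\beta^3$ is a pure braid, $G_{\beta^3}$ is residually $p$ for every prime, and the subgroup $G'$ obtained by suspending the lift is therefore also residually $p$ for every prime. But the action of $\beta^3$ on $H_1(\Sigma,\bZ)$ contains the block $\begin{pmatrix}13&8\\8&5\end{pmatrix}$, which is unipotent only modulo $2$. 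So for any prime $p\neq 2$ one has a fibered manifold whose fundamental group is residually $p$ while the monodromy is not unipotent on $H_1(\Sigma,\bZ/p\bZ)$. Your Zassenhaus-filtration argument cannot succeed, because the statement it is meant to establish does not hold; the torus case is misleading precisely because there the fiber group is abelian, so the only $p$-quotients available are quotients of $H_1$ itself.

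The paper's proof avoids this trap by never asserting unipotence on the full $H_1(\Sigma,\bZ/p\bZ)$. Instead, given a $p$-group quotient $P$ of $G_\psi$ with $P'$ the image of $\pi_1(\Sigma)$, one only knows that some $p$-power of $\psi$ acts trivially on the \emph{quotient} $P'/\varphi(P')$ of $H_1(\Sigma,\bZ/p\bZ)$ --- this is automatic because the stable letter has $p$-power order in $P$. Since $\phi$ acts trivially on all of $H_1(\Sigma,\bZ/p\bZ)$, the same $p$-power of $\psi\circ\phi$ acts trivially on $P'/\varphi(P')$. One then intersects the finitely many $(\psi\circ\phi)$-translates of $\ker(\pi_1(\Sigma)\to P')$ to produce a $(\psi\circ\phi)$-invariant $p$-power index subgroup $K'$ with $\pi_1(\Sigma)/K'=Q'$ satisfying $Q'/\varphi(Q')\cong P'/\varphi(P')$; the resulting semidirect product is a $p$-group by Lemma \ref{l:porder}. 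The point is that the argument is quotient-by-quotient and only uses that $\psi$ and $\psi\circ\phi$ agree on $H_1(\Sigma,\bZ/p\bZ)$, not that either acts unipotently there.
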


\section{Acknowledgements}
This work has benefitted from conversations with Ian Agol, Nir Avni, Jason Behrstock, Tom Church, Benson Farb, Stefan Friedl, Alex Lubotzky, Curt McMullen and Ben McReynolds.  The idea for this paper arose from a conversation with Will Cavendish and Renee Laverdiere.  The author has been informed that in \cite{AF1} and \cite{AF2}, Matthias Aschenbrenner and Stefan Friedl have independently obtained many of the results in this paper.  The author was supported by an NSF Graduate Research Fellowship for part of this research.  The author is indebted to the referee for numerous helpful comments and suggestions.

\section{Tools for analyzing $p$-groups}\label{s:tools}
We will record with proofs some tools from the theory of $p$-groups which will be at the heart of many of the arguments in this paper.  The following is a standard fact about $p$-groups:

\begin{lemma}\label{l:tower}
Let $G$ be a group and $K_1,K_2$ two $p$-power index normal subgroups.  Then $G/(K_1\cap K_2)$ is a $p$-group.
\end{lemma}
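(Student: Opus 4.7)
The plan is to exhibit $G/(K_1 \cap K_2)$ as a subgroup of the finite $p$-group $G/K_1 \times G/K_2$, whence it is itself a $p$-group. Concretely, I would consider the homomorphism
\[
\Phi \colon G \to G/K_1 \times G/K_2, \qquad g \mapsto (gK_1,\, gK_2).
\]
A trivial check shows that $\ker \Phi = K_1 \cap K_2$, so by the first isomorphism theorem $\Phi$ descends to an injection $\overline{\Phi} \colon G/(K_1 \cap K_2) \hookrightarrow G/K_1 \times G/K_2$.

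Next I would observe that since $K_1, K_2$ are normal of $p$-power index, the quotients $G/K_1$ and $G/K_2$ are finite $p$-groups, and therefore their direct product is a finite $p$-group (its order is the product of two $p$-powers). Any subgroup of a finite $p$-group has order dividing the order of the ambient group and is hence itself a $p$-group. Applying this to the image of $\overline{\Phi}$ gives the claim.

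There is really no obstacle here: the only thing to verify is the identification of $\ker \Phi$, which is immediate from the definitions, and the elementary group-theoretic fact that subgroups and finite direct products of $p$-groups are $p$-groups. The lemma is set up as a basic tool, and the proof is essentially a one-line application of the diagonal embedding trick.
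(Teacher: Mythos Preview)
Your argument is correct. The diagonal map $\Phi$ has kernel $K_1\cap K_2$, the target is a finite $p$-group, and subgroups of $p$-groups are $p$-groups; nothing more is needed.

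The paper takes a slightly different route. Rather than embedding $G/(K_1\cap K_2)$ into a product, it works internally with the tower $G>K_2>K_1\cap K_2$. It forms $H=K_1K_2$, observes that $H$ is normal of $p$-power index in $G$, and applies the second isomorphism theorem to get $H/K_1\cong K_2/(K_1\cap K_2)$, so the lower step of the tower has $p$-power index; since the upper step does too, the result follows by multiplicativity of indices. Your approach is arguably cleaner and more conceptual, and it generalizes immediately to any finite family of such subgroups without an induction. The paper's argument, on the other hand, stays inside $G$ and makes the index computation $[G:K_1\cap K_2]=[G:K_2][K_2:K_1\cap K_2]$ explicit, which is occasionally useful when one wants the exact power of $p$ rather than just the qualitative statement. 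Either proof is perfectly adequate for the lemma's role as a basic tool.
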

\begin{proof}
Since $K_1$ and $K_2$ normalize each other, we can form the group $H=K_1K_2$.  Evidently $H$ is normal in $G$ and has $p$-power index.  By the second isomorphism theorem for groups, we have that $H/K_1\cong K_2/(K_1\cap K_2)$.  The left hand side is obviously a $p$-group, and we also have $G>K_2>(K_1\cap K_2)$, whence $G/(K_1\cap K_2)$ is a $p$-group.
\end{proof}

Without the assumption that at least $K_1$ is normal, the intersection $K_1\cap K_2$ may fail to have $p$-power index as can be seen by considering $A_4<A_5$ and noting that the intersection of all conjugates of $A_4$ is trivial.

If $G$ is a finitely generated group, then it has a maximal subgroup.  Recall that the intersection of all maximal subgroups of $G$ is called the {\bf Frattini subgroup} of $G$, and is denoted $\varphi(P)$.  The Frattini subgroup of any group is obviously characteristic.

\begin{lemma}
Let $P$ be a $p$-group.  Then $P/\varphi(P)$ is the largest elementary abelian quotient of $P$.
\end{lemma}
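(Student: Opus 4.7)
The plan is to show two things: first, that $P/\varphi(P)$ is elementary abelian; and second, that every elementary abelian quotient of $P$ factors through $P/\varphi(P)$. Both will rest on the key fact that in a finite (or pro-)$p$-group, every maximal subgroup is normal and of index exactly $p$.

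First I would establish this normality/index lemma. For a finite $p$-group this is standard: a proper subgroup $H<P$ is properly contained in its normalizer $N_P(H)$ (proof via the action of $H$ on $P/H$ and the fact that $p$-groups acting on sets of size not divisible by $p$ have fixed points), so a maximal subgroup $M$ must satisfy $N_P(M)=P$, hence $M\triangleleft P$. Then $P/M$ is a simple $p$-group, hence cyclic of order $p$. I would note that the same conclusion holds for pro-$p$ groups by the corresponding profinite argument, since the only $p$-group statement we use later is this one.

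Next, observe that $\varphi(P)$ is precisely the intersection of all index-$p$ normal subgroups, so the natural map
\[
P/\varphi(P)\ \hookrightarrow\ \prod_{M\ \text{maximal}} P/M
\]
embeds $P/\varphi(P)$ into a product of copies of $\bZ/p\bZ$. Thus $P/\varphi(P)$ is elementary abelian.

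For the universal property, let $Q$ be any elementary abelian quotient, say $\pi\colon P\twoheadrightarrow Q$ with $Q\cong(\bZ/p\bZ)^n$. Each coordinate projection gives a surjection $P\to\bZ/p\bZ$, and its kernel is a normal subgroup of index $p$, hence a maximal subgroup of $P$, and thus contains $\varphi(P)$. The kernel of $\pi$ is the intersection of these $n$ kernels, so $\ker(\pi)\supseteq\varphi(P)$, and $\pi$ factors through $P/\varphi(P)$. The main (really the only) obstacle is the maximal-subgroup lemma; once that is in hand the rest is bookkeeping.
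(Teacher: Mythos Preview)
Your proof is correct and follows essentially the same approach as the paper's: both hinge on the fact that in a $p$-group every maximal subgroup is normal of index $p$, from which it follows that $\varphi(P)$ is the intersection of the index-$p$ normal subgroups and hence $P/\varphi(P)$ is elementary abelian and universal among such quotients. The paper cites this maximal-subgroup fact from the literature on nilpotent groups (after a somewhat meandering attempt to argue directly), whereas you supply the standard normalizer argument yourself; your version is cleaner and more self-contained, but the underlying idea is identical.
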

\begin{proof}
Every subgroup of index $p$ in $P$ is normal, so that $P/\varphi(P)$ is at least as large as the largest elementary abelian quotient of $P$.  Suppose $M<P$ is a maximal subgroup of index $p^k$, $k>1$.  We may suppose that $[P,P]$ is not contained in $M$, for otherwise we would have the claim.  We must have that the image of $M$ in $P^{ab}$ is everything, for otherwise $M$ would not be maximal.  On the other hand, in a nilpotent group every maximal subgroup is normal of index $p$ for some prime (cf. \cite{KuSt}).
\end{proof}

The following lemma is important for proving a partial converse to Theorem \ref{t:rp1}:
\begin{lemma}
Let $P$ be a $p$-group whose abelianization is cyclic.  Then $P$ is cyclic.
\end{lemma}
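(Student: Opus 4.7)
The plan is to combine the hypothesis with the previous lemma via the Frattini subgroup. First I would observe that for any $p$-group $P$, every maximal subgroup is normal of index $p$ (a fact already invoked in the proof of the previous lemma). Hence every maximal subgroup of $P$ contains $[P,P]$, and intersecting gives $[P,P] \subseteq \varphi(P)$. It follows that $P/\varphi(P)$ is a quotient of $P^{\mathrm{ab}}$.

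By hypothesis $P^{\mathrm{ab}}$ is cyclic, so its quotient $P/\varphi(P)$ is cyclic as well. But by the previous lemma, $P/\varphi(P)$ is the largest elementary abelian quotient of $P$. A group that is simultaneously cyclic and elementary abelian of exponent $p$ is either trivial or isomorphic to $\bZ/p\bZ$.

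If $P/\varphi(P)$ is trivial, then $P = \varphi(P)$; assuming $P$ is a nontrivial finite $p$-group, this contradicts the fact that $\varphi(P)$ is a proper subgroup (being the intersection of proper maximal subgroups, which exist in any nontrivial finite group). So either $P$ is trivial (hence cyclic) or $P/\varphi(P) \cong \bZ/p\bZ$. In the latter case I would invoke the Burnside basis theorem: any lift to $P$ of a generator of $P/\varphi(P)$ generates all of $P$, because a proper subgroup of $P$ containing such a lift would be contained in a maximal subgroup and hence have image properly contained in $P/\varphi(P)$. Therefore $P$ is generated by a single element and is cyclic.

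The main subtlety is the ambient setting: the Burnside basis theorem in its usual form applies to finite $p$-groups (or to finitely generated pro-$p$ groups with topological generation). Since the statement of the lemma does not make this explicit, the cleanest approach is to carry out the argument for finite $p$-groups, which is exactly what is needed for the intended application to the partial converse of Theorem \ref{t:rp1}, where one analyzes finite $p$-quotients of $\pi_1(M)$.
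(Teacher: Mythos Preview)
Your proof is correct and follows essentially the same route as the paper: both deduce that $P/\varphi(P)$ is cyclic of order $p$ (or trivial) from the hypothesis and the previous lemma, and then use the nongenerator property of the Frattini subgroup to conclude that a single lift of a generator already generates $P$. The only cosmetic difference is that the paper verifies the nongenerator property inline, whereas you invoke it under the name of the Burnside basis theorem; your added remark about the finite $p$-group setting is a fair caveat that the paper leaves implicit.
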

\begin{proof}
If $P^{ab}$ is cyclic then $P/\varphi(P)\cong\bZ/p\bZ$.  Choose a generator $x$ for $P/\varphi(P)$.  Then $P$ is generated by $x$ and $\varphi(P)$.  A standard fact about the Frattini subgroup is that it consists of nongenerators of $P$.  Indeed, let $y\in\varphi(P)$ and let $S=\{x\}\cup\varphi(P)$.  If $S\setminus\{y\}$ does not generate $P$ then it generates a proper subgroup, which is in turn contained in a maximal proper subgroup $P'$.  But $y\in P'$ by definition.
\end{proof}

For any group $G$, we write $\gamma_i(G)$ for the $i^{th}$ term of the lower central series of $G$, and we write $\cL_i(G)$ for the quotient $\gamma_i(G)/\gamma_{i+1}(G)$.  For a prime $p$, we write $\cL_i(G,p)$ for $\cL_i(G)\otimes\bZ/p\bZ$.

Let $\al\in\Aut(G)$ of order $k$.  We will write $G_{\al}$ for the semidirect product \[1\to G\to G_{\al}\to\bZ/k\bZ\to 1,\] where the stable letter $t$ which generates $\bZ/k\bZ$ acts by $\al$ via conjugation.

The following result is a generalization of the fact that if $\al$ acts trivially on $H_1(G,\bZ)$ then it acts trivially on each quotient $\cL_i(G)$.
\begin{lemma}
Let $P$ be a $p$-group.  Suppose that $\al$ is unipotent as a matrix acting on $\cL_i(P,p)$ for all $i$.  Then $P_{\al}$ is a $p$-group.
\end{lemma}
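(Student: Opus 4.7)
The plan is to reduce the lemma to showing that the order of $\alpha$ in $\Aut(P)$ is a power of $p$: since $P$ is already a $p$-group, the semidirect product $P_\alpha$ is then a $p$-group. I will establish this by analyzing how $\alpha$ acts on the integral lower central quotients $\cL_i(P)$ (which are all finite abelian $p$-groups) and then climbing the lower central series.

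First, I would promote unipotence modulo $p$ to honest unipotence on $\cL_i(P)$. If $(\alpha-I)^N$ kills $\cL_i(P,p)$, then $(\alpha-I)^N\cL_i(P)\subseteq p\cL_i(P)$; because $\alpha-I$ commutes with multiplication by $p$, iterating gives $(\alpha-I)^{Nk}\cL_i(P)\subseteq p^k\cL_i(P)$, which vanishes for $k$ larger than the exponent of $\cL_i(P)$. So $\alpha$ is unipotent on the finite abelian $p$-group $\cL_i(P)$. Writing $\alpha=I+M$ with $M^d=0$ and expanding $\alpha^{p^m}=\sum_{j=0}^{d-1}\binom{p^m}{j}M^j$, the standard identity $v_p\binom{p^m}{j}=m-v_p(j)$ shows that for $m$ sufficiently large each coefficient with $j\ge 1$ is divisible by a higher power of $p$ than the exponent of $\cL_i(P)$. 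Thus $\alpha$ has $p$-power order on each graded piece $\cL_i(P)$.

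Next I would show by induction on $k$ that $\alpha$ has $p$-power order on $P/\gamma_{k+1}(P)$. The base case $k=1$ is the previous paragraph applied to $\cL_1(P)=P^{ab}$. For the inductive step, I use the central extension
\[
1\to \cL_k(P)\to P/\gamma_{k+1}(P)\to P/\gamma_k(P)\to 1,
\]
centrality being automatic from $[P,\gamma_k(P)]\subseteq\gamma_{k+1}(P)$. Choose $c$ large enough that $\beta:=\alpha^{p^c}$ acts trivially on both the quotient (by induction) and on $\cL_k(P)$ (by the previous step). Define $\delta(x):=\beta(x)x^{-1}$; since $\beta$ is trivial on the quotient, $\delta$ lands in the central subgroup $\cL_k(P)$, and a short computation using the centrality of $\cL_k(P)$ shows $\delta$ is a homomorphism. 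Then $\beta^n(x)=\delta(x)^n\cdot x$, so the order of $\beta$ divides the exponent of $\cL_k(P)$, which is a power of $p$. Hence $\alpha$ has $p$-power order on $P/\gamma_{k+1}(P)$.

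Since $P$ is a finite $p$-group it is nilpotent, so $\gamma_{c+1}(P)=1$ for some $c$, and the induction concludes with $\alpha$ having $p$-power order on $P$ itself; thus $P_\alpha$ is a $p$-group. The one step that I expect to be slightly subtle, rather than routine, is the inductive step, where one must extract a genuine homomorphism $\delta$ from the fact that $\beta$ is trivial on both the central kernel and the quotient, and observe that it is this exponent-controlled homomorphism, not $\beta$ itself, whose order governs everything; the rest of the argument is essentially bookkeeping with the lower central series and elementary $p$-adic valuations of binomial coefficients.
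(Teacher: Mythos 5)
Your proof is correct, and it takes a genuinely different route from the one in the paper. The paper first shows that $P_{\al}$ itself is nilpotent, by arguing that iterated commutators with the stable letter $t$ act on each graded piece $\cL_j(P)$ through nilpotent operators of the form $\prod_i (A(j)^i - I)^{n_i}$, so that sufficiently long commutators in $P_{\al}$ vanish; it then invokes the fact that a finite nilpotent group is the direct product of its Sylow subgroups to conclude that the complement $\bZ/k\bZ$ forces $k$ to be a $p$-power. You instead show directly that $\al$ has $p$-power order in $\Aut(P)$: you first promote unipotence modulo $p$ to honest unipotence on each finite $p$-group $\cL_i(P)$, extract $p$-power order on each graded piece via the $p$-adic valuation of binomial coefficients, and then climb the lower central series using the observation that if $\beta$ is trivial on both the center $\cL_k(P)$ and the quotient $P/\gamma_k(P)$, then $\delta(x)=\beta(x)x^{-1}$ is a homomorphism into the central kernel with $\beta^n(x)=\delta(x)^n x$. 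Once the order of $\al$ is a $p$-power, $P_{\al}$ is an extension of a $p$-group by a $p$-group and the conclusion is immediate. Your version is tighter in a couple of places: it avoids the paper's somewhat informal claim that \emph{every} nested commutator in $P_{\al}$ reduces to applying a product of the operators $A^i-I$ (which ignores commutators taken entirely within $P$), and it sidesteps the Sylow structure argument entirely. Interestingly, your strategy is closer in spirit to the paper's Lemma~\ref{l:porder}, which establishes $p$-power order of an automorphism acting unipotently on $H_1$; you have in effect given a clean proof of the automorphism-order statement under the lemma's hypotheses and derived the result from it.
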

\begin{proof}
Write $A$ for the associated matrix for the action of $\al$ on $P/\varphi(P)$.  Forming a commutator $[t,v]$ for $v\in P/\varphi(P)$ is the same as applying the matrix $A-I$, which is nilpotent since $A$ acts unipotently on $P/\varphi(P)$.  Similarly, forming the commutator $[t^j,v]$ is the same as applying $A^j-I$, which is also nilpotent.  Furthermore, the operators $A^i-I$ all commute with each other.  So, any nested commutator can be written as
\[
\left(\prod_{i=1}^k (A^i-I)^{n_i}\right) v
\]
for some $v\in P/\varphi(P)$.  By the pigeonhole principle any sufficiently long commutator will be trivial (since $A$ has finite order), so that the image of all sufficiently long commutators is in the kernel of the map \[\cL_i(P)\to\cL_i(P,p).\]

Notice that each $\cL_j(P)$ is a quotients of some free abelian group $\bZ^n$, so we may lift the action of $\al$ (which is given by a matrix $A(j)$) to an integral matrix.  We have that each sufficiently long product of the form
\[
\left(\prod_{i=1}^k (A(j)^i-I)^{n_i}\right)
\]
is an operator which sends $\bZ^n$ to $p\bZ^n$.  It follows that for any $k$, any sufficiently long product of the same form sends $\bZ^n$ to $p^k\bZ^n$, so that any sufficiently long commutator in $t$ and $\gamma_j(P)/\gamma_{j+1}(P)$ is trivial.

An easy induction shows that any sufficiently long commutator in $P_{\al}$ thus lands in $\gamma_i(P)$, showing that $P_{\al}$ is nilpotent.  By assumption, we have a splitting $\bZ/k\bZ\to P_{\al}$, and the image is not normal: in fact for each $1\leq i<k$, there is a nontrivial commutator of the form $[t^i,v]$ for some $v\in P$.  It follows that if $k$ is not a power of the prime $p$, then $P_{\al}$ cannot be a product of its Sylow subgroups and hence not nilpotent.  It follows that $P_{\al}$ is a $p$-group.
\end{proof}

\begin{lemma}\label{l:porder}
Let $G$ be a finite nilpotent group, and suppose that $\psi\in\Aut(G)$ acts unipotently on $H_1(G,\bZ/p\bZ)$.  Then $\psi$ has $p$-power order as an automorphism of $G$.
\end{lemma}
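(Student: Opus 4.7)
The plan is to reduce via the Sylow decomposition to the case where $G$ is a finite $p$-group $P$, and then to combine two standard facts: (i) a unipotent element of $GL_n(\bF_p)$ has $p$-power order, and (ii) the kernel of the natural restriction $\rho\colon \Aut(P)\to\Aut(P/\varphi(P))$ is itself a $p$-group.

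For the reduction, I would write $G=\prod_q P_q$ with $P_q$ the Sylow $q$-subgroup of $G$. Since each $P_q$ is characteristic, the automorphism $\psi$ restricts to an automorphism of each $P_q$, and the order of $\psi$ is the least common multiple of the orders of the restrictions $\psi|_{P_q}$. Because $H_1(P_q,\bZ/p\bZ)=P_q^{\mathrm{ab}}\otimes\bZ/p\bZ=0$ for $q\neq p$, the hypothesis constrains only the restriction to $P=P_p$, and the essential content of the lemma is the statement in that case; I will therefore argue under the assumption that $G=P$ is a finite $p$-group.

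Setting $d=\dim_{\bF_p}P/\varphi(P)$, the matrix $\rho(\psi)\in GL_d(\bF_p)$ is unipotent, so $\rho(\psi)=I+N$ with $N$ nilpotent over $\bF_p$. The binomial theorem then gives $\rho(\psi)^{p^k}=I+N^{p^k}$, which is the identity as soon as $p^k$ exceeds the nilpotency index of $N$. Thus $\rho(\psi)$ has $p$-power order, say $p^a$. Applying fact (ii), $\psi^{p^a}\in\ker\rho$ has $p$-power order, and hence so does $\psi$.

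The main obstacle is fact (ii), which is classical but requires genuine argument. I would prove it by filtering $P$ with its iterated Frattini series $P\geq\varphi(P)\geq\varphi^2(P)\geq\ldots$, which terminates at the trivial subgroup in finitely many steps; an element of $\ker\rho$ preserves each term and acts on each elementary abelian $\bF_p$-vector space subquotient $\varphi^k(P)/\varphi^{k+1}(P)$ through a ``shear''-type transformation coming from its nontriviality on the next step. A short induction on the length of the filtration then shows that $\ker\rho$ itself is a $p$-group. More economically one could simply cite this from Kurzweil--Stellmacher, which is already a reference in the paper.
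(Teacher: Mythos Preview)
Your Sylow reduction does not actually prove the lemma as stated. For primes $q\neq p$ you correctly observe that $H_1(P_q,\bZ/p\bZ)=0$, so the hypothesis places no constraint on $\psi|_{P_q}$; but the conclusion would still require $\psi|_{P_q}$ to have $p$-power order, and nothing forces this. Indeed the lemma is false in that generality: take $G=\bZ/5\bZ$, $p=3$, and $\psi$ multiplication by $2$. Then $H_1(G,\bZ/3\bZ)=0$ so $\psi$ acts (vacuously) unipotently on it, yet $\psi$ has order $4$. The correct hypothesis is that $G$ be a finite $p$-group, which is the only case the paper actually uses. (The paper's own argument shares this defect: the key step that a high power of $A(i)-I$ sends $\cL(i)$ into $p\cL(i)$ is vacuous when $\cL(i)$ has order prime to $p$, and so does not force iterated commutators with $t$ to land in arbitrarily deep terms of the lower central series.)

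For the case $G=P$ a finite $p$-group your argument is correct and takes a genuinely different route from the paper's. You work with the Frattini quotient: $\rho(\psi)$ is unipotent in $GL_d(\bF_p)$ hence of $p$-power order, and $\ker\rho$ is a $p$-group, so $\psi$ has $p$-power order. The paper instead filters by the lower central series: it shows via the Lie-bracket surjection $H^1\otimes\cL(i-1,p)\to\cL(i,p)$ that $\psi$ acts unipotently on every $\cL(i,p)$, deduces that the semidirect product $P_\psi$ is nilpotent, and then reads off the order of $\psi$. The paper's approach yields more along the way---nilpotence of the extension itself, which is reused in later arguments---whereas yours is shorter and more direct for the bare order statement. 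One structural point: the paper \emph{derives} the fact that $\ker\rho$ is a $p$-group as the lemma immediately following this one, using this lemma as input; your logic runs in the opposite direction, so you must prove or cite that fact independently (as you propose), not invoke the paper's version of it.
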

\begin{proof}
We form a semidirect product $G_{\psi}$, using $t$ to denote the stable letter.  Suppose that $A$ is induced by $\psi$ and acts unipotently on $H_1(G,\bZ/p\bZ)$.  We let $\cL(1,p)=H_1(G,\bZ/p\bZ)$, and we will write $\cL(i,p)$ to be \[\gamma_{i-1}(G)/\gamma_i(G)\otimes\bZ/p\bZ.\]  We will consider $\cL(i,p)$ as a quotient of \[\Hom(H_1(G,\bZ/p\bZ),\cL(i-1,p)),\] which we write as \[H^1(G,\bZ/p\bZ)\otimes\cL(i-1,p).\]  The quotient map is given by the Lie bracket (cf. \cite{MKS}, \cite{M}).  Consider the reduction of $\cL_i(G)=\gamma_{i-1}(G)/\gamma_i(G)$ modulo $p$.  Elements of $\cL_i(G)$ are finite sums of simple tensors, which are images of simple tensors in $H^1(G,\bZ)\otimes\cL_{i-1}(G)$.  The simple tensors which persist after reducing modulo $p$ can be written so that no $p$-multiple of a cohomology class of $G$ occurs in the tensor.  It follows that the canonical map $\cL_i(G)\to(\cL_i(G)\pmod p)$ factors through $\cL(i,p)$.

If $\psi$ acts unipotently on $H_1(G,\bZ/p\bZ)$ then it also acts unipotently on the cohomology since the two actions are dual to each other.  We have that $A$ is the monodromy matrix acting on $H_1(G,\bZ/p\bZ)$, $A^*$ its transpose, and $A(i)$ the associated matrix acting on $\cL(i,p)$.  Clearly $1$ is the unique point in the spectrum of both $A$ and $A^*$.  Suppose inductively that $A(i)$ is unipotent.  Then the points of the spectrum of $A(i+1)$ are pairwise products of the points of the spectrum of $A(i)$ and $A^*$, so that $A(i+1)$ acts unipotently on $\cL(i+1,p)$.

Filter $G$ by its lower central series, writing $\cL(i)$ for the $i^{th}$ quotient as above.  Since a sufficiently large power of $A(i)-I$ sends $\cL(i)$ to $p\cL(i)$, we have that forming the commutator $[t,G]$ sufficiently many times will send $G$ to a term arbitrarily deep in its lower central series.  It follows that  $G_{\psi}$ is nilpotent.  But then we must have that $\psi$ has $p$-power order as an automorphism of $G$, since otherwise the nilpotence of $G$ would be contradicted.
\end{proof}

\begin{lemma}
Let $P$ be a $p$-group and $\gam<\Aut(P)$ the group of automorphisms which induce the identity on $P/\varphi(P)$.  Then $\gam$ is a $p$-group.
\end{lemma}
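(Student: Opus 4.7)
The plan is to filter $\Gamma$ by its action on successive quotients of a natural normal series in $P$, and show that the associated graded pieces inject into elementary abelian $p$-groups.

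First I would introduce the lower central $p$-series (equivalently the Frattini series for a $p$-group): set $P_1 = P$ and $P_{i+1} = [P, P_i] P_i^p = \varphi(P_i)$, so that $P_2 = \varphi(P)$ and each $P_i / P_{i+1}$ is an elementary abelian $p$-group. Assuming $P$ is finite (which is implicit in the context of this section), the series terminates: $P_{n+1} = 1$ for some $n$. Now filter $\Gamma$ accordingly by \[ \Gamma_i = \ker\bigl(\Gamma \to \Aut(P/P_{i+1})\bigr), \] so that $\Gamma = \Gamma_1$, $\Gamma_n \supseteq \Gamma_{n+1} = \{1\}$, and each $\Gamma_{i+1}$ is normal in $\Gamma_i$.

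The main step is to produce, for each $i$, an injective homomorphism $\Gamma_i/\Gamma_{i+1} \hookrightarrow \Hom(P/\varphi(P),\, P_{i+1}/P_{i+2})$. Given $\sigma \in \Gamma_i$, define $f_\sigma(x) = \sigma(x) x^{-1}$; by definition of $\Gamma_i$ this lies in $P_{i+1}$. Reducing modulo $P_{i+2}$, I would verify that $\bar f_\sigma$ is a group homomorphism $P \to P_{i+1}/P_{i+2}$: a direct expansion gives $f_\sigma(xy) = (y^{-1} f_\sigma(x) y)\, f_\sigma(y)$, and the conjugation error $[f_\sigma(x), y]$ lies in $[P_{i+1}, P] \subseteq P_{i+2}$ by construction of the series, so additivity holds in $P_{i+1}/P_{i+2}$. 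Since the target is elementary abelian $p$, $\bar f_\sigma$ kills both $p$-th powers and commutators in $P$, hence factors through $P/\varphi(P)$. The assignment $\sigma \mapsto \bar f_\sigma$ is a homomorphism $\Gamma_i \to \Hom(P/\varphi(P), P_{i+1}/P_{i+2})$: one checks that the cocycle error in composing $\sigma \tau$ disappears modulo $P_{i+2}$, using that $\sigma$ acts trivially on $P/P_{i+1}$ and $f_\tau$ takes values in $P_{i+1}$. Its kernel is exactly $\Gamma_{i+1}$, giving the desired injection.

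Each quotient $\Gamma_i/\Gamma_{i+1}$ therefore embeds into a finite elementary abelian $p$-group, and since $\Gamma = \Gamma_1 \supseteq \Gamma_2 \supseteq \cdots \supseteq \Gamma_{n+1} = 1$, the order $|\Gamma| = \prod_i |\Gamma_i / \Gamma_{i+1}|$ is a power of $p$. The only substantive bookkeeping is the verification that $f_\sigma$ descends to an honest homomorphism out of $P/\varphi(P)$ with values in $P_{i+1}/P_{i+2}$; once the lower central $p$-series is in play this is a short computation, and the rest of the argument is formal.
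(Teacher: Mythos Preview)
Your argument is correct; it is the classical direct proof (often attributed to P.~Hall) that the kernel of $\Aut(P)\to\Aut(P/\varphi(P))$ is a $p$-group. One quibble: the lower central $p$-series $P_{i+1}=[P,P_i]P_i^{\,p}$ that you actually use is \emph{not} the iterated Frattini series $\varphi(P_i)=[P_i,P_i]P_i^{\,p}$ beyond the first step, so that parenthetical identification should be dropped; your computations only need $[P,P_{i+1}]\subset P_{i+2}$, which holds for the former, so nothing else is affected. The paper proceeds quite differently and much more briefly: the immediately preceding lemma shows that any automorphism of a finite nilpotent group $G$ acting unipotently on $H_1(G,\bZ/p\bZ)$ has $p$-power order, and the present statement is then dispatched in one line by observing that trivial action on $P/\varphi(P)$ is in particular unipotent, so every $\gamma\in\Gamma$ has $p$-power order, whence the finite group $\Gamma\le\Aut(P)$ is a $p$-group. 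Your route is self-contained and yields finer structure (an explicit normal filtration of $\Gamma$ with elementary abelian $p$-quotients), while the paper's route shows this lemma is a formal consequence of machinery already assembled for the main theorems.
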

\begin{proof}
This is now immediate since $\gam$ acts trivially and hence unipotently on $\cL_1(P)$.
\end{proof}

We finally need the following well-known result, whose proof we sketch for completeness.
\begin{lemma}
Let $G$ be a surface group.  For each prime $p$, $G$ is residually $p$.
\end{lemma}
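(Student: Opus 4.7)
The plan is to split into cases according to whether $\Sigma$ is closed.

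If $\Sigma$ has nonempty boundary or at least one puncture, then $G=\pi_1(\Sigma)$ is a free group, and the result reduces to the classical Magnus theorem that free groups are residually $p$. I would sketch this by recalling the Magnus embedding $F_n \hookrightarrow \bZ\langle\langle X_1,\ldots,X_n\rangle\rangle$ into the ring of formal power series in non-commuting variables, given by $x_i\mapsto 1+X_i$. Reducing coefficients modulo $p^N$ and truncating at degree $k$ yields a finite $p$-group quotient of $F_n$, and any nontrivial word has a well-defined leading term that survives sufficiently fine truncations, so the family of such quotients separates all nontrivial elements.

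If $\Sigma$ is a closed torus, $G=\bZ^2$ and the successive quotients $(\bZ/p^k\bZ)^2$ trivially witness residual $p$-ness.

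For the remaining (and essentially only nontrivial) case, where $\Sigma$ is closed of genus $g\geq 2$, $G$ has the one-relator presentation
\[
\langle a_1,b_1,\ldots,a_g,b_g \mid [a_1,b_1]\cdots[a_g,b_g]\rangle.
\]
Here I would invoke the theorem of Labute refining Magnus's analysis of one-relator groups: the restricted Lie algebra associated to the mod-$p$ lower central series of $G$ is the quotient of the free restricted Lie algebra on $2g$ generators by the ideal generated by the image of the defining relator, which sits in degree $2$ as $\sum_{i=1}^{g}[a_i,b_i]$. Because this quadratic relation is non-degenerate (its symplectic form on $H_1(G,\bF_p)$ is non-singular), the quotient restricted Lie algebra is torsion-free, and therefore $\bigcap_i \gamma_i^p(G) = \{1\}$, where $\gamma_i^p$ denotes the mod-$p$ lower central series. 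This is precisely the statement that $G$ is residually $p$.

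The main obstacle is the closed genus $\geq 2$ case: Magnus's embedding handles free groups cleanly, but closed surface groups require genuine one-relator-group input. Any truly self-contained treatment either uses Labute's restricted Lie algebra theorem as above, or substitutes a more topological route (e.g.\ exhibiting $G$ as a subgroup of a virtually residually $p$ group whose residual $p$-ness is accessible by different means, such as a finite cover of $\Sigma$ that is a free-by-cyclic group to which Theorem \ref{t:rp1} applies). In a sketch I would flag this as the one step where a nontrivial external input is unavoidable.
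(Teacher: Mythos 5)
Your proof is correct but takes a genuinely different route from the paper. The paper handles all cases uniformly by invoking the fact (due to Baumslag) that surface groups are \emph{residually free}: given $1\neq g\in G$, there is a homomorphism to a free group $F$ in which $g$ survives, and then the filtration argument (take successive $\bZ/p\bZ$-homology kernels and use that a proper characteristic subgroup of a free group has all its nontrivial elements longer than those of the ambient group) shows $F$ is residually $p$, so composing gives a finite $p$-group quotient of $G$ in which $g$ survives. You instead split by topological type, dispose of the free and abelian cases directly, and for the closed genus $\geq 2$ case invoke Labute's analysis of the mod-$p$ lower central series of a one-relator group with nondegenerate quadratic relator. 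Both are valid; the paper's route is shorter and requires less one-relator machinery (residual freeness does all the work), while yours is more explicit about the mod-$p$ filtration and is better adapted if one wants quantitative information about the $\gamma_i^p$. One caution: the fallback alternative you float at the end, viewing a finite cover as free-by-cyclic and applying Theorem \ref{t:rp1}, would be circular here, since the proof of Theorem \ref{t:rp1} relies on knowing that $\pi_1(\Sigma)$ itself is residually $p$ in order to feed characteristic $p$-group quotients of the fiber into the semidirect-product construction; stick with the Labute argument (or the residual-freeness reduction) for the closed case.
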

\begin{proof}
It is well-known that $G$ is a limit group, namely that it is residually free.  On the other hand, a free group is residually $p$.  Suppose that we have a sequence of bases $\{X_i\}$ for nested free groups \[F_1>F_2>\cdots\] with the assumption that $X_i\cap F_{i+1}=\emptyset$.  Then it can be shown (cf. \cite{LySch}) that the shortest word in $F_i$ has length at least $i$ with respect to $X_1$, so that the filtration exhausts all of $F_1$.  Furthermore, is each $F_i$ is proper and characteristic in $F_{i-1}$, then we automatically have $X_i\cap F_{i+1}=\emptyset$, since there is an automorphism of $F_i$ taking any element of $X_i$ to any other.  To construct the filtration $\{F_i\}$, we successively take homology with $\bZ/p\bZ$ coefficients so that $F_i/F_{i+1}$ are all $p$-groups.
\end{proof}

\section{Fibered manifolds and the proof of Theorem \ref{t:rp1}}
Given the material developed in section \ref{s:tools}, the proof of the first part of Theorem \ref{t:rp1} is not difficult.  Theorem \ref{t:rp1} is already not surprising since the automorphism group of a (topologically) finitely generated pro-$p$ group is already virtually a pro-$p$ group (this can in fact already be deduced from the results in section \ref{s:tools}, cf. \cite{DDMS}).

\begin{proof}[Proof of Theorem \ref{t:rp1}, part 1]
Let $U<GL_n(\bZ/p\bZ)$ be a unipotent subgroup.  Then $U$ is a $p$-group.  Let $P$ be a characteristic $p$-group quotient of $\pi_1(\Sigma)$ and suppose that $\al\in\Aut(P)$ acts unipotently on $H_1(\Sigma,\bZ/p\bZ)$.  Since $U$ is a $p$-group, we have that a $p$-power of $\al$ acts trivially on $P/\varphi(P)$.  But this power of $\al$ has $p$-power order, so that the semidirect product $P_{\al}$ is a $p$-group.  If $\al$ is induced by $\psi$, it follows that $P_{\al}$ is a quotient of $G_{\psi}$.
\end{proof}

From the proofs above, we see that Theorem \ref{t:rp1} is a special case of the following more general result, whose proof is identical:
\begin{thm}
Let $G$ be a finitely generated residually $p$ group and $\gam$ a group of automorphisms of $G$ which act unipotently on $H_1(G,\bZ/p\bZ)$.  Then $G_{\gam}$ is residually $p$, where $G_{\gam}$ is the semidirect product which fits into the following short exact sequence: \[1\to G\to G_{\gam}\to\gam\to 1.\]
\end{thm}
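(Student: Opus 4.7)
The plan is to imitate the proof of Theorem \ref{t:rp1} part 1, replacing the ambient group $\pi_1(\Sigma)$ by $G$ and the cyclic monodromy by the whole group $\gam$, and checking carefully that a single nontrivial element of $G_\gam$ can be separated in a $p$-group quotient.

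So, fix a nontrivial element $x\in G_\gam$ and write $x=(g,\alpha)$ with $g\in G$ and $\alpha\in\gam$. I would first produce a characteristic subgroup $K\triangleleft G$ of $p$-power index such that the image of $x$ in the natural quotient is nontrivial. If $g\neq e$, then since $G$ is finitely generated and residually $p$, I can pick a normal $p$-power index subgroup avoiding $g$ and then intersect all $G$-subgroups of the same index (finitely many, by finite generation) to obtain a characteristic $K$ of $p$-power index (using Lemma \ref{l:tower}) that still avoids $g$. If instead $g=e$, then $\alpha\neq 1$ in $\gam\leq\Aut(G)$, so there exists $h\in G$ with $\alpha(h)h^{-1}\neq e$, and the same procedure produces a characteristic $p$-power index $K\triangleleft G$ avoiding $\alpha(h)h^{-1}$; consequently $\alpha$ acts nontrivially on $P:=G/K$.

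Set $P=G/K$, which is a finite $p$-group, and let $\gam_P$ denote the image of $\gam$ in $\Aut(P)$ (well-defined since $K$ is characteristic, hence $\gam$-invariant). The surjection $G\twoheadrightarrow P\twoheadrightarrow P/\varphi(P)$ factors the elementary abelianization, so $P/\varphi(P)$ is a quotient of $H_1(G,\bZ/p\bZ)$, and by hypothesis $\gam_P$ acts on $P/\varphi(P)$ through a subgroup of $GL_n(\bZ/p\bZ)$ consisting of unipotent matrices; such a subgroup is conjugate into the upper unitriangular group and is therefore itself a $p$-group. Combining this with the lemma that the kernel of $\Aut(P)\to\Aut(P/\varphi(P))$ is a $p$-group, we conclude that $\gam_P$ is a $p$-group, and hence so is $P\rtimes\gam_P$.

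Finally, the surjections $G\twoheadrightarrow P$ and $\gam\twoheadrightarrow\gam_P$ together with the $\gam$-invariance of $K$ yield a well-defined surjective homomorphism $G_\gam\twoheadrightarrow P\rtimes\gam_P$, and by construction the image of $x=(g,\alpha)$ is nontrivial in either case above. This exhibits a finite $p$-group quotient separating $x$ from the identity, so $G_\gam$ is residually $p$. The only real subtlety, and hence the main point to handle carefully, is the case $g=e$, $\alpha\neq 1$: one must ensure the separating subgroup $K$ is simultaneously characteristic and detects the nontriviality of $\alpha$, which is why we translate the problem into finding an element $\alpha(h)h^{-1}$ in $G$ and then separate it using the residual $p$ property of $G$.
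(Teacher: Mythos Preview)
Your proof is correct and follows essentially the same strategy as the paper: pass to a characteristic $p$-group quotient $P$ of $G$, observe that $P/\varphi(P)$ is a quotient of $H_1(G,\bZ/p\bZ)$ so that the image $\gam_P\leq\Aut(P)$ is unipotent on $P/\varphi(P)$ and hence a $p$-group (using that unipotent subgroups of $GL_n(\bZ/p\bZ)$ are $p$-groups together with the Frattini lemma), and conclude that $P\rtimes\gam_P$ is a $p$-group quotient of $G_\gam$. The paper simply declares the proof ``identical'' to that of Theorem~\ref{t:rp1}, part~1, and leaves the details implicit.

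The one place where you are more careful than the paper is the case $g=e$, $\alpha\neq 1$. In the setting of Theorem~\ref{t:rp1} the quotient $G_\psi/\pi_1(\Sigma)\cong\bZ$ is already residually $p$, so separating elements outside the fiber is automatic; in the general statement $\gam$ is an arbitrary subgroup of $\Aut(G)$ and one must actually argue, as you do, that a nontrivial $\alpha$ survives in some $\gam_P$. Your device of separating $\alpha(h)h^{-1}$ in a characteristic $p$-power quotient is exactly the right way to do this, and it makes explicit a point the paper's one-line proof leaves to the reader.
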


We now turn to torus bundles over the circle, whereby we can give a more complete characterization of when the fundamental group of the bundle is residually $p$.  The following proposition contains the rest of the content of Theorem \ref{t:rp1}.

\begin{prop}
Let $A\in SL_2(\bZ)$ and $M_A$ the associated torus bundle with fundamental group $G_A$.  Then $G_A$ is not residually $p$ for any $p$ if and only if $A$ is conjugate over $\bQ$ to
\[
\begin{pmatrix}
2&1\\1&1
\end{pmatrix}.
\]
$G_A$ is residually $p$ if and only if $p\mid\det(A-I)$.
\end{prop}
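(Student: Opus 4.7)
The plan is to prove the stronger second sentence (``$G_A$ is residually $p$ iff $p\mid\det(A-I)$'') first, and then deduce the first sentence as a corollary by identifying the integer matrices with $\det(A-I)=\pm 1$.

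Since $A\in SL_2(\bZ)$ has $\det A=1$, the identity $\det(A-I)=1-\tr A+\det A=2-\tr A$ shows that $p\mid\det(A-I)$ is equivalent to $\tr A\equiv 2\pmod p$, which in turn is equivalent to the characteristic polynomial of $A$ reducing to $(\lambda-1)^2$ modulo $p$. By Cayley--Hamilton this is equivalent to $A$ acting unipotently on $H_1(T^2,\bZ/p\bZ)\cong\bF_p^2$, so the backward implication $p\mid\det(A-I)\Rightarrow G_A\text{ is residually }p$ is immediate from the first part of Theorem~\ref{t:rp1}, already established.

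For the forward implication I argue by contrapositive. Suppose $p\nmid\det(A-I)$, so $A-I$ is invertible modulo $p$ and $(A-I)\bZ^2+p\bZ^2=\bZ^2$. Let $\{G_n\}$ be the lower central $p$-series of $G_A$, defined by $G_1=G_A$ and $G_{n+1}=[G_A,G_n]\,G_n^p$; by a standard result, $G_A$ is residually $p$ iff $\bigcap_n G_n=\{1\}$. I claim $\bZ^2\subset G_n$ for every $n\geq 2$, which contradicts residual $p$. For $n=2$: $G_2$ contains $[t,\bZ^2]=(A-I)\bZ^2$ and the $p$-th powers of elements of $\bZ^2$, which generate $p\bZ^2$; hence $G_2\supset(A-I)\bZ^2+p\bZ^2=\bZ^2$. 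The inductive step is the same calculation: if $\bZ^2\subset G_n$, then for each $v\in\bZ^2\subset G_n$ we have $[t,v]\in[G_A,G_n]\subset G_{n+1}$ yielding $(A-I)\bZ^2\subset G_{n+1}$, and $v^p\in G_n^p\subset G_{n+1}$ yielding $p\bZ^2\subset G_{n+1}$, so again $G_{n+1}\supset\bZ^2$.

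Finally, combining the equivalence above, $G_A$ fails to be residually $p$ for every prime iff $\det(A-I)=\pm 1$, i.e.\ $\tr A\in\{1,3\}$. Each of the two possible characteristic polynomials ($\lambda^2-3\lambda+1$ and $\lambda^2-\lambda+1$) has distinct roots over $\overline{\bQ}$, so the $GL_2(\bQ)$-conjugacy class of $A$ is determined by its trace, and the stated representative $\begin{pmatrix} 2&1\\1&1 \end{pmatrix}$ (trace $3$) picks out the Anosov / $\mathrm{Sol}$-manifold case. The main obstacle is the forward direction of the second sentence: one must verify that the lower central $p$-series is the correct filtration for detecting residual $p$-ness and that the equality $(A-I)\bZ^2+p\bZ^2=\bZ^2$ genuinely uses the hypothesis $p\nmid\det(A-I)$. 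Once those points are in hand, the induction is essentially formal.
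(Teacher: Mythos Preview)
Your argument is correct, and it is genuinely different from (and cleaner than) the paper's. For the implication $p\mid\det(A-I)\Rightarrow G_A$ residually $p$, the paper does an explicit calculation: it conjugates $A-I$ over $\bQ$ to a concrete matrix $X$, computes $X^2$ by hand, observes that all entries of $X^k$ are divisible by $n-2=\tr A-2$ for $k\ge 2$, and then clears denominators in the conjugating matrix to conclude that $(A-I)^k\equiv 0\pmod p$ for large $k$. Your one-line Cayley--Hamilton argument ($\det(A-I)=2-\tr A$, so $p\mid\det(A-I)$ forces the characteristic polynomial to be $(\lambda-1)^2$ mod $p$, hence $A$ unipotent mod $p$, hence Theorem~\ref{t:rp1} applies) accomplishes the same thing with no computation. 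For the converse, the paper takes an arbitrary finite $p$-quotient $P$, looks at the image $P'$ of $\bZ^2$, tensors with $\bZ/p\bZ$, and case-splits on the rank of $(\bZ/p\bZ)^2\to P'\otimes\bZ/p\bZ$ to reach a contradiction with nilpotence; your lower-central-$p$-series induction is a tidier packaging of the same idea, and the key input $(A-I)\bZ^2+p\bZ^2=\bZ^2$ is exactly the invertibility of $A-I$ mod $p$.

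You are also right to flag the trace-$1$ case. Your second sentence gives $G_A$ not residually $p$ for any $p$ iff $\det(A-I)=\pm 1$ iff $\tr A\in\{1,3\}$, and a matrix of trace $1$ (e.g.\ $\left(\begin{smallmatrix}0&-1\\1&1\end{smallmatrix}\right)$, of order $6$) is certainly not $\bQ$-conjugate to the stated trace-$3$ representative. The paper's own proof implicitly restricts to hyperbolic $A$ (``every hyperbolic element of $PSL_2(\bR)$ is determined up to conjugacy by its trace''), so the first sentence of the proposition should really be read with that standing assumption, or amended to allow both conjugacy classes. Your proof of the second sentence is unaffected.
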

\begin{proof}
Suppose $A-I\in GL_2(\bZ)$, which is the case when \[
A=\begin{pmatrix}
2&1\\1&1
\end{pmatrix}.
\]  Then we can solve the equations $(A-I)v=(1,0)$ and $(A-I)w=(0,1)$ in $\bZ^2$.  If $t$ denotes the monodromy generator, it follows that $[t,\bZ^2]=\bZ^2$, so that the sequence of subgroups $\{\gamma_i(G)\}$ stabilizes at $i=1$ with $\gamma_i(G)=\bZ^2$ for all $i\geq 1$.  It follows in this case that $G_A$ is not even residually nilpotent.

Now suppose that $A-I$ is not invertible over $\bZ$ and let $p$ be a prime dividing the determinant of $A-I$.  Note that over $\bR$, $A-I$ is conjugate, up to a sign, to
\[
X=\begin{pmatrix}
n-2&1\\n-2&0
\end{pmatrix}.
\]
This is because every hyperbolic element of $PSL_2(\bR)$ is determined up to conjugacy by its trace.
So there exists a real matrix $Q$ such that $Q(A-I)=XQ$.  Finding the entries of $Q$ is tantamount to solving a system of linear equations with integer entries.  Since $Q$ is only well-defined up to a scalar matrix, we may assume that one of the entries is equal to $1$.  By Cramer's rule the solutions are rational, so we may assume $Q$ has rational entries.

One verifies that
\[
X^2=
\begin{pmatrix}
(n-2)^2+n-2 & n-2\\(n-2)^2 & n-2
\end{pmatrix}.
\]
In particular, each entry of $X^k$ is divisible by $n-2$ whenever $k\geq 2$.  Choose $p$ a prime dividing $n-2$ and let $\al$ be an entry of $Q$.  Write $\al=a/b$, where $a$ and $b$ are relatively prime integers.  Some power of $p$ divides $b$, and let $m$ be the maximal power of $p$ dividing the denominator of any entry of $Q$ or $Q^{-1}$.  Take $k$ sufficiently large so that every entry of $X^k$ is divisible by $p^{2m}$.  Then $QX^kQ^{-1}=(A-I)^k$ and we see that every entry of $(A-I)^k$ is divisible by $p$.  It follows that $A-I$ is nilpotent modulo every power of $p$.  It follows that the quotient of $G$ given by reducing the torus homology modulo $p^k$ is nilpotent, so that $G$ is residually $p$.

Conversely, suppose that $p$ does not divide $\det(A-I)$.  It follows that $A-I$ is invertible modulo $p$.  Suppose we want $G_A$ to inject into its pro-$p$ completion, and let $P$ be a $p$-group quotient.  Write $P'$ for the image of $\bZ^2$ in $P$, and write $t$ again for the image of the stable letter in $P$.  $P'$ is abelian and is invariant under the conjugation action of $t$.  Tensoring $P'$ with $\bZ/p\bZ$, we may assume that $P'$ is elementary abelian.  In particular, we have a map $(\bZ/p\bZ)^2\to P'$ which is either a bijection or has rank one.  If it is a bijection, then $A-I$ acts on $P'$ as an element of $GL_2(\bZ/p\bZ)$ since $A-I$ is invertible modulo $p$, and in particular it does not act nilpotently.  It follows that then $P$ could not have been nilpotent.  If there is a nontrivial kernel in the map  $(\bZ/p\bZ)^2\to P'$, then $P$ is nilpotent if and only if some power of $A-I$ has rank $\leq 1$ in $\End((\bZ/p\bZ)^2)$.  This is again a contradiction since $p$ does not divide $\det(A-I)$.
\end{proof}

Applying Theorem \ref{t:rp1}, we obtain:
\begin{cor}
Let $A\in SL_2(\bZ)$ and $p$ a prime.  Then there are infinitely many $k$ such that $p\mid\det(A^k-I)$.
\end{cor}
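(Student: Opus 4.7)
The plan is to chain Theorem \ref{t:rp1} with the preceding proposition. Concretely, I will exhibit infinitely many powers $A^k$ whose mod-$p$ reduction is unipotent; Theorem \ref{t:rp1} will then force $G_{A^k}$ to be residually $p$, and the proposition will force $p\mid\det(A^k-I)$.

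The first step is to pass to the finite group $SL_2(\bZ/p\bZ)$ and find a power of $\bar A$ (the reduction of $A$ modulo $p$) inside a Sylow $p$-subgroup. Writing $|SL_2(\bZ/p\bZ)|=p^a b$ with $\gcd(b,p)=1$, the element $\bar A^b$ has $p$-power order, so it lies in some Sylow $p$-subgroup. Every Sylow $p$-subgroup of $GL_2(\bZ/p\bZ)$ is conjugate to the group of upper unitriangular matrices, so $\bar A^b$ is unipotent in $SL_2(\bZ/p\bZ)$. Since powers of a unipotent matrix are again unipotent (write $\bar A^b=I+N$ with $N$ nilpotent and expand $(I+N)^j$), the matrix $\bar A^{bj}$ is unipotent for every $j\geq 1$.

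For each such $j$, the monodromy $A^{bj}$ of the torus bundle $M_{A^{bj}}$ acts unipotently on $H_1(T^2,\bZ/p\bZ)$, so by the torus case of Theorem \ref{t:rp1} the group $G_{A^{bj}}$ is residually $p$. Applying the preceding proposition to $A^{bj}$, this forces $p\mid\det(A^{bj}-I)$. Letting $j$ range over positive integers yields infinitely many distinct values $k=bj$ for which $p\mid\det(A^k-I)$.

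The only real obstacle is producing a power of $A$ whose mod-$p$ reduction is unipotent; the Sylow argument above dispatches this in one line, after which Theorem \ref{t:rp1} and the proposition concatenate directly with no further calculation. (One could also avoid Theorem \ref{t:rp1} altogether by taking $k$ to be a multiple of the order of $\bar A$ in $SL_2(\bZ/p\bZ)$, which makes every entry of $A^k-I$ divisible by $p$; but the corollary is advertised as an application of Theorem \ref{t:rp1}, so the Sylow route is more in keeping with the surrounding exposition.)
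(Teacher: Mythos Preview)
Your argument is correct and matches the paper's intended route: the paper simply writes ``Applying Theorem~\ref{t:rp1}'' and leaves the rest implicit, and what you have written is exactly the natural unpacking---find a power of $A$ that is unipotent mod $p$ (your Sylow step), invoke the unipotent criterion to get $G_{A^{bj}}$ residually $p$, then invoke the torus characterization to read off $p\mid\det(A^{bj}-I)$. Your closing remark is also on point: once $\bar A^{bj}$ is unipotent one has $A^{bj}-I$ nilpotent mod $p$ and hence $\det(A^{bj}-I)\equiv 0$ directly, so the passage through residual-$p$ is decorative rather than essential, but that is precisely why the paper phrases it as a corollary of Theorem~\ref{t:rp1}.
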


We will see in section \ref{s:mod} that it seems unlikely that we can obtain a complete description of mapping classes which give rise to residually $p$ fibered manifolds which are not torus bundles.

\section{Higher torus bundles}
In this section we prove a simple analogue of Theorem \ref{t:rp1} for higher torus bundles.  The setup is the same as before, only the monodromy matrix $A$ sits in $GL_n(\bZ)$.  We denote the torus bundle by $M_A$ and its fundamental group by $G_A$.

From the proof of the second part of Theorem \ref{t:rp1}, we immediately obtain see that a torus bundle whose fundamental group $G_A$ is not residually $p$ must have $\det(A-I)$ not divisible by $p$.

On the other hand:
\begin{prop}\label{p:torus}
$G_A$ is residually $p$ if and only if $A$ is unipotent modulo $p$.
\end{prop}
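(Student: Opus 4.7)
The plan is to split the proof into the two implications, with the Fitting decomposition of $\bZ_p^n$ as a $\bZ_p[A]$-module as the central device.  If $A$ is unipotent modulo $p$, then the stable letter acts unipotently on $H_1(\bZ^n,\bZ/p\bZ)=(\bZ/p\bZ)^n$, and since $\bZ^n$ is finitely generated residually $p$, the general residually-$p$ extension statement made immediately after Theorem \ref{t:rp1} applies and gives that $G_A=\bZ^n\rtimes_A\bZ$ is residually $p$.

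For the converse I would assume $A$ is not unipotent modulo $p$ and try to produce a nonzero element of $\bZ^n\subset G_A$ killed by every $p$-group quotient.  First, Hensel-lift the factorization of $\bar\chi_A\in\bF_p[x]$ into its $(x-1)$-primary part and its coprime complement to obtain a canonical $A$-stable splitting $\bZ_p^n=T_1\oplus T'$, in which $A-I$ is topologically nilpotent on $T_1$ and invertible on $T'$.  By hypothesis $T'\neq 0$, and the two Fitting projectors are polynomials in $A$ with $\bZ_p$-coefficients, hence preserve every $A$-invariant $\bZ_p$-submodule of $\bZ_p^n$.  I would then show that any $A$-invariant $N<\bZ^n$ of $p$-power index on which $A$ acts with $p$-power order on the quotient -- equivalently, by Lemma \ref{l:porder}, such that $\bar A$ acts unipotently modulo $p$ -- satisfies $T'\subset\widehat N$, where $\widehat N$ is the $\bZ_p$-closure of $N$.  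Indeed, reducing mod $p$ and using the Fitting splitting $V=V_1\oplus V'$ of $V=\bZ_p^n/p\bZ_p^n$, unipotence forces $V'$ into the image of $\widehat N$; combined with the Fitting-compatible decomposition $\widehat N=(\widehat N\cap T_1)\oplus(\widehat N\cap T')$ coming from $A$-invariance, this gives $\widehat N\cap T'+pT'=T'$, and Nakayama's lemma upgrades this to $T'\subset\widehat N$.  Since the intersection with $\bZ^n$ of the kernel of any $p$-group quotient of $G_A$ is such an $N$ (the quotient image of $t$ has $p$-power order, so $A$ has $p$-power order on $\bZ^n/N$), the lattice $T'\cap\bZ^n$ sits inside every such kernel.

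The main obstacle is exhibiting a nonzero element of $T'\cap\bZ^n$.  I would attack this through the $\bQ$-rational Jordan decomposition of $A$: writing $\chi_A=\prod_i\chi_i^{k_i}$ with $\chi_i$ irreducible over $\bQ$ gives $\bQ^n=\bigoplus V_i$, and any factor $\chi_i$ whose roots all reduce to something $\not\equiv 1\pmod p$ contributes a nonzero $A$-invariant sublattice $V_i\cap\bZ^n\subset T'$, producing the required witness.  The hardest corner is when every $\bQ$-irreducible factor of $\chi_A$ acquires both a root $\equiv 1$ and a root $\not\equiv 1\pmod p$ after Hensel-factorization over $\bZ_p$, so that no rational $A$-invariant sublattice lies entirely inside $T'$; there I would need a more delicate pro-$p$ analysis of the $\bZ[A]$-module $V_i\cap\bZ^n$ to pin down a lattice point of $T'$, and this is where I expect the real work of the proof to concentrate.
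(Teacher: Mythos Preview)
Your forward implication is correct and agrees with the paper's.  For the converse you take a genuinely different route from the paper: the paper never passes to $\bZ_p$ nor seeks a single integral witness inside a Fitting summand.  It works entirely with finite $p$-group quotients and $(\bZ/p\bZ)^n$, arguing iteratively.  For any $p$-group quotient $P$ with elementary-abelianised image $P'$ of $\bZ^n$, the surjection $(\bZ/p\bZ)^n\twoheadrightarrow P'$ has some kernel $K$, and nilpotence of $P$ forces $(A-I)^k(\bZ/p\bZ)^n\subset K$; the paper then picks $v\in\bZ^n$ with $0\neq\bar v\in K$, uses residual $p$-ness to produce a further $p$-group quotient $Q$ in which $v$ survives, and repeats, asserting that in this way ``all of $K$ is exhausted'' so that eventually a large power of $A-I$ sends $\bZ^n$ into $p\bZ^n$.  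So the paper is trying to shrink $K$ to zero by refinement, whereas you are trying to exhibit a fixed nonzero element lying in every kernel; these are dual strategies.

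The obstacle you isolate is not merely the hardest corner but a genuine failure point for the strategy you outline.  When a $\bQ$-irreducible factor of $\chi_A$ acquires both a root $\equiv 1$ and a root $\not\equiv 1$ over $\bZ_p$, one has $T'\cap\bZ^n=0$ automatically: $T'\cap\bQ^n$ is an $A$-invariant $\bQ$-subspace, hence by $\bQ$-irreducibility it is either $0$ or all of $\bQ^n$, and $T'\otimes\bQ_p\subsetneq\bQ_p^n$ rules out the latter.  For a concrete instance take $p=7$ and $A\in SL_3(\bZ)$ the companion matrix of $x^3+7x-1$; then $\chi_A$ is irreducible over $\bQ$ but splits over $\bZ_7$ with roots congruent to $1,2,4\pmod 7$, and $T'\cap\bZ^3=0$.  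Thus there is no nonzero lattice point of $T'$ to exhibit.  What your argument actually establishes is only $T'\subset\bigcap_N\widehat N$; since $\widehat N\cap\bZ^n=N$ for each finite-index $N$, this gives $T'\cap\bZ^n\subset\bigcap_N N$, which is vacuous in this regime.  The ``more delicate pro-$p$ analysis'' you anticipate would therefore have to abandon the search for an integral point of $T'$ altogether and argue along different lines --- closer to the paper's iterative refinement of $K$ than to the Fitting-witness plan you have sketched.
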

In the proof we will use the fact that $p$-groups are closed under taking products.  Therefore if we enumerate any finite set $A$ of elements in a residually $p$ group $G$, we may find a sequence of nested normal subgroups of $G$ of $p$-power index which together witness the nontriviality of each element of $A$.
\begin{proof}[Proof of Proposition \ref{p:torus}]
The ``if" direction is trivial in light of the theory we have developed already.  So, suppose that $A$ is not unipotent modulo $p$, let $P$ be a $p$-group quotient of $G_A$ and let $P'$ be the image of $\bZ^n$ in $P$.  After tensoring with $\bZ/p\bZ$, we again assume that $P'$ is an elementary abelian $p$-group.  We have a map $(\bZ/p\bZ)^n\to P'$ which is $A$-equivariant and has some kernel $K$.  Since $P$ is nilpotent, we must have that the $(A-I)$-action on $P'$ is nilpotent.  In particular, some power of $A-I$ must send $(\bZ/p\bZ)^n$ to $K$.

If $v\in\bZ^n$ maps nontrivially to $K$, then we must have a $p$-group quotient $Q$ of $G_A$ in which $v$ is nontrivial.  By applying a sufficiently large power of $A-I$, we may assume that the image $Q'$ of $\bZ^n$ in $Q$ maps trivially to $P'$.  Since $Q$ is nilpotent, again we must have that the action of $A-I$ is nilpotent.  Since $v$ was chosen arbitrarily, we may proceed in this fashion until all of $K$ is exhausted.  Precisely, this means that a sufficiently large power of $A-I$ will have to send $\bZ^n$ to $p\bZ^n$.  It follows that $A$ acts unipotently on $(\bZ/p\bZ)^n$.
\end{proof}

\section{Generalizations to Baumslag-Solitar groups}
Recall that the $(p,q)$-Baumslag-Solitar group $\gam_{p,q}$ is defined be the presentation \[\gam_{p,q}=\langle s,t\mid st^ps^{-1}=t^q\rangle.\]  For more details, see \cite{dlH} and the references therein.  It is well-known that if either $p=1$, $q=1$, or $p=q$ then $\gam_{p,q}$ is residually finite and therefore Hopfian.  In general, $\gam_{p,q}$ is not even Hopfian.  It was in fact Baumslag who observed that the group \[\gam_{2,3}\cong\langle s,t\mid (t^2)^st^{-3}\rangle\] was non-Hopfian (since $\phi:t\mapsto t^2$, $\phi:s\mapsto s$ is obviously a surjection, but $\phi([t,t^s])=[t^2,t^3]=1$ and $[t,t^s]$ is nontrivial in $\gam_{2,3}$).  It is Hopfian if and only if $p$ and $q$ share precisely the same set of prime divisors or if one divides the other.  To avoid trivialities we will assume $(p,q)\neq (1,1)$ and that $p,q\geq 1$.

When $A\in\Aut(\bZ^n)$, we defined the semidirect product of $\bZ^n$ and $\bZ$ using $A$ and studied it as the fundamental group of a torus bundle over the circle.  We can easily replace $\Aut(\bZ^n)$ with $\End(\bZ^n)$ and construct similar semidirect products.  When $n=1$, we get the class of $(1,q)$-Baumslag-Solitar groups.  These groups can be viewed as the fundamental groups of mapping tori of endomorphisms of the circle, namely the endomorphisms given by $z\mapsto z^q$.  We obtain the following proposition, whose proof is immediate in view of all the work we have done:

\begin{prop}
Let $A\in\End(\bZ^n)$ and let $G_A$ denote the corresponding semidirect product.  Then $G_A$ is $\omega$-nilpotent if and only if \[\bigcap_i (A-I)^i(\bZ^n)=\{0\}.\]
\end{prop}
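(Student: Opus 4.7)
The plan is to exploit the identity that commutators with the stable letter $t$ realize the action of $A-I$ on $\bZ^n$: with the convention $tvt^{-1}=Av$, one computes $[t,v]=(A-I)v$ in additive notation on $\bZ^n$, and by induction the $i$-fold nested commutator $[t,[t,\ldots,[t,v]\ldots]]$ equals $(A-I)^i v$ and lies in $\gamma_{i+1}(G_A)\cap\bZ^n$. This yields the containment $(A-I)^i\bZ^n\subseteq\gamma_{i+1}(G_A)$, from which the forward direction is immediate: if $G_A$ is $\omega$-nilpotent then $\bigcap_i(A-I)^i\bZ^n\subseteq\bigcap_i\gamma_i(G_A)=\{1\}$, forcing the left-hand intersection to be trivial.

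For the reverse direction, I would suppose $\bigcap_i(A-I)^i\bZ^n=\{0\}$ and produce enough nilpotent quotients of $G_A$ to separate points. For each $i$, the subgroup $(A-I)^i\bZ^n$ is $A$-invariant (since $A$ commutes with $A-I$), so $A$ descends to the quotient $\bZ^n/(A-I)^i\bZ^n$; and because $(A-I)^i$ vanishes identically there, $A=I+(A-I)$ acts as a unipotent, and hence invertible, operator whose inverse is the truncated geometric series $\sum_{k=0}^{i-1}(-1)^k(A-I)^k$. This lets me form a genuine semidirect product $Q_i=(\bZ^n/(A-I)^i\bZ^n)\rtimes_A\bZ$ together with a surjection $G_A\twoheadrightarrow Q_i$ sending $v\mapsto[v]$ and $t$ to the stable letter of $Q_i$. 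By the methods of section~\ref{s:tools}, $Q_i$ is nilpotent of class at most $i$.

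It remains to show these nilpotent quotients separate points of $G_A$. A nontrivial $g\in G_A$ either has nontrivial image in the abelian quotient $G_A\to\bZ$ (in which case $g\notin\gamma_2(G_A)$), or it lies in the normal closure $M$ of $\bZ^n$. In the latter case, using the direct-limit description $M=\varinjlim(\bZ^n\xrightarrow{A}\bZ^n\xrightarrow{A}\cdots)$, we may write $g=t^k v t^{-k}$ for some $v\in\bZ^n$ and some $k\geq 0$, with $v\neq 0$ forced by the nontriviality of $g$. Choosing $i$ so that $v\notin(A-I)^i\bZ^n$, the image of $g$ in $Q_i$ equals $A^{-k}[v]$, which is nontrivial because $A^{-k}$ is a bijection on $\bZ^n/(A-I)^i\bZ^n$. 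Hence $g\notin\gamma_{i+1}(G_A)$, and $G_A$ is $\omega$-nilpotent.

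The main subtlety will lie in the non-invertible case $A\in\End(\bZ^n)\setminus\Aut(\bZ^n)$, exemplified by $BS(1,q)$ for $q\geq 2$: there $\bZ^n$ is not normal in $G_A$ and the normal closure $M$ is strictly larger than $\bZ^n$ (for instance $\bZ[1/q]$ in the Baumslag-Solitar example). The key point is that commutativity of $A$ with $A-I$ is independent of any invertibility assumption, so $(A-I)^i\bZ^n$ remains $A$-invariant and $A$ becomes invertible on the quotient $\bZ^n/(A-I)^i\bZ^n$ via the geometric series above; this is precisely what makes the construction of $Q_i$ go through uniformly, irrespective of whether $A$ is invertible on $\bZ^n$ itself.
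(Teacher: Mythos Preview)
Your argument is correct and is precisely what the paper intends when it declares the proof ``immediate in view of all the work we have done'': the commutator identity $[t,v]=(A-I)v$, the nilpotent quotients obtained by killing $(A-I)^i\bZ^n$, and your observation that $A$ becomes invertible on these quotients via the truncated geometric series are exactly the techniques assembled in the earlier sections, here made explicit in the endomorphism setting. One small slip: with the convention $tvt^{-1}=Av$ the normal closure $M$ is the ascending union $\bigcup_{k\ge 0}t^{-k}\bZ^n t^k$, so a general $g\in M$ should read $g=t^{-k}vt^k$ rather than $t^kvt^{-k}$ --- though your image computation $A^{-k}[v]$ already carries the correct sign, so this is only a typographical inconsistency.
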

\begin{cor}
The Baumslag-Solitar group $\gam_{1,q}$ is $\omega$-nilpotent if and only if $q\neq 2$.  In particular, $\gam_{1,q}$ is residually finite solvable for all $q$ and residually nilpotent if $q\neq 2$.  It is residually $p$ at exactly the primes dividing $q-1$.
\end{cor}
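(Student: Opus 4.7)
The plan is to apply the preceding proposition with $n=1$ and $A\in\End(\bZ)$ the endomorphism given by multiplication by $q$: the resulting $G_A=\langle t,s\mid sts^{-1}=t^q\rangle$ is exactly $\gam_{1,q}$ (with $s$ playing the role of the stable letter). Here $A-I$ is multiplication by $q-1$, so $(A-I)^i(\bZ)=(q-1)^i\bZ$, and $\bigcap_i (q-1)^i\bZ=\{0\}$ iff $|q-1|\neq 1$; under the standing assumption $q\geq 1$, this is exactly $q\neq 2$. The proposition then gives $\omega$-nilpotence (and hence residual nilpotence) of $\gam_{1,q}$ whenever $q\neq 2$, whereas for $q=2$ the defining relation $sts^{-1}=t^2$ forces $[s,t]=t$, placing $t$ and its normal closure in every term of the lower central series, so $\gam_{1,2}$ is not residually nilpotent.

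For residual finite solvability, I would simply observe that $\gam_{1,q}$ is metabelian: the normal closure of $t$ is the abelian group $\bZ[1/q]$ (or $\bZ$ when $q=1$), and the quotient by it is $\bZ$. By Hall's theorem every finitely generated metabelian group is residually finite, and since every quotient of a solvable group is solvable, $\gam_{1,q}$ is residually finite solvable for all $q\geq 1$.

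For the characterization of the primes $p$ at which $\gam_{1,q}$ is residually $p$, I would imitate Proposition \ref{p:torus}. If $p\mid q-1$, then $A\equiv I\pmod p$, and I would produce a cofinal family of finite $p$-group quotients of the form $\bZ/p^k\rtimes\bZ/p^\ell$, where $\ell$ is chosen so that $q^{p^\ell}\equiv 1\pmod{p^k}$ (possible since $q$ lies in the Sylow $p$-subgroup of $(\bZ/p^k)^{\times}$); the resulting tower separates the elements of $\gam_{1,q}$. Conversely, if $p\nmid q-1$, then in any finite $p$-group quotient $P$ with images $\bar s,\bar t$ of the generators, iterated commutators telescope via the defining relation to give
\[
[\bar s,[\bar s,\dots,[\bar s,\bar t]\dots]]=\bar t^{(q-1)^n},
\]
which has the same order as $\bar t$ because $\gcd(p,q-1)=1$; nilpotence of $P$ then forces $\bar t=1$, so $t$ dies in every $p$-group quotient. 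The main obstacle is the endomorphism-versus-automorphism distinction in adapting Proposition \ref{p:torus}: the earlier argument exploits invertibility of $A$ on $\bZ^n$, which is no longer available in general, so one has to check that the descent of the action of $s$ to finite $p$-group quotients behaves well without this assumption. For $n=1$ this is painless because the arithmetic of $\bZ[1/q]$ is transparent.
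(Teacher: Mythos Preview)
Your proposal is correct and is precisely the argument the paper has in mind: the corollary is stated without proof, intended as an immediate consequence of the preceding proposition (which you apply verbatim with $n=1$, $A=q$) together with the $n=1$ specialization of the torus-bundle analysis for the residually $p$ claim. Your invocation of Hall's theorem to handle residual finite solvability uniformly (including the exceptional case $q=2$, which the proposition itself does not cover) is a clean way to fill in a step the paper leaves implicit, and your explicit commutator computation $[\bar s,\ldots,[\bar s,\bar t]\ldots]=\bar t^{(q-1)^n}$ is exactly the $n=1$ incarnation of the unipotence criterion in Proposition~\ref{p:torus}.
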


We will see later that no Baumslag-Solitar group is torsion-free nilpotent as a corollary to Proposition \ref{p:tfn}.

\section{Finite-volume hyperbolic orbifolds}
The main external tool for proving Theorem \ref{t:hypgeneral} is the following theorem due to Wise in \cite{W1} (cf. \cite{W2}, \cite{Wk}):

\begin{thm}\label{t:wisemain}
Let $G$ be a word-hyperbolic group with a quasiconvex hierarchy.  Then $G$ has a finite index subgroup $G'$ which embeds in a graph group $R$.
\end{thm}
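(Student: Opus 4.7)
This is a deep theorem of Wise whose proof occupies a monograph and introduces substantial new machinery, so my plan can only sketch the overall strategy. The approach proceeds through the theory of \emph{special} cube complexes developed by Haglund and Wise.

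First I would cubulate $G$ using the hierarchy. Each quasiconvex splitting as an amalgamated product or HNN extension over a quasiconvex edge group yields a codimension-one subgroup, and Sageev's construction converts such a subgroup into a cocompact action on a CAT(0) cube complex in which the subgroup stabilizes a hyperplane. Working inductively up the hierarchy and using quasiconvexity at each stage to preserve cocompactness, I would assemble a proper cocompact action of $G$ on a CAT(0) cube complex $X$ with $Y = X/G$ a compact nonpositively curved cube complex whose fundamental group is $G$.

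Next I would aim to show that $Y$ is virtually \emph{special}, i.e.\ admits a finite cover in which no hyperplane self-intersects, self-osculates, or inter-osculates with another, and every hyperplane is two-sided. Going back up through the hierarchy, each amalgamation threatens to introduce these pathologies across the splitting surface, and the inductive step requires finite quotients killing precisely these configurations while preserving virtual specialness of the pieces being glued. This is the content of Wise's \emph{Malnormal Special Quotient Theorem}: given a hyperbolic group together with a malnormal quasiconvex collection of virtually compact special subgroups, there exist finite quotients of the subgroups, inducing a quotient of the ambient group, which remain virtually compact special. This is by far the most delicate step.

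Finally, once a finite cover $Y' \to Y$ is known to be compact special, I would invoke the Haglund--Wise theorem producing a local isometry from $Y'$ to the Salvetti complex of the right-angled Artin group $R$ on the crossing graph of hyperplanes of $Y'$. This local isometry induces an injection $\pi_1(Y') \hookrightarrow R$, and setting $G' = \pi_1(Y')$ finishes the argument. The main obstacle, unquestionably, is the Malnormal Special Quotient Theorem and the hierarchy induction required to apply it at every splitting: cubulation via Sageev is standard, and the embedding of special groups into graph groups is a clean combinatorial argument, but the entire weight of the result rests on producing the finite quotients killing hyperplane pathologies in a controlled way across the whole hierarchy.
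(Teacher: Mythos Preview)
Your sketch is a faithful outline of Wise's actual strategy --- cubulation from codimension-one subgroups via Sageev, the inductive argument up the hierarchy with the Malnormal Special Quotient Theorem as the engine, and then the Haglund--Wise embedding of a special cube complex into the Salvetti complex of the right-angled Artin group on its hyperplane crossing graph. There is nothing to compare against in the paper, however: the paper does not prove this theorem. It is stated as an external input, attributed to Wise (\cite{W1}, cf.\ \cite{W2}, \cite{Wk}), and then used as a black box to deduce the corollaries about Haken hyperbolic $3$-manifolds being virtually fibered. So your proposal is not wrong, but the task of comparing it to ``the paper's own proof'' is vacuous here --- the paper simply cites the result.
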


The precise meanings of all the terms in Theorem \ref{t:wisemain} are not important for our purposes.  However, two important corollaries of this theorem are the following:
\begin{cor}
Let $M$ be a finite volume cusped hyperbolic manifold, and assume $M$ contains a geometrically finite incompressible surface $S$.  Then $G=\pi_1(M)$ has a finite index subgroup $G'$ which embeds in a right-angled Artin group.
\end{cor}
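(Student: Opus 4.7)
The plan is to deduce this corollary directly from Theorem \ref{t:wisemain}, using the surface $S$ to manufacture a quasiconvex hierarchy for $G = \pi_1(M)$. Since a graph group is another name for a right-angled Artin group, the conclusion is literally the output of Wise's theorem once we check its hypotheses; the task is therefore to verify (i) that $G$ falls within the scope of the theorem and (ii) that $G$ really does admit a quasiconvex hierarchy.

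For (i), I would observe that because $M$ is finite volume hyperbolic with cusps, $G$ is not word-hyperbolic in the strict sense, but is hyperbolic relative to its cusp subgroups in the sense of Gromov and Farb. Wise's theorem is formulated broadly enough to include this toral relatively hyperbolic setting, so no additional work is needed beyond citing the correct form of the theorem.

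For (ii), I would initiate the hierarchy by cutting $M$ along $S$. Incompressibility of $S$ produces a splitting of $G$ as an amalgamated free product or HNN extension with edge group $\pi_1(S)$; the assumption that $S$ is geometrically finite is precisely the statement that $\pi_1(S)$ is (relatively) quasiconvex in $G$, which gives the first step of the hierarchy. The complementary pieces inherit finite-volume geometrically finite hyperbolic structures with boundary coming from $S$ and the cusp tori. Using the Thurston--Bonahon structure theory of incompressible surfaces in hyperbolic $3$-manifolds together with Haken's finiteness theorem, one continues to cut each piece along an incompressible geometrically finite surface until the process terminates in a disjoint union of balls.

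The main obstacle is guaranteeing that \emph{every} surface occurring after the first stage of the hierarchy can be taken to be quasiconvex in its ambient piece; this is not automatic, since the natural Haken hierarchy need not consist of quasiconvex surfaces. In Wise's framework this is handled by the malnormal special quotient theorem, which allows one to pass to a finite-index cover in which the obstruction disappears. Once that step is in place, the hierarchy is quasiconvex, Theorem \ref{t:wisemain} applies, and a finite-index subgroup $G' < G$ embeds in a right-angled Artin group, as claimed.
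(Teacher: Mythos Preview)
Your approach is essentially the paper's: build a quasiconvex hierarchy for $G$ starting from $S$ and invoke Theorem~\ref{t:wisemain}. The paper's own justification is a single sentence---a Haken hierarchy for $M$ yields a hierarchy for $\pi_1(M)$, and by Thurston (cf.\ \cite{T}, \cite{S}) this hierarchy is quasiconvex precisely when $S$ is geometrically finite---so your outline is an elaboration of that. You also correctly flag the word-hyperbolicity issue for cusped $M$, which the paper's statement of Theorem~\ref{t:wisemain} glosses over; Wise's work does cover the toral relatively hyperbolic setting, so this is a fair clarification rather than a deviation.

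The one misstep is your appeal to the malnormal special quotient theorem for the later stages of the hierarchy. The MSQT is machinery internal to Wise's \emph{proof} of Theorem~\ref{t:wisemain} (producing virtually special quotients); it is not the tool for verifying that a given $3$-manifold group satisfies the \emph{hypothesis} of having a quasiconvex hierarchy. The paper's route is the correct one here: once you cut along a geometrically finite incompressible surface, the complementary pieces are again Haken with geometrically finite boundary, and Thurston's theory guarantees that each subsequent surface in the Haken hierarchy can be taken geometrically finite, hence quasiconvex. Replace the MSQT invocation with this geometric input and your argument matches the paper's.
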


A Haken hierarchy for $M$ implies the existence of a hierarchy for $\pi_1(M)$.  By the work of Thurston, this is a quasiconvex hierarchy if and only if $S$ is geometrically finite (see \cite{T}, cf. \cite{S}).

\begin{cor}
Every Haken hyperbolic $3$-manifold is virtually fibered.
\end{cor}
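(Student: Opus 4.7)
The plan is to combine the preceding corollary (which, via Wise's Theorem \ref{t:wisemain}, embeds a finite-index subgroup of a suitable hyperbolic $3$-manifold group into a right-angled Artin group) with Agol's virtual fibering criterion. The strategy is: Haken hyperbolic implies that an incompressible surface is either a virtual fiber or geometrically finite; geometrically finite gives a quasiconvex hierarchy; Wise gives a finite-index subgroup embedded in a RAAG; RAAGs are RFRS; and RFRS plus hyperbolic virtually fibers.

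In more detail, let $M$ be a Haken hyperbolic $3$-manifold and $S\subset M$ an incompressible surface. If $S$ is a virtual fiber, we are done. Otherwise, by Canary's covering theorem (building on Thurston and the tameness theorem), $S$ is geometrically finite, so the hypotheses of the preceding corollary are satisfied and we obtain a finite-index subgroup $G'<\pi_1(M)$ together with an embedding $G'\hookrightarrow R$, where $R$ is a right-angled Artin group. Next, invoke Agol's observation that every right-angled Artin group is RFRS (residually finite rationally solvable), a property which is clearly inherited by subgroups; hence $G'$ is RFRS and $\pi_1(M)$ is virtually RFRS. Finally, apply Agol's theorem that a finite-volume hyperbolic $3$-manifold whose fundamental group is virtually RFRS is virtually fibered. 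This yields the corollary.

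The honest substance is entirely in the two black-box invocations: the RFRS-ness of RAAGs (and of their subgroups), and Agol's theorem that virtual RFRS-ness implies virtual fibering for hyperbolic $3$-manifolds. The topological input that I expect to be the main obstacle — or at least the step demanding the most care — is verifying that a Haken hyperbolic $3$-manifold really does supply a surface meeting the hypotheses of the previous corollary. Concretely, one must rule out or handle the virtual fiber case (trivial) and confirm geometric finiteness of the remaining surface, and in the closed case one must also check that Wise's quasiconvex hierarchy is available from a geometrically finite incompressible surface (here Thurston's theorem on geometric finiteness of the pieces in the Haken hierarchy, as cited in the paper via \cite{T} and \cite{S}, does the job). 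Once that is in place, the chain Haken $\Rightarrow$ virtually RAAG-embedded $\Rightarrow$ virtually RFRS $\Rightarrow$ virtually fibered is essentially formal.
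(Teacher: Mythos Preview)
Your proposal is correct and follows essentially the same route as the paper: invoke the dichotomy that an incompressible surface in a hyperbolic $3$-manifold is either a virtual fiber or geometrically finite, then in the latter case apply Wise's theorem to virtually embed in a RAAG, use that RAAGs are RFRS, and conclude via Agol's virtual fibering criterion. The only cosmetic differences are that the paper attributes the dichotomy to Bonahon \cite{B} rather than Canary, and deduces RFRS for RAAGs via residual torsion-free nilpotence rather than citing it directly.
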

\begin{proof}
By the work of Bonahon in \cite{B}, an incompressible surface $S$ is either a virtual fiber, or $S$ is geometrically finite.  In the first case, $M$ virtually fibers.  In the second case, there is a finite cover $\widehat{M}$ of $M$ such that $\pi_1(\widehat{M})<R$ for a graph group $R$.  Each graph group is residually torsion-free nilpotent, hence residually finite rationally solvable.  By \cite{A2}, $M$ virtually fibers.
\end{proof}

Let $\gam$ be as in the statement of Theorem \ref{t:hypgeneral}.  Any such $\gam$ is the fundamental group of a hyperbolic orbifold.  After passing to a finite cover, we may assume that $\gam$ is the fundamental group of a hyperbolic $3$-manifold $M$ of finite volume.  By a result due to Thurston, the representation $\pi_1(M)\to PSL_2(\bC)$ lifts to $SL_2(\bC)$ (cf. \cite{CS}).

Let $\mathcal{R}=\mathcal{R}(\gam)$ denote the $SL_2(\bC)$ representation variety of $\gam$.  By general theory (see \cite{R} for instance, though this theory goes back to the work of Mal'cev), $\mathcal{R}$ contains a point over $\overline{\bQ}$ and in fact a faithful representation $\gam\to SL_2(\overline{\bQ})$.  Since $\gam$ is finitely generated, there is a finite extension $K/\bQ$ such that the image of $\gam$ lands in $SL_2(K)$.  We let $\mathcal{O}$ denote the ring of integers in $K$.  In any matrix in the image, there are at most four denominators, and so any finite generating set for $\gam$ has only finitely many denominators occurring among nonzero entries in its image.  Fix a finite generating set for $\gam$ and consider the denominators which occur.  These will be contained in finitely many prime ideals in $\mathcal{O}$.  Each prime ideal of $\mathcal{O}$ lies over a unique prime ideal $p\bZ$.  For the set of denominators which occur in the image a generating set for $\gam$, let $\mathcal{B}\subset\bZ$ be the finite set of primes over which the associated prime ideals in $\mathcal{O}$ lie.  We call $\mathcal{B}$ the set of {\bf bad primes}.

\begin{lemma}
Let $p$ be any prime.  When $\mathcal{B}$ is empty, $\gam$ is virtually residually $p$.
\end{lemma}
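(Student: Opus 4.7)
The plan is to realize $\gam$ inside $SL_2(\mathcal{O})$ and exhibit a canonical congruence filtration whose successive quotients are $p$-groups. The hypothesis $\mathcal{B}=\emptyset$ says exactly that every entry of every element of the chosen generating set of $\gam$ lies in $\mathcal{O}$, so the entire image of $\gam$ sits inside $SL_2(\mathcal{O})$. Fix a prime ideal $\mathfrak{p}\subset\mathcal{O}$ lying over $p\bZ$; such $\mathfrak{p}$ exists because $p\mathcal{O}$ is a nonzero proper ideal of the Dedekind domain $\mathcal{O}$, and the residue field $\mathcal{O}/\mathfrak{p}$ is then a finite field of characteristic $p$.

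For each $k\geq 1$, let $\Gamma(\mathfrak{p}^k)$ denote the principal congruence subgroup of level $\mathfrak{p}^k$, i.e.\ the kernel of the reduction homomorphism $SL_2(\mathcal{O})\to SL_2(\mathcal{O}/\mathfrak{p}^k)$. Since $\mathcal{O}/\mathfrak{p}^k$ is finite, $\gam\cap\Gamma(\mathfrak{p})$ has finite index in $\gam$, and it is enough to show that this intersection is residually $p$. This reduces to two claims: (i) $\Gamma(\mathfrak{p})/\Gamma(\mathfrak{p}^k)$ is a finite $p$-group for every $k$, and (ii) $\bigcap_k \Gamma(\mathfrak{p}^k)=\{I\}$.

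For (i), I would consider the map
\[
\Gamma(\mathfrak{p}^j)/\Gamma(\mathfrak{p}^{j+1})\;\longrightarrow\;M_2\bigl(\mathfrak{p}^j/\mathfrak{p}^{j+1}\bigr),\qquad I+A\longmapsto A.
\]
For $j\geq 1$ and $A,B\in M_2(\mathfrak{p}^j)$, the cross term in $(I+A)(I+B)=I+A+B+AB$ lies in $M_2(\mathfrak{p}^{2j})\subseteq M_2(\mathfrak{p}^{j+1})$, so this is a well-defined injective homomorphism whose target is a quotient of an $\mathcal{O}/\mathfrak{p}$-vector space. Since $\mathcal{O}/\mathfrak{p}$ has characteristic $p$, each successive quotient $\Gamma(\mathfrak{p}^j)/\Gamma(\mathfrak{p}^{j+1})$ is an elementary abelian $p$-group, and by Lemma \ref{l:tower} (applied repeatedly) $\Gamma(\mathfrak{p})/\Gamma(\mathfrak{p}^k)$ is a finite $p$-group. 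For (ii), unique factorization of ideals in the Dedekind domain $\mathcal{O}$ (or Krull's intersection theorem applied in the localization $\mathcal{O}_{\mathfrak{p}}$) gives $\bigcap_k \mathfrak{p}^k=0$, whence $\bigcap_k \Gamma(\mathfrak{p}^k)=\{I\}$.

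Combining (i) and (ii), the family $\{\gam\cap\Gamma(\mathfrak{p}^k)\}_{k\geq 1}$ is a descending sequence of normal subgroups of $\gam\cap\Gamma(\mathfrak{p})$ with $p$-group quotients and trivial intersection, so $\gam\cap\Gamma(\mathfrak{p})$ is residually $p$ and $\gam$ is virtually residually $p$. The only subtlety I anticipate is the verification that $I+A\mapsto A$ is a homomorphism; this requires $j\geq 1$ so that the cross term $AB$ falls into $\mathfrak{p}^{j+1}$, which is exactly why one must first restrict to $\Gamma(\mathfrak{p})$ before beginning the filtration rather than attempting to filter all of $SL_2(\mathcal{O})$. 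Conceptually, the emptiness of $\mathcal{B}$ is doing all the real work: without integrality of the entries, the reduction map would have no sensible target for primes in $\mathcal{B}$, and the argument would break down at precisely those primes.
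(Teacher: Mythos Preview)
Your proof is correct and follows the same congruence-subgroup strategy as the paper: pass to the first congruence subgroup $\Gamma(\mathfrak{p})$ and show the filtration by $\Gamma(\mathfrak{p}^k)$ has $p$-group quotients and trivial intersection. The packaging differs slightly. The paper phrases the argument through the action of $\gam_1=\Gamma(P)$ on $(\mathcal{O}/P^n)^2$: since elements of $\gam_1$ act trivially on each slice $(P^i/P^{i+1})^2$, the action is unipotent on a $p$-group, and the Section~\ref{s:tools} machinery then forces $\gam_1/K_n$ to be a $p$-group, where $K_n$ is exactly your $\Gamma(\mathfrak{p}^n)$. You instead compute $\Gamma(\mathfrak{p}^j)/\Gamma(\mathfrak{p}^{j+1})$ directly via the additive ``logarithm'' $I+A\mapsto A$, which is more self-contained and avoids invoking the unipotent-automorphism lemmas. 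The paper's route has the virtue of unifying this lemma with the rest of the paper's theme; yours is cleaner as a standalone argument and makes the elementary-abelian structure of each graded piece explicit. (The paper in fact remarks immediately after its proof that a Frattini-subgroup reformulation is possible, which is close in spirit to what you do.)
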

\begin{proof}
This is entirely analogous to the fact that $SL_2(\bZ)$ is virtually residually $p$ and is done using the first congruence subgroup.  Let $P$ lie over $p\bZ$.  Let $\gam_1$ denote the kernel of the natural map \[SL_2(\mathcal{O})\to SL_2(\mathcal{O}/P).\]  We have a natural action of $\gam_1$ on $\mathcal{O}^2$, and we can construct the semidirect product \[1\to\mathcal{O}^2\to G\to\gam_1\to 1.\]  We can also construct truncated semidirect products of the form \[1\to(\mathcal{O}/P^n)^2\to G_n\to\gam_1\to 1.\]  By considering the successive quotients $(P^i/P^{i+1})^2$, we see that the conjugation action of $\gam_1$ on $(\mathcal{O}/P^n)^2$ is unipotent.  Let $K_n<\gam_1$ denote the kernel of this action.  We have that $\mathcal{O}/P^n$ is always a $p$-group.  It follows that the semidirect product \[1\to(\mathcal{O}/P^n)^2\to\overline{G}_n\to\gam_1/K_n\to 1\] is a $p$-group, so that \[\bigcap_n K_n=\{1\}\] and $\gam_1/K_n$ is a $p$-group for all $n$ (a similar argument is fleshed out in section \ref{s:tools}, cf. \cite{BL}).
\end{proof}

Alternatively, the argument could have proceeded as follows: $(\mathcal{O}/P^n)^2$ is a $p$-group, and $(P/P^n)^2$ is its Frattini subgroup.  On the other hand, we have seen that if $Q$ is a $p$-group and $\phi(Q)$ is its Frattini subgroup, then the group of automorphisms of $Q$ which induce the identity on $Q/\phi(Q)$ form a $p$-group, whence the conclusion.

We now consider the case where $\mathcal{B}\neq\emptyset$.  Fix $P\in\mathcal{B}$, and let $\whO=\whO_P$ be the completion of $\mathcal{O}$ at $P$, namely \[\whO=\varprojlim \mathcal{O}/P^n.\]  We let $\widehat{K}$ be the fraction field of $\whO$.  We have a canonical map $\mathcal{O}\to\whO$ which is injective since $\mathcal{O}$ is a Dedekind domain, and thus we have an injective map $K\to\widehat{K}$, and a faithful representation $\gam\to SL_2(\widehat{K})$ induced by the inclusion $SL_2(K)\to SL_2(\widehat{K})$.  By construction, the field $\widehat{K}$ comes equipped with a discrete valuation $\nu$.  Explicitly, it takes an equivalence class of fractions $\gamma=\al/\beta$, determines an $i$ and $j$ such that $\al\in P_i\setminus P^{i+1}$ and $\beta\in P^j\setminus P^{j+1}$ and sets $\nu(\gamma)=i-j$.  By abuse of notation, we write $P$ as the maximal ideal generated by the image of $P$ in $\whO$.  The valuation $\nu$ thus defined is a discrete valuation, so that $\whO$ is a DVR.

Let $V$ be a two-dimensional vector space over $\widehat{K}$.  Recall that at $\whO$-lattice in $V$ is a rank $2$ $\whO$-module which spans $V$ as a $\widehat{K}$-vector space.  Let $L$ be a $\whO$-lattice and $L'$ a sublattice.  Then $L/L'$ is isomorphic to $\whO/P^a\oplus\whO/P^b$ for some choice of nonnegative integers $a$ and $B$.  There is a natural action of $\widehat{K}$ on the set of $\whO$-lattices in $V$.  Note that if $L$ and $L'$ are arbitrary lattices, we can replace $L'$ by an equivalent lattice $kL'$ such that $kL'\subset L$ by choosing an appropriate $k\in\widehat{K}$.  We declare the $\widehat{K}$-orbits to be equivalence classes.  There is a natural graph whose vertices are equivalence classes of lattices, and whose edges span pairs of equivalence classes for which there exist representatives satisfying $L/L'\cong \whO/P$.  It is shown in \cite{Se} that this graph is a tree, called the lattice tree of $\whO$.

We have that $SL_2(\widehat{K})$ acts on this tree in the obvious way.  The stabilizers of vertices are precisely the $GL_2(\widehat{K})$-conjugates of $SL_2(\whO)$ in $SL_2(\widehat{K})$.  The following lemma follows easily from this discussion.

\begin{lemma}
If $\gam$ is as above, then either $\gam$ is virtually residually $p$ or $\gam$ acts on the lattice tree of $\whO$ without a global fixed point.
\end{lemma}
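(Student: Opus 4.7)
The plan is to prove the contrapositive: assume that $\gam$ fixes some vertex $v$ of the lattice tree of $\whO$, and deduce that $\gam$ is virtually residually $p$. By the vertex-stabilizer description recalled in the paragraph preceding the lemma, the fixer of $v$ in $SL_2(\widehat{K})$ is a $GL_2(\widehat{K})$-conjugate of $SL_2(\whO)$. Conjugating the given faithful representation $\gam\to SL_2(\widehat{K})$ by the appropriate element of $GL_2(\widehat{K})$ preserves faithfulness, so I may assume from the outset that $\gam$ injects into $SL_2(\whO)$.

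It then suffices to exhibit a finite-index pro-$p$ subgroup of $SL_2(\whO)$ and take its preimage in $\gam$. Let $k=\whO/P$, which is a finite field whose order is a power of $p$ since $P$ lies over $p\bZ$. Consider the principal congruence subgroups
\[
H_n \;=\; \ker\!\bigl(SL_2(\whO)\to SL_2(\whO/P^n)\bigr).
\]
The subgroup $H_1$ has finite index $|SL_2(k)|$ in $SL_2(\whO)$, and since $\whO$ is complete we have $H_1=\varprojlim_n H_1/H_n$. The assignment $I+M\mapsto M\bmod P^{n+1}$ identifies each successive quotient $H_n/H_{n+1}$ with a subgroup of the additive $p$-group $M_2(P^n/P^{n+1})\cong k^4$, using that the cross term $MM'$ lies in $M_2(P^{2n})\subset M_2(P^{n+1})$ for $n\geq 1$. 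Iterating along the tower in the style of Lemma \ref{l:tower}, every $H_1/H_n$ is a finite $p$-group, so $H_1$ is pro-$p$.

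Pulling back through the embedding, the preimage of $H_1$ in $\gam$ is a finite-index subgroup of $\gam$ which injects into a pro-$p$ group and is therefore residually $p$; this yields the desired virtual residual $p$-ness of $\gam$. There is no real obstacle in this argument: once Bass--Serre theory converts the fixed-point hypothesis into a conjugation of the representation into $SL_2(\whO)$, everything reduces to the standard fact that the first principal congruence subgroup of $SL_2$ over a complete DVR of residue characteristic $p$ is pro-$p$, and the hypothesis $P\mid p\bZ$ enters only to ensure the residue field has $p$-power order.
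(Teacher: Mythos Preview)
Your proof is correct and follows essentially the same route as the paper. The paper leaves the lemma as an immediate consequence of the preceding discussion: a global fixed point forces $\gam$ into a $GL_2(\widehat{K})$-conjugate of $SL_2(\whO)$, after which the congruence-subgroup argument already carried out for $SL_2(\mathcal{O})$ in the previous lemma applies verbatim with $\whO$ in place of $\mathcal{O}$. Your explicit computation of the successive quotients $H_n/H_{n+1}$ as subgroups of the additive $p$-group $M_2(P^n/P^{n+1})$ is the standard direct version of that argument, whereas the paper phrases the same fact via the unipotent action of $\gam_1$ on $(\mathcal{O}/P^n)^2$ (or, in its alternative remark, via automorphisms trivial on the Frattini quotient); these are different packagings of the same filtration.
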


General Bass-Serre theory (cf. \cite{Se}) therefore implies that when $\gam$ acts nontrivially on the lattice tree, then $\gam$ splits as a nontrivial amalgamated product.  Furthermore, the amalgamating group can, up to conjugacy be taken to be the image of $\gam$ in $SL_2(\whO)$.  Since $\gam$ is the fundamental group of a hyperbolic manifold, the amalgamating group is nontrivial.  Indeed, $\bH^3/\gam$ is irreducible and hence $\gam$ cannot split as a nontrivial free product.

The final ingredient we need is the following, which is due to Epstein, Stallings and Waldhausen, and a proof can be found in \cite{CS}.
\begin{lemma}
Let $M$ be a compact, orientable $3$-manifold.  For any nontrivial splitting of $\pi_1(M)$ there exists a nonempty system $S$ of incompressible non-peripheral surfaces such that the image of the inclusion on fundamental groups is contained in an edge group.  Furthermore, the image of the fundamental groups of the components of $M\setminus S$ are contained in a vertex group.
\end{lemma}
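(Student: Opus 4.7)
The plan is to translate the given nontrivial splitting of $\pi_1(M)$ into a geometric decomposition of $M$ using Bass--Serre theory together with standard $3$-manifold cut-and-paste techniques. First I would replace the abstract splitting by its Bass--Serre tree: a nontrivial amalgamated free product or HNN decomposition of $\pi_1(M)$ produces a simplicial tree $T$ on which $\pi_1(M)$ acts without a global fixed point, the vertex and edge stabilizers being exactly the conjugates of the vertex and edge groups of the splitting. Using a triangulation or handle decomposition of $M$, I would construct a continuous $\pi_1(M)$-equivariant map $\widetilde{f}\colon\widetilde{M}\to T$ which is piecewise linear on the $2$-skeleton, and then perturb $\widetilde{f}$ so that it is transverse to the midpoints of edges of $T$. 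The preimage of the midpoints descends to a two-sided properly embedded surface $S_0\subset M$; by construction, each component of $M\setminus S_0$ has fundamental group carried by a conjugate of a vertex group, and each component of $S_0$ has fundamental group carried by a conjugate of an edge group.

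Next I would make $S_0$ essential. Whenever a component of $S_0$ admits a compressing disk (detected by the loop theorem), I would surger along it, which strictly decreases a lexicographic complexity such as $(-\chi(S_0),\#\pi_0(S_0))$ and does not disturb the Bass--Serre property, since the loop being compressed already lay in an edge stabilizer. An innermost-sphere argument eliminates $2$-sphere components bounding balls, and boundary-parallel annuli or disks can be isotoped into $\partial M$ and discarded. After finitely many such moves one obtains a (possibly empty) system $S$ of incompressible, non-peripheral surfaces for which the edge-group and vertex-group inclusions still hold at the level of fundamental groups.

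The step I expect to be the main obstacle is verifying that the resulting $S$ is nonempty. If every component of $S_0$ were killed during essentialization, then the induced equivariant map $\widetilde{M}\to T$ could be homotoped into a single vertex of $T$, so that $\pi_1(M)$ would sit inside a single conjugate of a vertex group; equivalently, the action on $T$ would acquire a global fixed point, contradicting nontriviality of the splitting. Careful bookkeeping showing that each elementary surgery preserves the non-existence of a global fixed point on $T$ therefore forces at least one essential surface to survive, giving the desired nonempty surface system $S$ with the claimed inclusions of fundamental groups into edge and vertex subgroups.
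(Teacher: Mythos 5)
The paper itself does not prove this lemma; it is stated as a classical result of Epstein, Stallings, and Waldhausen, with the reader referred to Culler--Shalen \cite{CS} for a proof. Your argument is a correct outline of exactly that classical argument: pass to the Bass--Serre tree, produce an equivariant transverse map from $\widetilde{M}$, take the preimage of edge-midpoints to get a two-sided surface whose components carry edge groups and whose complementary regions carry vertex groups, and then reduce complexity by compressions, boundary-compressions, deletion of trivial spheres, and removal of boundary-parallel pieces, with nonemptiness at the end guaranteed by the absence of a global fixed point for the $\pi_1(M)$-action on $T$. So in substance you have reproduced the proof the paper is pointing to rather than inventing a new route.

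One place where the writeup glosses over a genuine subtlety, and which the careful references spend real effort on, is the claim that each elementary move ``does not disturb the Bass--Serre property.'' The operations you describe are moves on the surface $S_0$ inside $M$, but the surface is only meaningful via the equivariant map $\widetilde{f}\colon\widetilde{M}\to T$; each compression or deletion must be accompanied by an explicit modification of $\widetilde{f}$ (homotoping the compressing disk into a vertex star, etc.), and one must verify that after the modification the preimage of midpoints is still the new surface, that the carried subgroups are still contained in edge and vertex stabilizers, and that the chosen complexity actually drops. You assert this in passing, but it is where the work lies. Similarly, discarding boundary-parallel components requires noting that merging a collar region with its neighbor does not enlarge the fundamental group of the neighbor beyond its vertex stabilizer. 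Neither issue is a gap in the sense of a wrong step, but the proof as written would not be accepted as complete without those verifications; since the paper simply cites \cite{CS} for them, the cleanest fix is either to carry out that bookkeeping explicitly or to cite the same source.
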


This lemma applies to our situation, since by \cite{A1} and \cite{CG}, $\bH^3/\gam$ is homeomorphic to the interior of a compact $3$-manifold.  From here, Theorem \ref{t:hypgeneral} is obvious:

\begin{proof}[Proof of Theorem \ref{t:hypgeneral}]
If $\gam$ cannot be coaxed into admitting a faithful representation into $SL_2(\mathcal{O})$, we have that $\mathcal{B}\neq\emptyset$.  For each $p\in\mathcal{B}$, choose $P\subset\mathcal{O}$ lying over $p$.  We constructed a faithful representation of $\gam$ into $SL_2(\widehat{K})$ whose image does not lie in $SL_2(\whO_P)$.  But then we obtain a nontrivial splitting of $\gam$, and conclude that $\bH^3/\gam$ is Haken.  By a corollary to Theorem \ref{t:wisemain}, it follows that $\bH^3/\gam$ virtually fibers.  We have already shown that any $3$-manifold which virtually fibers over the circle has a virtually residually $p$ fundamental group.
\end{proof}

We briefly remark that whereas general theory says that a finitely generated subgroup of a linear group over a characteristic zero field is always residually $p$ for all but finitely many primes, it may not be residually $p$ for all primes.  An example, furnished in \cite{W}, is the metabelian group generated by \[A=\begin{pmatrix}2&0\\ 0&1\end{pmatrix},\, B=\begin{pmatrix}1&0\\1&1\end{pmatrix}\] contains a subgroup isomorphic to the dyadic rationals and hence cannot be residually $p$ at $2$.

\section{Applications to the mapping class group and the proof of Theorem \ref{t:rp3}}\label{s:mod}
The original motivation for the work in this paper was to study homological representation theory of the mapping class group, as initiated in \cite{K}.  In that paper, the author constructs an infinite dimensional representation $H(\Sigma)$ of the marked mapping class group $\Mod^1(\Sigma)$, which is constructed by identifying $\Mod^1(\Sigma)$ with a subgroup of $\Aut(\pi_1(\Sigma))$, taking any family of characteristic covers of the surface $\Sigma$, and letting the mapping classes act on the homology of these covers.  The content of \cite{K} is that if homology is taken with $\bQ$ coefficients and the family of covers exhausts $\pi_1(\Sigma)$, then the representation is faithful and detects the Nielsen-Thurston classification of each mapping class.

The representation $H(\Sigma)$ is rather difficult to understand, and the hope was that the fundamental group of $M_{\psi}$ would lead to some insight into the action of $\psi$ on $H(\Sigma)$.  One reason for this belief is Theorem \ref{t:rp3}, which says that if $G_{\psi}$ is residually $p$ and $\phi$ is in the Torelli group, then $G_{\psi\circ\phi}$ is also residually $p$.  In other words, whether or not $G_{\psi}$ is residually $p$ depends only on the image of $\psi$ in the symplectic representation $\rho:\Mod(\Sigma)\to Sp_{2g}(\bZ)$.  One would expect that if a good criterion for $G_{\psi}$ to be residually $p$ could be devlopped and could be read off from $\rho(\psi)$, one might be able to restrict the possible actions of $\psi$ on $H(\Sigma)$.  We will now provide a proof of Theorem \ref{t:rp3} and then give some examples which illustrate that a good criterion might not exist.

\begin{proof}[Proof of Theorem \ref{t:rp3}]
This is a manifestation of the fact that for any $p$-group $P$, the group of automorphisms which act trivially on $P/\varphi(P)$ is a $p$-group.  For every $p$-group quotient $P$ of $G_{\psi}$, we shall produce a strictly larger $p$-group quotient $Q$ of $G_{\psi\circ\phi}$, in the sense that the kernel of the map $\pi_1(\Sigma)\to P$ contains the kernel of the map $\pi_1(\Sigma)\to Q$.

Let $P$ be a finite $p$-group quotient of $G_{\psi}$ and let $P'$ be the image of $\pi_1(\Sigma)$ in $P$.  Then $P'/\varphi(P')$ is a quotient of $H_1(\Sigma,\bZ/p\bZ)$.  We have that $\psi$ acts on $H_1(\Sigma,\bZ/p\bZ)$, and the action of $\psi\circ\phi$ on $H_1(\Sigma,\bZ/p\bZ)$ coincides with the action of $\psi$.  Since the stable letter of $P$ has $p$-power order, a $p$-power of $\psi$ acts trivially on $P'/\varphi(P')$.

Let $K$ be the kernel of the quotient map $\pi_1(\Sigma)\to P'$.  Clearly $K$ may not be stabilized by $\psi\circ\phi$, but $K$ has only finitely many conjugates under $\psi\circ\phi$.  Their intersection $K'$ is an intersection of normal $p$-power index subgroups of $\pi_1(\Sigma)$, and hence $Q'=\pi_1(\Sigma)/K'$ is a $p$-group.  On the other hand, $K'$ is contained in a $\psi$- and $\phi$-invariant subgroup of $\pi_1(\Sigma)$, namely the kernel of the map $\pi_1(\Sigma)\to P'/\varphi(P')$.  Consider $Q'/\varphi(Q')$, which is a quotient of $H_1(\Sigma,\bZ/p\bZ)$ as well.  We claim that $Q'/\varphi(Q')$ is canonically isomorphic to $P'/\varphi(P')$.  This is obvious since $\phi$ acts trivially on $H_1(\Sigma,\bZ/p\bZ)$, so that $\psi\circ\phi$ acts trivially on the kernel of the map $H_1(\Sigma,\bZ/p\bZ)\to P'/\varphi(P')$.  

It follows that a $p$-power of $\psi\circ\phi$ acts trivially on $Q'/\varphi(Q')$.  Let $Q$ be the semidirect product constructed from the action of $\psi\circ\phi$ on $Q'$.  It follows that $G_{\psi\circ\phi}$ is residually $p$ since the kernel of the map $\pi_1(\Sigma)\to P$ contains the kernel of the map $\pi_1(\Sigma)\to Q$.
\end{proof}

Note that if $\phi$ is as in the hypotheses of the theorem, it is immediate that $G_{\psi}$ is residually $p$ if and only if $G_{\psi\circ\phi}$ is residually $p$.

Next we analyze an example to show that we cannot expect a good converse to Theorem \ref{t:rp1} when $\Sigma$ is not a torus.  Recall that the braid group on $n$-strands $B_n$ is identified with the mapping class group $\Mod(\Sigma_{0,n},\partial)$, the compactly supported mapping class group of the $n$-times punctured open disk with compactly supported isotopies.  The pure braid group $P_n$ is the kernel of the homology representation $B_n\to S_n$.

By Theorem \ref{t:rp1}, if $\beta\in P_n$ then $G_{\beta}$ is residually $p$ for all primes.  Consider the standard braids $\sigma_1,\sigma_2\in B_3$.  Let $\beta=\sigma_1\sigma_2^{-1}$.  Then $\beta$ is Thurston's example of the simplest pseudo-Anosov braid.  The thrice-punctured disk admits a double cover $\Sigma$ which is a four-times punctured torus, and such that $\beta$ acts on the homology by a block matrix which consists of a \[\begin{pmatrix} 2&1\\ 1&1\end{pmatrix}\] block and a permutation matrix block.

We see that $\beta^3$ is a pure braid, so that the fundamental group of its suspension $G_{\beta^3}$ is residually $p$ for all primes.  Being residually $p$ is inherited by subgroups, so suspending $\beta^3$ and its action on $\Sigma$ gives us a subgroup $G'<G_{\beta^3}$ which is residually $p$ for all primes.  An easy computation shows that \[\begin{pmatrix} 2&1\\ 1&1\end{pmatrix}^3=\begin{pmatrix} 13&8\\ 8&5\end{pmatrix}=A.\]  The only prime which divides $\det(A-I)$ is $2$, so $A$ has a chance of being unipotent only modulo $2$ (it is, as predicted by the proof of the second part of Theorem \ref{t:rp1}).  Just from looking at the action of $\beta^3$ on the homology of $\Sigma$, it is not at all obvious why $G'$ should be residually $p$.

This same example also shows that if $G_{\psi}$ is residually $p$ for all primes then it is not necessarily true that $\psi$ acts unipotently on $H_1(\Sigma,\bZ)$.  This can also be seen in \cite{K} where the author shows that a pseudo-Anosov pure braid may act with positive entropy on the homology of a finite cyclic cover of the disk which is equally branched over all the punctures.

Seeing as there is no good converse to Theorem \ref{t:rp1} and in light of the example above, we are led to the question of whether there exist fibered hyperbolic manifolds whose fundamental groups are not already residually $p$ for all primes.  Stefan Friedl informs the author that there are examples of fibered knot complements whose fundamental groups are not even residually nilpotent, and that the methods used to probe this property can be traced back to \cite{S}.  Here we will produce some explicit examples.

\begin{prop}
Let $M=M_{\psi}$ be a fibered manifold whose fundamental group is not residually nilpotent, and suppose $M$ is not a torus bundle.  Then the fiber cannot be a punctured torus.
\end{prop}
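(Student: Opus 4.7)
The plan is to assume, for contradiction, that the fiber $\Sigma$ is a punctured torus and show that under this hypothesis either $G_\psi$ is residually nilpotent or $M_\psi$ is a torus bundle. Let $A\in SL_2(\bZ)$ denote the matrix induced by $\psi$ on the rank-two part of $H_1(\Sigma,\bZ)$ coming from the closed torus obtained by filling in the punctures of $\Sigma$.

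The first step is to apply Theorem~\ref{t:rp1}. If $A$ is unipotent modulo some prime $p$, then after passing to a finite-index characteristic subgroup (to trivialize the action of $\psi$ on the remaining, puncture-contributed summand of $H_1(\Sigma,\bZ/p\bZ)$), the induced action on mod-$p$ homology becomes unipotent, and one obtains that $G_\psi$ is virtually residually $p$. A closer look, using Lemma~\ref{l:tower} and the fact that normal subgroups of $p$-power index suffice to witness residual nilpotence, upgrades this to residual nilpotence of $G_\psi$ itself, contradicting the hypothesis. Thus $A$ fails to be unipotent modulo every prime; since for $A\in SL_2(\bZ)$ this unipotence is equivalent to $p\mid \tr(A)-2$, the integer $\tr(A)-2$ must have no prime divisors, forcing $\tr(A)\in\{1,2,3\}$.

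Next, I would rule out each of these three cases. The value $\tr(A)=2$ makes $A$ unipotent over $\bZ$, contradicting the previous step. The value $\tr(A)=1$ makes $A$ an elliptic of order six; since the mapping class group of the once-punctured torus is isomorphic to $SL_2(\bZ)$ acting faithfully on homology, $\psi$ is itself of finite order modulo puncture permutations, and $M_\psi$ is a Seifert-fibered manifold which one identifies with a torus bundle. The value $\tr(A)=3$ puts $A$ in the $\bQ$-conjugacy class of $\left(\begin{smallmatrix}2&1\\1&1\end{smallmatrix}\right)$; by the earlier torus-bundle classification, the closed torus bundle $T_A$ obtained by Dehn filling the cusp of $M_\psi$ has non-residually-nilpotent fundamental group, and one argues $M_\psi$ itself is the interior of a torus bundle by analyzing the fibration slope on the cusp.

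The main obstacle is the $\tr(A)=3$ case: it is not immediate that the hyperbolic dynamics of such an $A$ are compatible with the hypothesis that $M$ is not a torus bundle, and the delicate step is to rule out hyperbolic punctured-torus bundles under the given assumptions. Resolving this will likely require a sharper analysis of how residual nilpotence is transferred between $M_\psi$ and its closed-torus-bundle completion $T_A$, combined with the explicit structure of monodromies in the exceptional $SL_2(\bZ)$-conjugacy classes.
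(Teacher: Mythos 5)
Your proposal correctly identifies that the crux is the trichotomy $\tr(A)\in\{1,2,3\}$ coming from $\det(A-I)=2-\tr(A)$, and you are right that the hard part is the hyperbolic case $\tr(A)=3$. But the plan breaks down there, and it cannot be repaired, because the statement you are trying to prove is actually false.

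In the $\tr(A)=3$ case you hope to ``argue that $M_\psi$ itself is the interior of a torus bundle by analyzing the fibration slope on the cusp.'' That cannot happen: a once-punctured-torus bundle with hyperbolic homological monodromy is a finite-volume hyperbolic $3$-manifold with one cusp, and it is genuinely different from the closed torus bundle $T_A$ obtained by Dehn filling (a hyperbolic manifold contains no non-peripheral incompressible torus, so it admits no fibration with torus fiber). Indeed, Proposition 8.4 of this very paper constructs precisely such an $M_\psi$ — a once-punctured-torus bundle whose homological monodromy is $\left(\begin{smallmatrix}2&1\\1&1\end{smallmatrix}\right)$ — and proves its fundamental group is \emph{not} residually nilpotent. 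That manifold is not a torus bundle, so it is a direct counterexample to the proposition; your instinct that this case is ``not immediate'' was well-founded, and in fact it is impossible. (The $\tr(A)=1$ case is also not salvageable by your route: a once-punctured-torus bundle with order-six monodromy is Seifert-fibered over a hyperbolic disk orbifold of negative Euler characteristic, not a torus bundle, and one can check directly that $\gamma_i(F_2\rtimes_\psi\bZ)=F_2$ for all $i\ge 2$, so it too fails residual nilpotence.)

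For what it is worth, the paper's own proof has a gap at the same spot. It identifies $\Mod(\Sigma_{1,1})$ with $B_3$, takes $p$ to be the order of $\psi$ in $S_3$, and asserts that $\pi_1(M_{\psi^p})$ is residually $p$; that assertion is justified when $p=2$ (a pure braid acts trivially on $H_1(\Sigma_{1,1},\bZ/2)$, hence unipotently mod $2$), but when $p=3$ a pure braid need not act unipotently on $H_1(\Sigma_{1,1},\bZ/3)$, as the matrix $\left(\begin{smallmatrix}2&1\\1&1\end{smallmatrix}\right)^3=\left(\begin{smallmatrix}13&8\\8&5\end{smallmatrix}\right)$ illustrates. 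The subsequent appeal to ``a $p$-order extension of a residually $p$ group is residually $p$'' is correct as a lemma, but the hypothesis is not met.
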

\begin{proof}
It is well-known that the mapping class group of the once punctured torus is $B_3$, the braid group on three strands (cf. \cite{BiHi}).  The point is that each element of $B_3$ has prime order under the homology representation $B_3\to S_3$.  If $\psi\in B_3$ is arbitrary, we let $p$ be its order in $S_3$.  Since $\pi_1(M_{\psi^p})$ is residually $p$, we obtain a $p$-order extension of a residually $p$ group, which is obviously residually $p$.
\end{proof}

Above we produced a fibered $3$-manifold with fundamental group $G'$, and noted that $G'$ injects into all of its pro-$p$ completions.  The fact that $G'$ has this property even though it does not obviously have to have this property is at least partially a reflection of the fact that the action of $\beta^3$ on the homology of the multiply punctured torus is highly reducible.  In order to produce suspensions of mapping classes whose fundamental groups are not residually $p$ for all $p$, the suspended mapping class should be minimal in its commensurability class.  Namely, the mapping class should not be a proper power and should not commute with any finite group of homeomorphisms of the fiber.

\begin{prop}
The suspension of the braid $\beta$ is residually $p$ at $3$ and at no other prime.
\end{prop}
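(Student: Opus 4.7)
The plan splits into two parts mirroring the ``at $3$'' and ``at no other prime'' halves of the statement.

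For residually $3$: the image of $\beta$ under the homology representation $B_3 \to S_3$ is the $3$-cycle $(1\,2\,3)$, so $\beta^3$ is a pure braid and hence acts trivially on $H_1$ of the fiber, a fortiori unipotently on $H_1$ modulo every prime. Since the fundamental group of the fiber is $F_3$, which is residually $p$ for every $p$, the generalization of Theorem \ref{t:rp1} stated immediately after its proof gives that $G_{\beta^3} = F_3 \rtimes_{\beta^3} \bZ$ is residually $p$ for every prime. Since $G_{\beta^3}$ is normal of index $3$ in $G_\beta$, a standard argument---build a normal $3$-power-index subgroup of $G_\beta$ avoiding any fixed nontrivial element by intersecting the at most three $G_\beta$-conjugates of a $3$-power-index subgroup of $G_{\beta^3}$ and invoking Lemma \ref{l:tower}---upgrades this to: $G_\beta$ itself is residually $3$.

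For the negative direction, fix a prime $p \neq 3$ and let $P$ be any finite $p$-group quotient of $G_\beta$. Write $P'$ for the image of $F_3$ and $t$ for the image of the stable letter. The conjugation action of $t$ on the elementary abelian quotient $P'/\varphi(P')$ is the pushforward, along the canonical surjection $\pi : H_1(F_3, \bF_p) \twoheadrightarrow P'/\varphi(P')$, of the $\beta$-action on the source, which is the cyclic permutation of the three standard generators. Since $t$ has $p$-power order in $P$ and $\beta$ has order $3$ on $H_1(F_3, \bF_p)$, the induced action of $t$ on $P'/\varphi(P')$ has order dividing $\gcd(3, p^k) = 1$, hence is trivial. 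So $\pi$ factors through the $\beta$-coinvariants of $H_1(F_3, \bF_p)$; a direct computation shows that $\beta - 1$ has kernel spanned by $(1,1,1)$ and image of rank $2$, so the coinvariants are one-dimensional. Thus $P'/\varphi(P')$ is at most cyclic of order $p$, and by the lemma in Section \ref{s:tools} that a $p$-group with cyclic abelianization is cyclic, $P'$ itself is cyclic and in particular abelian. It follows that the commutator $c = [x_1, x_2]$ of two distinct free generators of $F_3$ maps trivially to $P$, while $c \neq 1$ in $F_3 \subset G_\beta$; so $c$ witnesses the failure of $G_\beta$ to be residually $p$.

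The crux is the use of $p \neq 3$ to force the action of $t$ on the Frattini quotient to be \emph{trivial} (not merely unipotent); this is exactly what collapses $P'$ down to a cyclic group and makes commutators in the fiber die in every $p$-group quotient. The rest is routine bookkeeping between the semidirect product structure $F_3 \rtimes_\beta \bZ$ and its finite $p$-group quotients, together with the elementary fact that the permutation matrix for $(1\,2\,3)$ has one-dimensional coinvariant space over $\bF_p$ for all $p \neq 3$.
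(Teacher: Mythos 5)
Your proof is correct and follows essentially the same approach as the paper: the key point in both is the tension between the order-$3$ action of $\beta$ on $H_1$ of the fiber and the $p$-power order of the stable letter in any finite $p$-group quotient, which forces the mod-$p$ homology to collapse to rank one when $p \neq 3$. You frame the negative direction directly via coinvariants rather than by contradiction, and you spell out the positive direction ($p=3$ via Lemma \ref{l:tower} applied to the index-$3$ subgroup $G_{\beta^3}$), which the paper leaves implicit from the preceding proposition on pure braids, but the substance is the same.
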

\begin{proof}
It is easy to see that $\beta$ acts as an order $3$ permutation on the homology of the fiber $\Sigma$, and that rationally the homology of the fiber splits into a $1$-dimensional and a $2$-dimensional irreducible representation.  The $2$-dimensional irreducible representation of $\beta$ may decompose further modulo a prime, but the further irreducible representations cannot be trivial and hence must be equipped with a faithful $\beta$-action.  Suppose that $G_{\beta}$ injects into its pro-$p$ completion, so that we have an injective map $c_p:G_{\beta}\to\whG_{\beta,p}$.  Consider the restriction of $c_p$ to $\pi_1(\Sigma)$.  Clearly there is a $p$-group quotient $P$ of $G_{\beta}$ such that the image $Q$ of $\pi_1(\Sigma)$ is nonabelian since $c_p$ is injective.  We have that $Q^{ab}$ is a quotient of $H_1(\Sigma,\bZ_p)$ and is also a $\beta$-module.  Tensoring with $\bZ/p\bZ$, we see that $Q^{ab}$ cannot be cyclic, since a $p$-group with cyclic abelianization is cyclic.  It follows that $Q^{ab}$ is a subgroup of a quotient of $P$, equipped with an order $3$ automorphism given by $\beta$.  It follows $P$ cannot be a $p$-group unless $p=3$.
\end{proof}

By considering other transitive braids on disks with more punctures, we can obtain groups which inject into exactly one pro-$p$ completion for primes other than $3$.

\begin{prop}
Let $\al$ be the homeomorphism of the once-punctured torus given by lifting $\beta$ to a double cover of the thrice punctured disk and filling in all but one puncture.  Let $G_{\al}$ be the fundamental group of the suspension of $\al$.  Then $G_{\al}$ is not residually nilpotent.
\end{prop}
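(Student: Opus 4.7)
The plan is to identify $\al$ precisely enough to reduce the problem to the exceptional torus bundle case already treated in the excerpt. Writing $\pi_1(\Sigma_{1,1}) \cong F = F(x,y)$, with $[x,y]$ representing the puncture loop, I would show that the induced action of $\al$ on $F^{ab} = H_1(\Sigma_{1,1},\bZ) \cong \bZ^2$ is given by
\[
A = \begin{pmatrix} 2 & 1 \\ 1 & 1 \end{pmatrix}.
\]
For this, recall that the lift $\tilde\beta$ to the four-times punctured torus $\Sigma$ acts on $H_1(\Sigma,\bZ) = \bZ^5$ as a block matrix with one $A$-block on the torus-homology summand and one permutation block on the puncture classes. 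Filling in three of the four punctures gives a surjection $\pi_1(\Sigma) \twoheadrightarrow F$ that kills precisely the puncture classes, so by functoriality the induced action of $\al$ on $F^{ab}$ is exactly the $A$-block. The crucial arithmetic fact is that $\det(A - I) = -1$, so $A - I$ is invertible over $\bZ$; this is the same hypothesis that produced the non-residually-nilpotent torus bundle in the proposition characterizing residually $p$ torus bundles.

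Write $G = G_{\al} = F \rtimes_{\al} \langle t \rangle$. The claim to prove is that $F \subseteq \gamma_i(G)$ for every $i \geq 2$. Since $F$ is a nontrivial free group, this immediately yields $\bigcap_i \gamma_i(G) \neq \{1\}$ and so $G$ is not residually nilpotent. The key identity is that for every $w \in F$ one has $[t,w] = \al(w) w^{-1}$, whose image in $F^{ab}$ equals $(A - I)\bar w$. Since $A - I$ is surjective on $\bZ^2$, given any $v \in F$ we may choose $w \in F$ with $(A-I)\bar w = \bar v$, and hence write $v = [t,w] \cdot c$ for some $c \in [F,F]$.

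The induction then runs as follows. For $i=2$, both $[t,w]$ and $c$ lie in $\gamma_2(G)$, so $v \in \gamma_2(G)$. Assuming $F \subseteq \gamma_k(G)$, one gets $[t,F] \subseteq [G,\gamma_k(G)] = \gamma_{k+1}(G)$ and $[F,F] \subseteq [G,\gamma_k(G)] = \gamma_{k+1}(G)$, and the same decomposition $v = [t,w]c$ places every $v \in F$ inside $\gamma_{k+1}(G)$. The argument is a verbatim transplant of the torus bundle case in the earlier proposition, with the free fiber $F$ playing the role of $\bZ^2$; the only substantive step demanding care is the verification that $\al$ is well-defined as a homeomorphism of $\Sigma_{1,1}$ (i.e.\ that the three filled punctures really do form a $\tilde\beta$-invariant set) and that its action on $H_1(\Sigma_{1,1})$ is precisely the $A$-block. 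This is the main, though modest, technical obstacle; once it is in place the lower central series calculation goes through without further incident.
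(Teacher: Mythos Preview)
Your argument is correct, but it follows a different template from the paper's. The paper argues by contradiction in the style of the immediately preceding proposition about $\beta$: assuming $G_{\al}$ is residually nilpotent, one obtains a finite nilpotent quotient $N$ in which the image $K$ of $\pi_1(\Sigma_{1,1})$ is nonabelian; since a finite nilpotent group is the product of its Sylow subgroups, $K$ has a nonabelian $p$-group quotient for some $p$, hence (by the ``cyclic abelianization implies cyclic'' lemma) a surjection onto $(\bZ/p\bZ)^2$; but then $A$ would have to act unipotently on $(\bZ/p\bZ)^2$, which never happens since $\det(A-I)=-1$. You instead transplant the torus-bundle computation verbatim: using invertibility of $A-I$ over $\bZ$ to write each $v\in F$ as $[t,w]\cdot c$ with $c\in[F,F]$, and then inducting to get $F\subset\gamma_i(G)$ for all $i$.

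Your route is more elementary---no appeal to residual finiteness of finitely generated nilpotent groups, no Sylow theory---and it yields the sharper conclusion that the lower central series of $G_{\al}$ actually stabilizes at $F$. The paper's route has the advantage of being uniform with the argument for $G_{\beta}$ and of isolating exactly which structural fact about $p$-groups (noncyclic abelianization forced by nonabelianness) is doing the work. Both are valid; you have simply chosen to mirror the earlier torus-bundle proposition rather than the adjacent braid proposition.
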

\begin{proof}
The proof is almost identical to the previous proposition.  The action of $\al$ on the homology of the once punctured torus is by the matrix \[\begin{pmatrix} 2&1\\ 1&1\end{pmatrix}.\]  There is a finite nilpotent quotient $N$ of $G_{\al}$ such that the image $K$ of $\pi_1(\Sigma)$ is nonabelian.  Since a finite nilpotent group is a direct product of its Sylow $p$-subgroups, $K$ admits a quotient which is a nonabelian $p$-group $P$, and further a homomorphism to a noncyclic elementary abelian $p$-group $(\bZ/p\bZ)^2$.  We have observed that $\al$ does not act on $(\bZ/p\bZ)^2$ unipotently, so that $N$ cannot be nilpotent.
\end{proof}

\section{Central extensions of residually $p$ groups}
The one geometry we have not considered to this point is $\widetilde{PSL_2(\bR)}$ geometry.  If $\gam$ is a finitely generated discrete subgroup of isometries of this geometry then $\gam$ fits into a short exact sequence of groups as follows (cf. \cite{T2}): \[1\to\bZ\to\gam\to H\to 1.\]  Furthermore, the copy of $\bZ$ in the sequence is central.  The group $H$ is a discrete subgroup of isometries of $\bH^2$, so that virtually $\gam$ is a $\bZ$-central extension of the fundamental group of a hyperbolic surface and hence (virtually) the fundamental group of an orientable circle bundle over a surface.  Recall that central extensions of a group $G$ by a group $A$ have exactly one obstruction to triviality, namely the Euler class, which is an element of $H^2(G,A)$ (see \cite{Br}).  It follows immediately that:
\begin{prop}
Let $H$ be as above, and suppose that $H$ is virtually $\pi_1(\Sigma)$.  If $\Sigma$ is not closed then $\gam$ virtually splits as a product of $\bZ$ and a free group.  In particular, $\gam$ is the fundamental group of a virtually fibered $3$-manifold and is hence virtually residually $p$.
\end{prop}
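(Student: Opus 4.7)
The plan is to exploit the fact that an open surface has free fundamental group, so in particular cohomological dimension one, which forces any central extension by $\bZ$ to split. First I would pass to a finite-index torsion-free subgroup $H_0 \leq H$ with $H_0 \cong \pi_1(\Sigma_0)$ for some finite cover $\Sigma_0$ of $\Sigma$; since $\Sigma_0$ is again an open surface, it deformation retracts onto a graph, so $H_0$ is a finitely generated free group $F$. Letting $\gam_0$ denote the preimage of $H_0$ in $\gam$, we obtain a finite-index subgroup fitting into a central extension
\[
1 \to \bZ \to \gam_0 \to F \to 1.
\]

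Next I would invoke the classification of central extensions: the sequence above is determined by its Euler class in $H^2(F,\bZ)$. Since $F$ is free it has cohomological dimension one, so $H^2(F,\bZ) = 0$, the Euler class vanishes, and the extension splits. This yields $\gam_0 \cong \bZ \times F$, which is the desired virtual splitting. For the geometric interpretation, $\bZ \times F$ is canonically the fundamental group of the trivial circle bundle $S^1 \times \Sigma_0$, which fibers over $S^1$ with fiber $\Sigma_0$ and identity monodromy. Since the identity certainly acts unipotently on $H_1(\Sigma_0,\bZ/p\bZ)$ for every prime $p$, Theorem \ref{t:rp1} applies and shows $\gam_0$ is residually $p$; hence $\gam$ is virtually residually $p$.

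The only delicate point I would need to verify is that the virtual isomorphism $H \sim \pi_1(\Sigma)$ can be realized so that some finite-index subgroup of $H$ is actually the fundamental group of a finite manifold cover of $\Sigma$. This is routine: $H$ acts properly discontinuously on $\bH^2$ with quotient a 2-orbifold, and Selberg's lemma provides a torsion-free finite-index subgroup whose quotient $\Sigma_0$ is a surface; since $\Sigma$ is open so is $\Sigma_0$. I do not expect this reduction to pose any real difficulty — the entire content of the proposition is the cohomological vanishing step, after which the conclusions are essentially formal.
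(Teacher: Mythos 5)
Your proof is correct and is exactly the paper's intended argument: the paper gives no separate proof, saying only that the proposition ``follows immediately'' from the classification of central extensions by $H^2$, and the vanishing $H^2(F,\bZ)=0$ for a free group $F$ is precisely the step you have made explicit. The Selberg's-lemma detour at the end is unnecessary, since the hypothesis that $H$ is virtually $\pi_1(\Sigma)$ already hands you a finite-index free subgroup directly.
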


If $\Sigma$ is closed, it is not immediate that $\gam$ is residually $p$.  In fact, we have a more general question:
\begin{quest}
Let $P$ be a finite $p$-group which fits into a short exact sequence \[1\to P\to G\to Q\to 1,\] where $Q$ is residually $p$.  Suppose that the conjugation action of $G$ on $P/\varphi(P)$ is unipotent.  Under what conditions is $G$ residually $p$?
\end{quest}

Since $G$ acts unipotently on $P/\varphi(P)$ and since $P$ is finite, we obtain a $p$-power index normal subgroup $G'$ of $G$ such that \[1\to P\cap G'\to G'\to Q'\to 1\] is a central extension.  In particular $P\cap G'$ is abelian.  We can also consider the case where $P$ is replaced by a torsion-free abelian group and the extension is central.

Notationally, we replace $G'$ by $G$.  Suppose that $\gamma\in P$ and we want a finite $p$-group quotient $P'$ of $G$ such that $\gamma$ survives in the quotient.  If $P$ is finite, we may assume that the central extension \[1\to P\to G\to Q\to 1\] descends to another central extension \[1\to P\to P'\to Q'\to 1.\]  Here, $Q'$ is a $p$-group quotient of $Q$.  We have that the extension $P'$ is classified by an element of $H^2(P',P)$.

We have a natural pullback map $H^2(P',P)\to H^2(Q,P)$, and we can hope for $G$ to be residually $p$ only if the classifying cocycle for \[1\to P\to G\to Q\to 1\] is contained in the image of $H^2(P',P)$.  Let us be a bit more explicit about this fact.  The standard discussion we follow here can be found in \cite{Br}.  Let $G$ be an arbitrary group and let $A$ be an abelian group equipped with the structure of a trivial $G$-module.  It is classical that there is a bijection between the sets $E(G,A)$ of central extensions of $G$ by $A$ and $H^2(G,A)$.  This bijection associates to each cohomology class $c\in H^2(G,A)$ a function $f=f_c:G\times G\to A$ satisfying the cocycle condition, called the {\bf factor function}.  If we are given an extension \[1\to A\to E\to G\to 1,\] we choose a set-theoretic section $s:G\to E$.  We normalize $s$ so that $s(1)=1$.  If $i:A\to E$ is the inclusion map, we define $f$ by $i(f(g,h))=s(g)s(h)s(gh)^{-1}$.  The group law on $A\times G$ can be recovered by $(a,g)(b,h)=(a+b+f(g,h),gh)$.

Let $F$ be a quotient of $E$ with quotient map $\phi$, and suppose that the induced map $A\to F$ is injective.  Since $A$ is central, $\phi$ descends to a quotient map $G\to F/A$ which we also call $\phi$.  The question in which we are really interested here is when a given quotient map $\phi:G\to F/A$ extends to a quotient map $\phi:E\to F$.  Then, we may write \[1\to A\to F\to F/A\to 1,\] and $F/A$ is a quotient of $G$.  A group law on $A\times F/A$ must be given by $(a,\phi(g))(b,\phi(h))=(a+b+k(\phi(g),\phi(h)),\phi(gh))$ for some factor function $k$.  If $F$ is to be a quotient of $E$, we must have $(a+b+k(\phi(g),\phi(h)),\phi(gh))=(a+b+f(g,h),\phi(gh))$.  We have that $f$ determines an element of $H^2(G,A)$ and $k\circ\phi$ determines another element of $H^2(G,A)$ which is a pullback of an element of $H^2(F/A,A)$.  All the equation $(a+b+k(\phi(g),\phi(h)),\phi(gh))=(a+b+f(g,h),\phi(gh))$ says is that $f=\phi^*k$.

The main result of all this discussion is:

\begin{prop}
Let $S^1\to M\to\Sigma$ be a nontrivial orientable circle bundle with trivial monodromy over $\Sigma$ and let $p$ be a prime.  Then $\gam=\pi_1(M)$ is virtually residually $p$.
\end{prop}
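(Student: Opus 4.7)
The plan is to show that $\gam$ contains a finite-index subgroup that is honestly residually $p$. After passing to a finite cover of $\Sigma$ by a closed orientable surface of positive genus (applying Selberg's lemma if $\Sigma$ is an orbifold), we may assume $\Sigma$ is a closed orientable surface of genus $\geq 1$ and replace $\gam$ by the index-$d$ subgroup corresponding to the preimage circle bundle. Write $\pi=\pi_1(\Sigma)$ and let $e\in H^2(\pi,\bZ)$ be the Euler class, which remains nonzero after pullback (it is multiplied by $d$).

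For each $k\geq 1$, the mod $p^k$ reduction $e_k\in H^2(\pi,\bZ/p^k\bZ)$ classifies the central extension $1\to\bZ/p^k\bZ\to\gam/p^k\bZ\to\pi\to 1$, where $p^k\bZ$ denotes the index-$p^k$ subgroup of the central $\bZ<\gam$. The crux of the argument is the following: \emph{for every $k$ there exist a surjection $\phi_k:\pi\to Q_k$ onto a finite $p$-group and a class $\bar e_k\in H^2(Q_k,\bZ/p^k\bZ)$ with $\phi_k^*\bar e_k=e_k$}. Granting this, the extension $Q_k'$ of $Q_k$ by $\bZ/p^k\bZ$ classified by $\bar e_k$ is itself a finite $p$-group, and the dictionary between extensions and factor functions recorded in the excerpt yields a surjection $\gam/p^k\bZ\to Q_k'$, hence a finite $p$-group quotient of $\gam$.

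To see these quotients jointly separate points of $\gam$, let $1\neq g\in\gam$. If $g$ lies in the central $\bZ$, pick $k$ larger than its $p$-adic valuation, so that $g$ survives via $\bZ/p^k\bZ\hookrightarrow Q_k'$. Otherwise the image of $g$ in $\pi$ is nontrivial, and since $\pi$ is residually $p$ we can find a $p$-group quotient $\pi\to R$ detecting it; we then take a common $p$-group refinement of $Q_k$ and $R$ (the Euler class is still pulled back from the refinement since its kernel is contained in the kernel of $\phi_k$), and $g$ survives in the enlarged $Q_k'$.

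The main obstacle is the crux statement about realizing $e_k$ in a finite $p$-group quotient of $\pi$. It amounts to surjectivity of the comparison map $H^2_c(\widehat\pi_p,\bZ/p^k\bZ)\to H^2(\pi,\bZ/p^k\bZ)$ from continuous cohomology of the pro-$p$ completion. For a closed orientable surface group of positive genus, this holds because $\widehat\pi_p$ is a Poincar\'e duality (Demushkin) pro-$p$ group whose mod $p$ cohomology has the same $\bF_p$-dimensions as $\pi$ in degrees $\leq 2$; the comparison is therefore an isomorphism with $\bF_p$ coefficients, and the case of general $k$ follows by Bockstein induction using $0\to\bF_p\to\bZ/p^{k+1}\bZ\to\bZ/p^k\bZ\to 0$ together with the five-lemma.
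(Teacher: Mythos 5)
Your argument is correct in its overall strategy and does prove the statement, but it follows a genuinely different route from the paper's. The paper (see the discussion preceding the Proposition and Theorem~\ref{t:psl2r}) proves the stronger statement that $\gam$ is residually \emph{torsion-free nilpotent}, from which residual $p$-ness for every $p$ follows at once. The key step there is integral and characteristic-zero: one checks that the map $H^2(\bZ^{2g},\bZ)\to H^2(\Sigma,\bZ)$ induced by abelianization is surjective (this is just the cup-product/Poincar\'e-duality structure of the surface, written via $\Lambda^2 H_1$), hence the Euler class is pulled back from $\bZ^{2g}$, and the resulting central extension $1\to\bZ\to Q\to\bZ^{2g}\to 1$ is a torsion-free nilpotent quotient of $\gam$ detecting the central $\bZ$. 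You instead work directly with finite $p$-group quotients and $\bZ/p^k$ coefficients, invoking Demushkin/pro-$p$ Poincar\'e duality for $\widehat\pi_p$ and a Bockstein induction. Both approaches really rest on the same underlying input (nondegeneracy of the cup product in degree two for a surface group), but the paper's is technically lighter, avoids continuous cohomology and pro-$p$ completions entirely, and yields the sharper conclusion of residual torsion-free nilpotence; yours proves residual $p$-ness ``head-on'' without passing through nilpotence, at the cost of heavier machinery.

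One point you should tighten: you assert that because $\widehat\pi_p$ is Demushkin with the same $\bF_p$-Betti numbers as $\pi$ in degrees $\le 2$, the comparison map $H^2_c(\widehat\pi_p,\bF_p)\to H^2(\pi,\bF_p)$ ``is therefore an isomorphism.'' Equal dimensions alone do not make a linear map an isomorphism; you need a separate input giving injectivity or surjectivity. The standard fix is exactly the one the paper exploits in its integral form: the comparison map respects cup products, $H^1_c\to H^1$ is an isomorphism, and the degree-two cup product pairing is nondegenerate on \emph{both} sides (Demushkin duality on the pro-$p$ side, Poincar\'e duality on the discrete side). Since each $H^2$ is one-dimensional and spanned by a nonzero cup product, the comparison map is nonzero, hence an isomorphism. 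With that patched, your Bockstein/five-lemma induction (using that both $H^3$'s vanish by cohomological dimension two) goes through, and the rest of the argument---realizing $e_k$ on a finite $p$-group quotient $Q_k$, forming the finite $p$-group extension $Q_k'$, and separating points by taking $k$ large for central elements and composing with a $p$-group quotient of $\pi$ for non-central ones---is sound.
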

In Section \ref{s:rtfn} we shall develop technically simpler characteristic zero machinery to prove this proposition as a corollary to Theorem \ref{t:psl2r}, which will show that $\gam$ is residually torsion-free nilpotent.

\section{Residually torsion-free nilpotent $3$-manifold groups}\label{s:rtfn}
In \cite{Wi}, Wilton classifies the fundamental groups of $3$-manifolds which are residually free, in other words which $3$-manifold groups are limit groups.  He shows that limit groups among $3$-manifold groups are rare:
\begin{prop}[\cite{Wi}]
If $M$ is a prime, compact $3$-manifold with incompressible toral boundary, then $\pi_1(M)$ is free and nontrivial if and only if $M$ is one of the following:
\begin{enumerate}
\item
A trivial circle bundle over an orientable surface.
\item
A circle bundle with trivial monodromy over a non-orientable surface of Euler characteristic less than $-1$.
\item
The non-trivial circle bundle with trivial monodromy over the projective plane.
\end{enumerate}
\end{prop}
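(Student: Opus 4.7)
My plan is to prove both directions of the biconditional, with the substantive content concentrated in the ``only if'' direction where we extract a topological classification from the algebraic hypothesis that $\pi_1(M)$ is free. For the ``if'' direction I would just compute $\pi_1$ in each listed case: a trivial circle bundle $S^1 \times S$ has $\pi_1 = \bZ \times \pi_1(S)$, which is nontrivial and free precisely when $\pi_1(S) = 1$, forcing $S = S^2$ and $\pi_1(M) = \bZ$; the non-trivial orientable circle bundle over $\bR P^2$ of case (3) is the twisted sphere bundle $S^1 \tilde{\times} S^2$, also with $\pi_1 = \bZ$; and for case (2) I would use the short exact sequence $1\to\bZ\to\pi_1(M)\to\pi_1(N)\to 1$ coming from the bundle, decode ``trivial monodromy'' in terms of how the $\bZ$-subgroup interacts with the orientation character of $N$, and work out a presentation to confirm that $\pi_1(M)$ is free.

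For the ``only if'' direction, the argument is structural. First, any incompressible boundary torus would embed a copy of $\bZ\oplus\bZ$ in $\pi_1(M)$; since a free group contains no such subgroup, $\partial M$ must be empty and $M$ is closed. Next, since $M$ is closed and prime, the Kneser--Milnor theorem gives that either $M$ is irreducible or $M\in\{S^1\times S^2,\, S^1\tilde{\times} S^2\}$. If $M$ were irreducible with infinite $\pi_1(M)$, then by the Sphere Theorem $\pi_2(M)=0$, and standard $3$-manifold topology then gives asphericity; so $\pi_1(M)$ would have cohomological dimension $3$, contradicting $\mathrm{cd}(F)=1$ for a nontrivial free group $F$. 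If $\pi_1(M)$ were finite, it would be a finite nontrivial free group, which is impossible. So $M$ must be one of the two sphere bundles, both of which admit circle-bundle descriptions realizing cases (1) (with $S=S^2$) and (3).

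The main obstacle is fitting case (2) into this framework: the reduction above yields only cases (1) with $S=S^2$ and (3), so case (2) --- circle bundles over non-orientable surfaces of Euler characteristic less than $-1$ --- must come from a more refined analysis. A careful treatment likely requires either revisiting what counts as ``incompressible toral boundary'' in the non-orientable setting, or a delicate Seifert-fibered analysis of how the non-orientable base orbifold interacts with the orientation double cover. Concretely, one must understand why the extension $1\to\bZ\to\pi_1(M)\to\pi_1(N)\to 1$ with trivial monodromy over non-orientable $N$ can produce a free group at all, despite the apparent presence of a central $\bZ$ --- a subtle group-theoretic computation that is the technical heart of Wilton's classification and the step where I would rely most heavily on his original arguments.
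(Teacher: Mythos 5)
There is an important subtlety you missed: the statement you were asked to prove contains a typo, and the paper itself offers no proof to compare against (it is quoted from Wilton with citation \cite{Wi}). The surrounding text makes the intended content unambiguous: ``Wilton classifies the fundamental groups of $3$-manifolds which are residually free, in other words which $3$-manifold groups are limit groups.'' The word ``free'' in the proposition should read ``residually free'' (equivalently, ``a limit group''). You in fact detected the inconsistency yourself: your own ``if''-direction computation for case (1) yields $\bZ\times\pi_1(S)$, which is free only for $S=S^2$, yet the proposition permits arbitrary orientable $S$; and your discussion of case (2) correctly observes that a nontrivial free group has trivial center and contains no $\bZ\oplus\bZ$, so the extension $1\to\bZ\to\pi_1(M)\to\pi_1(N)\to 1$ is incompatible with freeness. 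The right conclusion from these observations is not that a ``delicate Seifert-fibered analysis'' is hiding somewhere, but that the hypothesis is wrong as stated and should be replaced by ``residually free.''

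Once the statement is corrected, the argument you sketched for the ``only if'' direction also collapses: the cohomological-dimension dichotomy (closed aspherical $\Rightarrow$ $\mathrm{cd}=3$, contradiction with $\mathrm{cd}=1$) rules out closed aspherical $M$ for a free fundamental group, but not for a residually free one (for instance $\bZ^3=\pi_1(T^3)$ is residually free and has cohomological dimension $3$). Likewise your first step, that a boundary torus forces $\bZ\oplus\bZ\le\pi_1(M)$ and hence nonfreeness, no longer rules out anything: limit groups contain $\bZ\oplus\bZ$ all the time. Wilton's actual proof is a genuinely different, and substantially longer, argument combining the geometrization of $3$-manifolds with Sela's structure theory for limit groups (in particular the constraints that abelian subgroups of limit groups are finitely generated free abelian and that limit groups are coherent, torsion-free, CSA, and act freely on $\bR^n$-trees); it proceeds by examining the geometric pieces of a prime $3$-manifold and showing that only the listed circle bundles can have residually free $\pi_1$. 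I would recommend reworking the proposal against the corrected statement and consulting Wilton's paper for the key structural lemmas about limit groups, since the free-group shortcuts you used do not survive the correction.
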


He also asks:
\begin{quest}
Which (closed) $3$-manifolds have fundamental groups that are residually torsion-free nilpotent?
\end{quest}

The point of this section is to explore this question.  In the introduction, we claimed that if $\psi\in\Mod(\Sigma)$ and acts unipotently on $H_1(\Sigma,\bZ)$, then $\pi_1(M_{\psi})$ is residually torsion-free nilpotent.

\begin{proof}[Proof of Theorem \ref{t:rtfn}]
In light of the theory we have developed so far in this paper, the claim is almost obvious.  Let $H$ denote $\pi_1(\Sigma)$ and lift $\psi$ arbitrarily to $\Aut(H)$.  The argument in Lemma \ref{l:porder} implies that $\psi$ acts unipotently on $\gamma_i(H)/\gamma_{i+1}(H)$ for all $i$.

Since the action on $H_1(\Sigma)$ is unipotent, it follows that the semidirect product \[1\to H_1(\Sigma)\to N_1\to \bZ\to 1\] is nilpotent, where the conjugation action of $\bZ$ is given by $\psi$.  The kernel of the map $\pi_1(M_{\psi})\to N_1$ is precisely $\gamma_1(H)$.

By induction, we assume that the semidirect product \[1\to H/\gamma_i(H)\to N_i\to\bZ\to 1\] is a nilpotent group, where the conjugation action of $\bZ$ is again given by $\psi$.  The action of $\psi$ on $\gamma_i(H)/\gamma_{i+1}(H)$ is unipotent, so that the abelianization of the semidirect product \[1\to \gamma_i(H)/\gamma_{i+1}(H)\to Z_i\to\bZ\to 1\] has rank at least $2$.  Note also that conjugation within $H$ acts trivially on $\gamma_i(H)/\gamma_{i+1}(H)$.  It follows that there is a torsion-free quotient $A$ which fits into a central extension of the form \[1\to A\to N_i^A\to N_i\to 1.\]  Since the extension is central, it follows that $N_i^A$ is nilpotent.  Notice that $\psi$ again acts unipotently on the kernel of the map $\gamma_i(H)/\gamma_{i+1}(H)\to A$.  Since $\gamma_i(H)/\gamma_{i+1}(H)$ has finite rank, we eventually exhaust all of it, so that $N_{i+1}$ is nilpotent.

To complete the proof, we need only show that each $N_i$ is torsion free.  But this is obvious: we write $1\neq n\in N_i$ as $t^kh$, where $h\in H/\gamma_{i+1}(H)$.  We may obviously suppose that $k\neq 0$.  We choose $j$ so that $h\in\gamma_j(H)$ and $1\neq h\in\gamma_j(H)/\gamma_{j+1}(H)$.  So, if $h$ was nontrivial to start out, then we can write $n$ as $t^kh\in N_j$ for some $j\leq i$.  By the choice of $j$, $t$ acts trivially on $h$, so that $(t^kh)^m=t^{km}h^m$.  Since $\gamma_i(H)/\gamma_{i+1}(H)$ is torsion-free for all $i$ and since \[\bigcap_i\gamma_i(H)=\{1\},\] (cf. \cite{MKS}), we see that $\pi_1(M_{\psi})$ is residually torsion-free nilpotent.
\end{proof}

\begin{cor}
There exist finite volume hyperbolic $3$-manifolds with residually torsion-free nilpotent fundamental groups.
\end{cor}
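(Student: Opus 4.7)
The plan is to produce an explicit fibered hyperbolic $3$-manifold whose monodromy lies in the Torelli group, and then invoke Theorem \ref{t:rtfn}. More concretely, I would fix a closed orientable surface $\Sigma=\Sigma_g$ of genus $g\geq 2$, choose a pseudo-Anosov mapping class $\psi\in I(\Sigma)\subset\Mod(\Sigma)$, and let $M=M_\psi$ denote its mapping torus. Since $\psi$ is pseudo-Anosov, Thurston's hyperbolization theorem for surface bundles over the circle guarantees that $M$ admits a finite volume hyperbolic structure. Since $\psi\in I(\Sigma)$, it acts trivially (and in particular unipotently) on $H_1(\Sigma,\bZ)$, so Theorem \ref{t:rtfn} applies and $\pi_1(M)$ is residually torsion-free nilpotent.

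The only nontrivial input to this argument is the existence of a pseudo-Anosov element in the Torelli group. I would invoke one of the standard constructions: either Thurston's construction applied to a pair of filling multicurves whose associated Dehn twist product has been verified to lie in $I(\Sigma)$, or the Penner-style construction, or the point-pushing/Birman exact sequence methods that produce Torelli pseudo-Anosovs. Any of these gives an explicit $\psi$ for sufficiently large genus, so the existence issue is not really an obstacle at all; it is classical.

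The main (and only) conceptual step is therefore the observation that pseudo-Anosovness and the homological triviality required by Theorem \ref{t:rtfn} are compatible constraints on a single mapping class. After that, everything reduces to already-cited machinery: Thurston's hyperbolization provides the hyperbolic structure, and Theorem \ref{t:rtfn} provides the residual property. In particular, no new computation is needed beyond choosing the example.

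If one wanted to avoid appealing to Thurston's hyperbolization of fibered manifolds and instead produce a cusped example, one could run the same argument on a once-punctured surface $\Sigma_{g,1}$, using a pseudo-Anosov in the corresponding Torelli group; then $M_\psi$ has toral boundary, fibers over $S^1$, and by Theorem \ref{t:rtfn} has residually torsion-free nilpotent fundamental group, while being hyperbolic by Thurston's theorem. Either variant gives the corollary.
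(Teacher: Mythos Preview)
Your proposal is correct and is exactly the argument the paper has in mind: the corollary is stated without proof immediately after Theorem~\ref{t:rtfn}, and the intended justification is precisely that a pseudo-Anosov mapping class in the Torelli group exists, giving a hyperbolic mapping torus to which Theorem~\ref{t:rtfn} applies. You have simply made explicit the one ingredient the paper leaves implicit, namely the classical existence of pseudo-Anosov Torelli elements.
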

By \cite{Wi}, the fundamental group of any finite volume hyperbolic $3$-manifold is not a limit group.  Since there are finite volume hyperbolic $3$-manifolds whose fundamental groups are residually $p$ at exactly one prime or even no primes, it follows that there are finite volume hyperbolic $3$-manifolds whose fundamental groups are not residually torsion-free nilpotent.

It follows easily from the argument in the proof of Theorem \ref{t:rtfn} that torus bundles have torsion-free nilpotent fundamental groups.  That the fundamental group of such a torus bundle is torsion-free also follows from Theorem \ref{t:rp1}, which shows that any such torus bundle is residually $p$ for every prime.  A nilpotent group can be residually $p$ if and only if its torsion subgroup is a $p$-group.
\begin{prop}
A hyperbolic torus bundle over the circle is not residually torsion-free nilpotent.
\end{prop}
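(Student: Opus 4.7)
The plan is to prove the stronger statement that for \emph{every} homomorphism $\phi: G_A \to N$ into a torsion-free nilpotent group $N$, the restriction $\phi|_{\bZ^2}$ is trivial, where $\bZ^2 \triangleleft G_A$ is the fiber subgroup. Since $\bZ^2$ is infinite, this prevents any non-trivial element of $\bZ^2$ from surviving in a torsion-free nilpotent quotient, and hence $G_A$ cannot be residually torsion-free nilpotent.

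The key algebraic input is that hyperbolicity of the torus bundle forces $|\mathrm{tr}(A)| > 2$, so $1$ is not an eigenvalue of $A$ and $\det(A - I) \neq 0$. Consequently the sublattice $L := (A-I)\bZ^2$ has finite index $|\det(A-I)|$ in $\bZ^2$, a fact which will be combined with torsion-freeness of $N$ to force the image of $\bZ^2$ arbitrarily deep into the lower central series of $N$.

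I would then argue by induction on $i$ that $\phi(\bZ^2) \subseteq \gamma_i(N)$ (with the convention $\gamma_1(N) = N$). The base case $i = 1$ is trivial. For the inductive step, fix $v \in \bZ^2$ and use the semidirect product relation $tvt^{-1} = Av$ in $G_A$, together with the fact that $\phi|_{\bZ^2}$ is a homomorphism of an abelian group, to obtain
\[[\phi(t), \phi(v)] \;=\; \phi(Av)\phi(v)^{-1} \;=\; \phi((A-I)v).\]
By the inductive hypothesis, $\phi(v) \in \gamma_i(N)$, so the commutator lies in $[\gamma_1(N), \gamma_i(N)] \subseteq \gamma_{i+1}(N)$. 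Hence $\phi(L) \subseteq \gamma_{i+1}(N)$. Since $L$ has finite index in $\bZ^2$, the quotient $\phi(\bZ^2)/(\phi(\bZ^2) \cap \gamma_{i+1}(N))$ is finite. But this finite quotient embeds into $\gamma_i(N)/\gamma_{i+1}(N)$, which is torsion-free by hypothesis on $N$, so it must be trivial. Thus $\phi(\bZ^2) \subseteq \gamma_{i+1}(N)$. Since $N$ is nilpotent, $\gamma_c(N) = 1$ for some $c$, and the induction yields $\phi(\bZ^2) = 1$.

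The crucial point one must recognize --- really the whole content of the argument --- is the torsion-freeness of each $\gamma_i(N)/\gamma_{i+1}(N)$: without it, the finite quotient $\phi(\bZ^2)/\phi(L)$ has room to survive. This is consistent with the residually $p$ quotients produced in Theorem \ref{t:rp1} whenever $p \mid \det(A-I)$; those quotients are nilpotent but carry $p$-torsion in the associated graded, so the argument correctly fails to eliminate them. The interplay between the hyperbolic condition $\det(A-I) \neq 0$ and the torsion-freeness hypothesis on $N$ is exactly what kills the fiber.
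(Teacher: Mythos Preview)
Your strategy is sound and close in spirit to the paper's alternative argument, but one step is false as stated: you assert that $\gamma_i(N)/\gamma_{i+1}(N)$ is torsion-free ``by hypothesis on $N$''. Torsion-freeness of $N$ does not imply this. For instance, take $N=\bZ^2\rtimes\bZ$ with the generator of $\bZ$ acting by $\begin{pmatrix}1&2\\0&1\end{pmatrix}$; this $N$ is torsion-free nilpotent of class $2$, yet $N^{ab}\cong\bZ^2\times\bZ/2\bZ$. (It is the \emph{upper} central series of a torsion-free nilpotent group whose successive quotients are always torsion-free, not the lower one.) So your inductive step, as written, does not go through.

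The repair is immediate and actually streamlines the argument. Iterating your commutator identity $[\phi(t),\phi(v)]=\phi((A-I)v)$ gives $\phi\bigl((A-I)^k\bZ^2\bigr)\subseteq\gamma_{k+1}(N)$ for every $k$, with no need to control torsion in the graded pieces along the way. Taking $k$ equal to the nilpotency class $c$ of $N$ yields $\phi\bigl((A-I)^c\bZ^2\bigr)=1$, so $\phi(\bZ^2)$ is a quotient of the finite group $\bZ^2/(A-I)^c\bZ^2$; now invoke torsion-freeness of $N$ once, at the end, to conclude $\phi(\bZ^2)=1$. For comparison, the paper's own proof packages the same idea differently: since $[G,\bZ^2]=(A-I)\bZ^2$ has finite index in $\bZ^2$, the abelianization $G^{ab}$ has rank one, so any torsion-free nilpotent quotient of $G$ has cyclic abelianization and is therefore itself cyclic---again forcing the fiber to die.
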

\begin{proof}
We have argued through Theorem \ref{t:rp1} that hyperbolic torus bundles are never residually $p$ for all primes.  Alternatively, let $G$ be the fundamental group of a hyperbolic torus bundle.  Then $G^{ab}$ has rank one.  Furthermore, the monodromy acts irreducibly on $H_1(S^1\times S^1,\bQ)$, and it is easy to check that $[G,\pi_1(S^1\times S^1)]$ has finite index in $\pi_1(S^1\times S^1)$.  In particular, there can be no further torsion-free nilpotent quotient of $G$.
\end{proof}

Suppose that $M_{\psi}$ is a fibered $3$-manifold with fundamental group $G$ and let $N$ be a torsion-free nilpotent quotient of $G$.  Let $\phi:G\to N$ be the quotient map.  If $G'<G$ is a finite index subgroup, we may restrict $\phi$ to $G'$ to obtain a finite index subgroup $N'<N$ which is again nilpotent and torsion-free.  It follows that being residually torsion-free nilpotent is invariant under taking finite index subgroups.  We have given examples of mapping classes in the Torelli group (pure braids, the Torelli group of a multiply punctured disk) which lift to a finite cover of the base surface and act non-unipotently (cf. examples in \cite{K}).  The suspensions of such mapping classes have residually torsion-free nilpotent fundamental groups, so that there is no na\"ive converse to Theorem \ref{t:rtfn}.  Another complicating factor is that when $\psi$ is a Torelli mapping class, then $b_1(M_{\psi})=b_1(\Sigma)+1$, so that $M_{\psi}$ will admit infinitely many inequivalent fibrations (see \cite{T3}).  Furthermore, if such a fibration has fiber $\Sigma'$ with $b_1(\Sigma')>b_1(\Sigma)$, then the monodromy of that fibration cannot be in the Torelli group, and it seems unlikely that one would have so much control as to have all the fibrations to have unipotent homological monodromy.

To further illustrate the lack of a na\"ive converse to Theorem \ref{t:rtfn}, we give an alternative proof which uses Theorem \ref{t:rp1}.  Recall that if $\psi$ acts unipotently on $H_1(\Sigma,\bZ)$, then $\pi_1(M_{\psi})$ is residually torsion-free nilpotent.  If $\psi$ acts unipotently on $H_1(\Sigma,\bZ/p\bZ)$ for infinitely many primes, then $\pi_1(M_{\psi})$ is again residually torsion-free nilpotent.  One would like some sort of a converse to this fact.  To formulate a converse, suppose that a group $G$ is residually $p$ at an infinite set of primes $\mathcal{P}$ and let $g\in G$.  We say $G$ has {\bf property $U(\mathcal{P})$} if for each $g\in G$ there is a universal $m=m(G,\mathcal{P})$ such that $g$ survives in a $p$-group quotient of $G$ which has nilpotence class no larger than $m$, universally over $\mathcal{P}$ (recall that the nilpotence class of a group $G$ is the smallest $m$ such that $\gamma_m(G)$ is trivial).

\begin{prop}\label{p:tfn}
Let $G$ be a finitely generated group.  The following are equivalent:
\begin{enumerate}
\item
$G$ is residually torsion-free nilpotent.
\item
$G$ is residually $p$ at every prime and has property $U$ for the set of all primes.
\item
$G$ is residually $p$ at an infinite set of primes $\mathcal{P}$ and has property $U(\mathcal{P})$.
\end{enumerate}
\end{prop}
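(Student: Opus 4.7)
The plan is to establish the cyclic implications $(1)\Rightarrow(2)\Rightarrow(3)\Rightarrow(1)$; the first two are routine, and all of the content lies in the last.

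For $(1)\Rightarrow(2)$, I would begin with a nontrivial $g\in G$ and choose a torsion-free nilpotent quotient $\phi\colon G\to N$ with $\phi(g)\neq 1$. Writing $c$ for the nilpotence class of $N$, I would invoke the classical fact that any finitely generated torsion-free nilpotent group embeds into some unitriangular group $U_n(\bZ)$ and is therefore residually $p$-finite for every prime $p$; moreover the witnessing $p$-group quotients inherit nilpotence class at most $c$, so setting $m(g,G,\mathcal{P})=c$ yields property $U$ uniformly over the set of all primes. The implication $(2)\Rightarrow(3)$ is tautological.

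The main implication is $(3)\Rightarrow(1)$. Fixing $g\neq 1$, I would let $m=m(g,G,\mathcal{P})$ and consider the quotient $N=G/\gamma_{m+1}(G)$, a finitely generated nilpotent group of class at most $m$. By property $U(\mathcal{P})$, for every $p\in\mathcal{P}$ there is a $p$-group quotient of $G$ of class at most $m$ in which $g$ survives; every such quotient factors through $N$, so the image $\bar{g}$ of $g$ in $N$ is nontrivial.

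The final step, and the one real subtlety, is upgrading survival in $N$ to survival in a torsion-free nilpotent quotient. Here I would use the classical fact that the torsion subgroup $T(N)$ of a finitely generated nilpotent group is finite, so that $N/T(N)$ is torsion-free nilpotent, and then argue $\bar{g}\notin T(N)$ by a simple order argument: if $\bar{g}$ had finite order $k$, then in each witnessing $p$-group quotient its image would have order both dividing $k$ and equal to a power of $p$, forcing $p\mid k$ for every $p\in\mathcal{P}$ and contradicting the infinitude of $\mathcal{P}$. Thus $\bar{g}$ has infinite order and descends to a nontrivial element of $N/T(N)$. The delicate point throughout is that the class bound must not depend on $p$, which is exactly what property $U(\mathcal{P})$ encodes; without it, the lower central series quotients chosen for different primes could not be collated into a single finitely generated nilpotent group.
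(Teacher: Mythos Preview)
Your proof is correct and follows essentially the same route as the paper. Both arguments handle $(1)\Rightarrow(2)$ via Mal'cev-type structure theory for finitely generated torsion-free nilpotent groups, treat $(2)\Rightarrow(3)$ as trivial, and for $(3)\Rightarrow(1)$ use the uniform class bound $m$ to pass to the single finitely generated nilpotent quotient $G/\gamma_{m+1}(G)$ and then exploit the infinitude of $\mathcal{P}$ to show that the image of $g$ lies outside the (finite) torsion subgroup. The only cosmetic difference is in this last step: the paper packages the argument by mapping $G$ into the product $\prod_{p\in\mathcal{P}} Q_{p,g}$ and observing that this product is nilpotent of class at most $m$, whereas you argue directly that a putative finite order $k$ of $\bar g$ would be divisible by every $p\in\mathcal{P}$; these are two phrasings of the same observation.
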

\begin{proof}
The first forward implication follows from the general theory of finitely generated torsion-free nilpotent groups which is due to Mal'cev (see \cite{R} for a nice exposition).  A torsion-free finitely generated nilpotent group sits as the group of integral points of a finite-dimensional real nilpotent Lie group.  The integer lattice can be reduced modulo any prime.  It is obvious that if $g$ survives in $G/\gamma_m(G)$ and this group is torsion-free, then $g$ survives in a $p$-group of nilpotence class no larger than $m$.  The second forward implication is trivial.

Let $\mathcal{P}$ be the infinite set of primes for which $G$ is residually $p$.  Since $\mathcal{P}\neq\emptyset$, $G$ is residually nilpotent, or equivalently \[\bigcap_i\gamma_i(G)=\{1\}.\]  Notice that $G^{ab}\otimes\bQ$ is nontrivial.  Indeed, otherwise $G^{ab}$ would have to be finite of order $n$.  Let $p\in\mathcal{P}$ be relatively prime to $n$.  Since $G$ is residually $p$, $G$ admits a quotient isomorphic to $\bZ/p\bZ$, and the quotient map factors through $G^{ab}$, which is a contradiction.  Notice that the property of being residually $p$ is invariant under passing to a subgroup.  In particular, merely the assumption that $\mathcal{P}$ is infinite guarantees that $G$ has many torsion-free quotients.

Furthermore, it is obvious that $G$ has no torsion.  For every $p\in \mathcal{P}$, there is a $p$-group quotient of $G$ where $g$ is nontrivial, and we may suppose that there is a universal $m=m(G)$ such that the smallest (in terms of nilpotence class) $p$-group quotient in which $g$ survives has nilpotence class $m$.  Thus we may find such a $p$-group quotient $Q_{p,g}$ in which $g$ survives.  We consider the product map \[G\to P=\prod_{p\in\mathcal{P}} Q_{p,g}.\]  We have that $P$ is nilpotent since each $Q_{p,g}$ has nilpotence class bounded by $m$.  It follows that in $G/\gamma_m(G)$, the image of $g$ will not be in the torsion subgroup $T(G/\gamma_m(G))$ (cf. \cite{R}), so that $g$ survives in a torsion-free nilpotent quotient of $G$.
\end{proof}

It seems unlikely that the condition $U$ on $G$ can be removed (cf. examples in Section \ref{s:odd}).  Notice that if $\psi$ is a mapping class which acts unipotently on $H_1(\Sigma,\bZ)$, then it is easy to show that $\pi_1(M_{\psi})$ has property $U$ with respect to the set of all primes.  As a corollary, we obtain another proof of Theorem \ref{t:rtfn}.

Wilton's result shows that orientable geometric $3$-manifolds whose fundamental groups are nontrivial limit groups must admit geometric structures modeled on $S^2\times\bR$, $\bR^3$ or $\bH^2\times\bR$.  As for nontrivial residually torsion-free nilpotent $3$-manifold groups, we have additionally exhibited examples with nil and $\bH^3$ geometry.  $S^3$ geometry is ruled out by nontriviality, and we have ruled out sol geometry.  We finally consider $\widetilde{PSL_2(\bR)}$ geometry:

\begin{thm}\label{t:psl2r}
Let $S^1\to M\to\Sigma$ be a circle bundle over a closed orientable surface $\Sigma$ with trivial monodromy and nontrivial Euler class $e\in H^2(\Sigma,\bZ)$.  Then $G=\pi_1(M)$ is residually torsion-free nilpotent.
\end{thm}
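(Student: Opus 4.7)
The plan is to exploit the central extension $1 \to \bZ \to G \to \pi_1(\Sigma) \to 1$, classified by the Euler class $e \in H^2(\pi_1(\Sigma), \bZ) \cong \bZ$, and to prove residual torsion-free nilpotence by treating separately the elements of $G$ which project nontrivially to $\pi_1(\Sigma)$ and those which lie in the central copy of $\bZ$. In the first case I would invoke the classical fact (via Magnus and the residual freeness of limit groups, as in the last lemma of Section \ref{s:tools}) that closed orientable surface groups are residually torsion-free nilpotent; composing the projection $G \to \pi_1(\Sigma)$ with an appropriate torsion-free nilpotent quotient then detects any $g \in G$ which is nontrivial modulo the center.

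The main case is $1 \ne g \in \bZ \subset G$, where the plan is to produce an explicit Heisenberg-type quotient of $G$. Let $\iota \colon \pi_1(\Sigma) \to H_1(\Sigma, \bZ) \cong \bZ^{2g}$ be the abelianization. The pullback map
\[ \iota^{*} \colon H^2(\bZ^{2g}, \bZ) \longrightarrow H^2(\pi_1(\Sigma), \bZ) \]
is surjective: under the identification of $H^2(\bZ^{2g}, \bZ)$ with $\Lambda^2 H^1(\Sigma, \bZ)$ it is cup product, and Poincar\'e duality applied to closed orientable $\Sigma$ makes the intersection form unimodular, so the dual of the symplectic form maps to a generator of $H^2(\Sigma, \bZ)$. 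Choose a lift $\tilde e \in H^2(\bZ^{2g}, \bZ)$ of $e$ and form the central extension $1 \to \bZ \to H \to \bZ^{2g} \to 1$ of Euler class $\tilde e$. By the universal property of pullback of central extensions, the canonical isomorphism $G \cong \iota^{*}H$ yields a surjective homomorphism $\phi \colon G \to H$ restricting to the identity on the central $\bZ$; in particular any $1 \ne t^k$ in the center of $G$ has nontrivial image in $H$.

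It remains to observe that $H$ is torsion-free nilpotent. It is nilpotent of class at most $2$ by construction, and if $h^n = 1$ with $n \ne 0$ then its image in the torsion-free group $\bZ^{2g}$ vanishes, forcing $h$ into the central $\bZ$, which is itself torsion-free. Combining the two cases shows every nontrivial element of $G$ survives in some torsion-free nilpotent quotient, so $G$ is residually torsion-free nilpotent. The only delicate ingredient is the surjectivity of $\iota^{*}$; this is precisely where the closedness and orientability of $\Sigma$ are used, and it is the one step where the hypotheses of the theorem genuinely enter. I do not anticipate further obstacles.
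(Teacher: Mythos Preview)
Your proposal is correct and follows essentially the same route as the paper: both arguments split into the two cases according to whether $g$ projects nontrivially to $\pi_1(\Sigma)$ or lies in the central $\bZ$, handle the first case via Magnus' result on surface groups, and handle the second by showing that the pullback $\iota^*\colon H^2(\bZ^{2g},\bZ)\to H^2(\Sigma,\bZ)$ is surjective (via cup product and Poincar\'e duality) so that the Euler class lifts and $G$ surjects onto a two-step nilpotent Heisenberg-type group in which the center survives. Your write-up is in fact slightly more explicit than the paper's in checking that the resulting quotient $H$ is torsion-free, and in phrasing the construction via the pullback isomorphism $G\cong\iota^*H$.
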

\begin{proof}
We have that $G$ fits into a non-split central extension \[1\to\bZ\to G\to\pi_1(\Sigma)\to 1.\]  Killing the central copy of $\bZ$, we see that for each $g\in G$ which projects nontrivially to $\pi_1(\Sigma)$, we have $1\neq g\in N$ for some $N\cong \pi_1(\Sigma)/\gamma_i(\pi_1(\Sigma))$.  These are all torsion-free, as is well-known from the work of Magnus.

Let $N$ be a torsion-free nilpotent quotient of $\pi_1(\Sigma)$.  There is a natural map $H^2(N,\bZ)\to H^2(\Sigma,\bZ)$ given by pullback.  If $e$ is in the image of the pullback, then there is a quotient $Q$ of $G$ which fits into a central extension \[1\to\bZ\to Q\to N\to 1.\]

Let $g$ be the genus of $\Sigma$, and fix a complex structure on $\Sigma$ together with a basepoint.  There is a canonical isomorphism $H_1(\Sigma,\bZ)\cong H_1(\bZ^{2g},\bZ)$ given by the period map.  By duality we obtain an isomorphism $H^1(\bZ^{2g},\bZ)\to H^1(\Sigma,\bZ)$.

There is an induced map $H^2(\bZ^{2g},\bZ)\to H^2(\Sigma,\bZ)$.  We have that $H_2(\bZ^{2g},\bZ)$ is naturally identified with $\Lambda^2H_1(\Sigma,\bZ)$ (cf. \cite{GH}, for instance).  The map $H^2(\bZ^{2g},\bZ)\to H^2(\Sigma,\bZ)$ is given by taking $\al,\beta\in H_1(\Sigma,\bZ)$, taking their Poincar\'e duals $\al^*$ and $\beta^*$, taking $\al\wedge\beta\in H_2(\bZ^{2g},\bZ)$ and sending it to $\al^*\cup\beta^*$, which gives a linear function $H_2(\bZ^{2g},\bZ)\to \bZ$ (hence a second cohomology class) and produces an element of $H^2(\Sigma,\bZ)$.  In particular, the map $H^2(\bZ^{2g},\bZ)\to H^2(\Sigma,\bZ)$ is surjective.

It follows that if $z$ is a nontrivial element of the central copy of $\bZ$ then $z$ is nontrivial in a torsion-free nilpotent quotient $Q$ of $G$ which can be described via \[1\to\bZ\to Q\to\bZ^{2g}\to 1.\]  In particular, each element of $G$ survives in a torsion-free nilpotent quotient of $G$.
\end{proof}

\begin{cor}
If $G$ is as in Theorem \ref{t:psl2r} and $p$ is a prime, then $G$ is residually $p$.
\end{cor}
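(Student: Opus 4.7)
The plan is to derive this corollary almost immediately from Theorem \ref{t:psl2r} combined with the classical fact that every finitely generated torsion-free nilpotent group is residually $p$ for every prime $p$. Fix a nontrivial element $g \in G$ and a prime $p$. The proof of Theorem \ref{t:psl2r} actually produces, for each such $g$, a finitely generated torsion-free nilpotent quotient $Q$ of $G$ in which the image of $g$ is nontrivial. Specifically, when $g$ has nontrivial image in $\pi_1(\Sigma)$ we can take $Q = \pi_1(\Sigma)/\gamma_i(\pi_1(\Sigma))$ for large enough $i$, and when $g$ lies in the central $\bZ$ we take the central extension $1 \to \bZ \to Q \to \bZ^{2g} \to 1$ constructed there. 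In both cases $Q$ is finitely generated.

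Next I would invoke Mal'cev's theorem: any finitely generated torsion-free nilpotent group embeds as the lattice of integer points of a simply connected unipotent real Lie group, and the kernels of reduction modulo $p^k$ form a descending chain of normal subgroups of $p$-power index whose intersection is trivial. This is exactly the mechanism used in the $(1) \Rightarrow (2)$ direction of Proposition \ref{p:tfn}. Alternatively, one can give a hands-on induction along the lower central series of $Q$: each graded piece $\gamma_i(Q)/\gamma_{i+1}(Q)$ is a finitely generated torsion-free abelian group, so one can choose $p$-power-index subgroups inside each piece with trivial intersection, and then glue them together into $p$-power-index normal subgroups of $Q$ using Lemma \ref{l:tower} to ensure $p$-power index is preserved under intersection.

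Putting the two ingredients together, for any $1 \neq g \in G$ we obtain a finitely generated torsion-free nilpotent quotient $\phi: G \to Q$ with $\phi(g) \neq 1$, and then a $p$-power-index normal subgroup $K \lhd Q$ with $\phi(g) \notin K$, giving a $p$-group quotient $G \to Q/K$ detecting $g$. Since $g$ and $p$ were arbitrary, $G$ is residually $p$ for every prime $p$. I do not expect a genuine obstacle: the only point that requires attention is checking that the torsion-free nilpotent quotients furnished by Theorem \ref{t:psl2r} are indeed finitely generated (so that Mal'cev's theorem applies), but this is manifest from the explicit descriptions above.
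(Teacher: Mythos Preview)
Your proposal is correct and matches the paper's intended reasoning: the corollary is stated without proof because it is immediate from Theorem \ref{t:psl2r} via the implication $(1)\Rightarrow(2)$ of Proposition \ref{p:tfn}, whose proof invokes exactly the Mal'cev embedding argument you spell out. Your care in verifying that the torsion-free nilpotent quotients are finitely generated is the only detail beyond what the paper makes explicit, and it is easily checked as you indicate.
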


Though the proof of Theorem \ref{t:psl2r} might seem a little ad hoc, there are good reasons why the quotient of $\pi_1(\Sigma)$ we consider is the abelianization.  Indeed, let $C=[\pi_1(\Sigma),\pi_1(\Sigma)]$ and let $N$ be a torsion-free nilpotent quotient of $\pi_1(\Sigma)$.  Suppose we have a torsion-free nilpotent quotient $N'$ of $G<PSL_2(\bR)$ which fits into a central of the form \[1\to\bZ\to N'\to N\to 1,\] where $G$ is a central extension of $\pi_1(\Sigma)$.  Since $C$ is a subgroup of $\pi_1(\Sigma)$, there is also a central extension of the form \[1\to \bZ\to G'\to C\to 1.\]  Since this central extension is classified by a second cohomology class of $C$, the extension splits.  Indeed, $C$ is free and hence has cohomological dimension zero (alternatively $C$ is a free and hence projective object in the category of groups so that any such extension must split).  It is therefore not surprising that the abelianization map on $\pi_1(\Sigma)$ would be sufficient for witnessing the residual torsion-free nilpotence of $G$.

In Section \ref{s:odd} we will briefly consider incompatibilities between residually $p$, residually torsion-free nilpotent, and RFRS groups.

\section{Some remarks on odd embeddings of groups}\label{s:odd}
We have seen above that if we are given a presentation of a fibered $3$-manifold as a surface together with a mapping class, one might not be able to tell easily from the action of the mapping class on the homology of the fiber whether or not the fundamental group of the fiber injects into its pro-$p$ completion.  This is a reflection of the somewhat flexible nature of profinite and pro-$p$ completions.

\begin{prop}
There exists an exhausting sequence of nested subgroups $\{K_i\}<\bZ^{\infty}$ such that $\bZ^{\infty}/K_i$ is a cyclic $p$-group for all $i$.
\end{prop}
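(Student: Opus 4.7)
Reading ``exhausting'' as in Section~3's Lyndon--Schupp argument, where a nested sequence $F_1>F_2>\cdots$ is said to exhaust $F_1$ precisely when $\bigcap_i F_i$ is trivial, the statement asks for a descending chain $K_1\supseteq K_2\supseteq\cdots$ of proper subgroups of $\bZ^\infty$ with trivial intersection, each quotient $\bZ^\infty/K_i$ being cyclic of $p$-power order. (The increasing-with-union-everything reading is impossible given $\{K_i\}<\bZ^\infty$: if each $\bZ^\infty/K_i$ is a nontrivial cyclic $p$-group the indices form a nonincreasing sequence of $p$-powers bounded below by $p$, hence stabilize, and the chain cannot then exhaust $\bZ^\infty$.) My plan is to realize the chain as the pullback under an injective homomorphism $\phi:\bZ^\infty\hookrightarrow\bZ_p$ of the canonical filtration $p^i\bZ_p$ of the $p$-adic integers; since $\bZ_p$ is procyclic of pro-$p$-type, every $\bZ^\infty/K_i$ then automatically embeds in a cyclic $p$-group.

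The construction of $\phi$ is explicit. First I would pick a $p$-adic integer $\alpha\in\bZ_p$ transcendental over $\bQ$; such $\alpha$ exists by cardinality, since $\bZ_p$ has the cardinality of the continuum while $\overline{\bQ}$ is countable. Transcendence guarantees that $1,\alpha,\alpha^2,\ldots$ are $\bZ$-linearly independent inside $\bZ_p$, because any nontrivial integer linear relation among them would furnish a polynomial identity for $\alpha$ over $\bQ$. I then define $\phi$ on the standard basis $\{e_i\}$ of $\bZ^\infty=\bigoplus_{i\geq 1}\bZ$ by $\phi(e_i)=\alpha^{i-1}$; linear independence makes $\phi$ injective, and $\phi(e_1)=1$ is a unit in $\bZ_p$.

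Setting $K_i:=\phi^{-1}(p^i\bZ_p)$ completes the construction. The chain $K_1\supseteq K_2\supseteq\cdots$ is manifestly nested, and each $K_i$ is proper because $e_1\notin K_i$. The composite $\bZ^\infty\xrightarrow{\phi}\bZ_p\twoheadrightarrow\bZ_p/p^i\bZ_p\cong\bZ/p^i\bZ$ is surjective since $\phi(e_1)$ is a unit, so $\bZ^\infty/K_i\cong\bZ/p^i\bZ$, a cyclic $p$-group. Finally,
\[
\bigcap_i K_i \;=\; \phi^{-1}\Bigl(\bigcap_i p^i\bZ_p\Bigr) \;=\; \phi^{-1}(\{0\}) \;=\; \{0\},
\]
using that $\bZ_p$ is Hausdorff for the $p$-adic topology together with the injectivity of $\phi$.

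The one nontrivial ingredient is the existence of a transcendental $p$-adic integer, which is the main (though mild) obstacle and is a standard cardinality argument; everything else is formal. This is precisely the kind of ``odd embedding'' the section advertises: a countably infinite-rank free abelian group is forced into a procyclic pro-$p$ group, witnessing the flexibility of pro-$p$ completions.
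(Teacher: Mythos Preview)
Your proof is correct and follows essentially the same approach as the paper: embed $\bZ^\infty$ into $\bZ_p$ and pull back the filtration by powers of $p$, using that $\bZ_p/p^k\bZ_p\cong\bZ/p^k\bZ$. The paper's argument is terser, invoking only that $\bZ_p$ is torsion-free and uncountable to assert that a generic countable subset is $\bZ$-linearly independent, whereas you give the explicit embedding via powers of a transcendental $\alpha$ and take extra care to secure surjectivity onto each $\bZ/p^i\bZ$; but the underlying idea is identical.
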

\begin{proof}
Notice that $\bZ_p$, the ring of $p$-adic integers, satisfies $\bZ_p/p^k\bZ_p\cong\bZ/p^k\bZ$.  Furthermore, $\bZ_p$ is torsion-free and uncountable.  This means that a generic choice of countably many elements of $\bZ_p$ generate a copy of $\bZ^{\infty}$ in $\bZ_p$, whence the claim.
\end{proof}

More bizarre phenomena can occur with nonabelian groups.  Let $F_3=\langle x,y,z\rangle$, and enumerate the elements of $[F_3,F_3]$.  Let $Q_{m,n}$ be the quotient obtained by introducing the relations $c_n=z$, where $c_n$ is the $n^{th}$ element of $[F_3,F_3]$, and setting all $m$-fold commutators to be trivial.  Let $K_{m,n}$ be the kernel of the surjection $F_3\to Q_{m,n}$.
\begin{prop}
With the setup above, \[\bigcap_{m,n}K_{m,n}=\{1\}.\]
\end{prop}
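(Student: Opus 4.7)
The plan is to show that for every nontrivial $w \in F_3$ there exist $m, n$ with $w \notin K_{m,n}$. The key construction is a family of substitution homomorphisms: for $F_2 = \langle x, y\rangle \leq F_3$ and $c \in [F_2, F_2] \subseteq [F_3, F_3]$, set $\psi_c \colon F_3 \twoheadrightarrow F_2$ by $x \mapsto x$, $y \mapsto y$, $z \mapsto c$. The composition $F_3 \xrightarrow{\psi_c} F_2 \twoheadrightarrow F_2/\gamma_m(F_2)$ factors through $Q_{m,n}$ whenever $c_n = c$ in the enumeration of $[F_3, F_3]$: the target is nilpotent of class $m-1$, so $\gamma_m(F_3)$ dies, and the relation $z = c_n$ is respected by construction. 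Since $F_2$ is residually nilpotent by the last lemma of Section \ref{s:tools}, it suffices to produce, for each $w \neq 1$, some $c \in [F_2, F_2]$ with $\psi_c(w) \neq 1$ in $F_2$.

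Using the normal form for the free product $F_3 = F_2 * \langle z \rangle$, write
\[
w = w_0 z^{k_1} w_1 z^{k_2} \cdots z^{k_r} w_r
\]
with $w_i \in F_2$, $k_i \in \bZ \setminus \{0\}$, and $w_i \neq 1$ for $1 \leq i \leq r-1$. If $r = 0$ then $w \in F_2 \setminus \{1\}$ and $\psi_c(w) = w \neq 1$ for any admissible $c$. For $r \geq 1$, I take $c = [x^M, y^M]$ with $M$ sufficiently large. Each $c^{\pm 1}$ is freely reduced of length $4M$ in $F_2$, and consecutive powers concatenate without internal cancellation (since $c$ ends with $y^{-M}$ and begins with $x^M$), so $|c^{k_i}| = 4M|k_i|$. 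At each junction $c^{k_i} \cdot w_i \cdot c^{k_{i+1}}$ free reduction first consumes at most $|w_i|$ letters of $w_i$; and even if $w_i$ is fully consumed by cancellation through a single homogeneous letter-block of $c$, the cascade can extend through at most one more block of length $M$ in each adjacent $c$-copy, because the next letter in $c$ is of the opposite type. Summing these bounds and using $\sum |k_i| \geq r$, the reduced length of $\psi_c(w)$ in $F_2$ is at least $2(r+1)M - C(w)$ for a constant depending only on the syllable lengths of $w$; taking $M > C(w)$ makes $\psi_c(w) \neq 1$ in $F_2$.

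Having produced such a $c$, residual nilpotence of $F_2$ yields $m$ with $\psi_c(w) \notin \gamma_m(F_2)$. Letting $n$ be the index with $c_n = c$, the composite $F_3 \to F_2/\gamma_m(F_2)$ factors through $Q_{m,n}$ and sends $w$ to a nontrivial element, so $w \notin K_{m,n}$. Since $w$ was arbitrary, $\bigcap_{m,n} K_{m,n} = \{1\}$.

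The main obstacle is the cancellation count in the second paragraph: one must verify that the alternating block structure of $c = [x^M, y^M]$ prevents cascading reduction from traveling through more than one homogeneous $x^{\pm M}$ or $y^{\pm M}$ block per junction, and that fixed-length syllables $w_i$ can trim each block by at most $|w_i|$ letters. Both points are routine free-group normal-form arguments, but together they constitute the crux of the estimate.
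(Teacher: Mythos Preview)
Your argument is correct and follows the same underlying idea as the paper --- substitute $z$ by a suitable commutator and show the resulting word is nontrivial in a nilpotent quotient --- but your execution is different and more explicit than the paper's.

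The paper first disposes of $w$ whose image in $F_2$ (under $z\mapsto 1$) is nontrivial by taking $c_n=1$, and then, for $w$ in the kernel of $F_3\to F_2$, writes $w$ as a product of $F_2$-conjugates of $z^{\pm1}$ and argues rather informally that substituting for $z$ a free-basis element $\zeta$ of a sufficiently deep term $\gamma_i(F_3)$ keeps the word nontrivial. By contrast, you handle all nontrivial $w$ at once via the single explicit substitution $z\mapsto [x^M,y^M]$ and a concrete cancellation estimate in $F_2$; then you invoke residual nilpotence of $F_2$. Your route buys an entirely self-contained, elementary verification: the factoring of $\psi_c$ through $Q_{m,n}$ is immediate since $c\in[F_2,F_2]\subset[F_3,F_3]$ and $\psi_c$ fixes $F_2$, and the length bound is a routine small-cancellation check. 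The paper's route is shorter to state but leaves the key nontriviality step (``the word given by substituting $\zeta$ for $z$ in $w$ will be nontrivial'') to the reader.

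Your cascading bound is correct as stated. In the only dangerous sign patterns ($k_i>0,\,k_{i+1}<0$ or $k_i<0,\,k_{i+1}>0$) the outermost blocks of $c^{k_i}$ and $c^{k_{i+1}}$ carry the same letter with opposite exponents; if $w_i$ is a pure power of that letter (the only way it can be fully absorbed), one block on each side collapses, but the surviving remnant $w_i\neq 1$ then separates the next pair of blocks, which are of the \emph{other} letter with opposite exponents, so the cascade halts. This gives exactly the ``one block per side'' bound you use, and hence a reduced length at least $4M\sum_i|k_i|-2M(r-1)-O_w(1)\geq 2(r+1)M-C(w)$, positive for $M$ large.
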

\begin{proof}
Let $1\neq w\in F_3$, and write $w$ as a reduced word in the generators.  If there is no occurrence of $z$ or $z^{-1}$ in $w$, then we can view $w$ as a word in $F_2$.  Setting $c_n$ to be the identity and taking $m$ sufficiently large, we find that for some $m,n$, $w\notin K_{m,n}$.

Let $K$ be the kernel of the map $F_3\to F_2$ which kills the letter $z$.  By the previous paragraph, we may assume $w\notin K$.  A free basis for $K$ is given by conjugates of $z$ by words in $x$ and $y$.  Write $w$ with respect to this basis.  There is an $i$ such that every nonidentity subword of $w$ (in $x,y,z$) is nontrivial and such that they are all distinct in $F_3/\gamma_i(F_3)$.

Clearly $\gamma_i(F_3)$ is free and is equipped with a conjugation action by $F_3$.  Nontrivial elements of $F_3/\gamma_i(F_3)$ can be used to take a free basis element of $\gamma_i(F_3)$ and to produce other ones.  Choose such a basis element $\zeta$, and consider the quotient of $F_3$ given by imposing $z=\zeta$.  It follows that if $i$ is chosen to be sufficiently large, then the word given by substituting $\zeta$ for $z$ in $w$ will be nontrivial.  It follows that $w$ is nontrivial in some nilpotent quotient $N$ of $F_3$ which satisfies $1=\overline{z}\in N^{ab}$.
\end{proof}

Finally, we consider some of the interplay between residually torsion-free nilpotent $3$-manifold groups and the RFRS condition.  Recall that in \cite{A2}, Agol develops the RFRS condition.  A group is called {\bf RFRS} or {\bf residually finite rationally solvable} if there exists an exhaustive filtration of normal (in $G$) finite index subgroups of $G$ \[G=G_0>G_1>G_2>\cdots\] such that $G_{i+1}>ker\{G_i\to G_i^{ab}\otimes\bQ\}$.  Agol proves:
\begin{thm}
Let $M$ be a $3$-manifold with $\chi(M)=0$, and suppose that $\pi_1(M)$ is RFRS.  Let $\phi\in H^1(M,\bZ)$ be a non-fibered cohomology class.  Then there is a finite cover $M'\to M$ such that the pullback of $\phi$ to $H^1(M',\bZ)$ such that $\phi$ lies in the cone over the boundary of a fibered face of $H^1(M',\bZ)$.
\end{thm}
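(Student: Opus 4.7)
Since fiberedness of a cohomology class is invariant under pullback to finite covers, the pullback of $\phi$ to any finite cover $M'$ remains non-fibered; the task is therefore to exhibit a fibered face $F$ of the Thurston norm unit ball of $H^1(M',\bR)$ such that the pullback of $\phi$ lies in the closed cone over $F$ but on a proper subface of it, i.e., in the cone over $\partial F$.

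The plan is to combine Gabai's sutured manifold hierarchy with the RFRS filtration. First, represent $\phi$ by a Thurston-norm-minimizing oriented properly embedded surface $S\subset M$ and cut along it to obtain a taut sutured manifold $(M\setminus S,\gamma)$; since $\phi$ is non-fibered, this is not a product $S\times I$. Run a Gabai hierarchy to obtain a sequence of decomposition surfaces $S_1,\ldots,S_k$ terminating the sutured complement in a disjoint union of balls. Each $S_i$ represents a relative second homology class of the piece at stage $i$, hence by duality a relative first cohomology class of that piece.

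Second, use the RFRS filtration $G=G_0>G_1>G_2>\cdots$, with $G_{i+1}\supset \ker(G_i\to G_i^{ab}\otimes\bQ)$, to promote these relative classes to integral cohomology classes defined on finite covers of $M$. The defining property of RFRS is precisely what lets one realize, at each stage, any rational cohomology class of a stage-$i$ subgroup as the pullback of an integral class from the stage-$(i{+}1)$ cover. Combining the finitely many stages that are needed, construct a single finite cover $M'\to M$ on which the lifted sutured hierarchy of the lift of $S$ consists entirely of surfaces dual to globally defined integral classes in $H^1(M',\bZ)$. In $M'$ the lifts of $S,S_1,\ldots,S_k$ then span a simplicial subcone $\mathcal{C}\subset H^1(M',\bR)$ of Thurston-norm-minimizing classes. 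Because the lifted hierarchy now certifies that each stage is a product sutured manifold, Thurston's fibered face criterion implies that $\mathcal{C}$ lies in the closed cone over a single fibered face $F$ of the Thurston ball of $M'$. The pullback of $\phi$ sits inside $\mathcal{C}$ and hence in the closed cone over $F$; since it remains non-fibered, it cannot be in the open cone on $F$, so it must lie in the cone over $\partial F$.

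The main obstacle is the bridge between algebra and topology in the second step. One must track the behavior of the Thurston norm under passage to finite covers (invoking Gabai's multiplicativity for irreducible $3$-manifolds), argue that each $S_i$ — which a priori lives in a \emph{piece} of the hierarchy rather than in $M$ itself — lifts coherently to a finite cover, and arrange that a single finite cover $M'$ simultaneously realizes all $k$ such lifts in a way compatible with the RFRS filtration. One must also verify that the resulting simplicial cone $\mathcal{C}$ really is contained in the closed cone of a single fibered face, i.e., that the product character of the lifted sutured hierarchy translates correctly into the fiberedness of nearby integral classes in $H^1(M',\bZ)$.
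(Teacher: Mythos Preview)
This theorem is not proved in the paper at all: it is quoted as a result of Agol, with the citation \cite{A2}, and the paper uses it as a black box in the discussion of RFRS versus residual torsion-free nilpotence. There is therefore no ``paper's own proof'' to compare your attempt against.

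As for the attempt itself, you are gesturing in the right general direction --- Agol's argument does proceed via sutured manifold theory and uses the RFRS filtration to promote relative classes in pieces to global cohomology classes on finite covers --- but what you have written is a plan rather than a proof. The second step in particular is where all the content lives, and you have essentially restated the conclusion you want rather than explained the mechanism. Specifically: the RFRS condition gives you that $\ker(G_i\to H_1(G_i;\bQ))$ is contained in $G_{i+1}$, and one must argue carefully that a class carried by a decomposition surface in a sutured piece, which a priori only defines a cohomology class of that piece (not of $M$ or of any $G_i$), becomes dual to a global class after passing to the cover corresponding to an appropriate $G_i$. You have not said how the sutured pieces relate to the subgroups $G_i$, nor why the hierarchy terminates in products after lifting. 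The claim that ``the lifted hierarchy now certifies that each stage is a product sutured manifold'' is exactly the heart of Agol's argument and cannot be asserted without work. If you want to actually prove this, you should consult \cite{A2} directly; the argument there is subtle and not something one reconstructs from the statement alone.
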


If $\gam$ is the fundamental group of a nontrivial circle bundle $M$ over a surface $\Sigma$, then $M$ should not fiber over the circle.  The nontriviality of the Euler class of the bundle allows us to verify this fact directly using Agol's theorem.  Thus we will make precise the statement that residual torsion-free nilpotence and RFRS have little to do with each other.

To warm up, we have:
\begin{prop}
The integral Heisenberg group is not RFRS.
\end{prop}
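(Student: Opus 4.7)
The plan is to show that any purported RFRS filtration $H = H_0 > H_1 > H_2 > \cdots$ of the integral Heisenberg group $H$ fails to be exhaustive, because the central generator $z$ survives in every term. Recall that $H = \langle x, y, z \mid [x,y]=z,\ [x,z]=[y,z]=1\rangle$, so that $[H,H] = Z(H) = \langle z\rangle$ and $H^{ab} \cong \bZ^2$ is torsion-free.

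I would argue by induction on $i$ that $z \in H_i$ for every $i$. For the base case, since $H^{ab}$ is torsion-free, $\ker(H \to H^{ab} \otimes \bQ) = [H,H] = \langle z \rangle$, so the defining property of the RFRS chain forces $z \in H_1$. For the inductive step, suppose $z \in H_i$. Since $H_i$ has finite index in $H$, its image in $\bZ^2$ is a sublattice of some positive index $d$. Pick $x', y' \in H_i$ projecting to a $\bZ$-basis $(a,b), (c,e)$ of this sublattice, so that $ae - bc = \pm d$. A direct computation using that $z$ is central and $[x,y] = z$ yields $[x', y'] = z^{ae - bc} = z^{\pm d}$. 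Thus a nonzero power of $z$ lies in $[H_i, H_i]$, so the image of $z$ in $H_i^{ab}$ has finite order, and consequently $z \in \ker(H_i \to H_i^{ab} \otimes \bQ) \subseteq H_{i+1}$.

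This completes the induction, yielding $z \in \bigcap_i H_i$ and contradicting the exhaustiveness of the filtration. The only piece of computation is the commutator identity $[x^a y^b, x^c y^e] = z^{ae - bc}$, which is immediate from $[x,y] = z$ together with the centrality of $z$ and the bilinearity of the commutator bracket in a nilpotent group of class $2$; no real obstacle arises. The conceptual point is that in a finitely generated torsion-free nilpotent group of class $2$ with cyclic center, the center is always torsion in the abelianization of any finite-index subgroup, so the rational abelianizations appearing in the RFRS condition can never separate the center from the identity.
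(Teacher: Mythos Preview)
Your proof is correct and follows essentially the same approach as the paper's: both argue by induction that the center $\langle z\rangle$ lies in every term of any candidate RFRS filtration, using that in any finite-index subgroup $H_i$ one can produce a nontrivial power of $z$ as a commutator of elements of $H_i$, so $z$ becomes torsion in $H_i^{ab}$ and survives into $H_{i+1}$. The only cosmetic difference is that you pick a general $\bZ$-basis of the image sublattice and invoke bilinearity to get $[x^ay^b,x^cy^e]=z^{ae-bc}$, while the paper chooses lifts of the special form $x^nz^i$ and $y^mz^j$ to get $z^{\pm mn}$; the underlying idea is identical.
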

\begin{proof}
Let $\{G_i\}$ be any candidate for an exhausting sequence which witnesses the RFRS condition.  We present the integral Heisenberg group as \[H=\langle x,y,z\mid [x,y]=z,\,[x,z]=[y,z]=1\rangle.\]  We have the abelianization map $\phi:H\to\bZ^2$.  Since each $G_i<H$ has finite index, we may restrict the map $\phi$ to $G_i$ to obtain a finite index subgroup of $\bZ^2$.  Note that in general $\phi$ does not restrict to the abelianization map of $G_i$.  Let $a,b\in\bZ^2$ be two elements in the image of $G_i$ in $\bZ^2$ which pull back to $x^nz^i$ and $y^mz^j$ respectively.  It is easy to verify that the commutator of these two elements is $z^{\pm mn}$, depending on the order of the commutation.  It follows that $[G_i,G_i]$ contains a finite index subgroup of the center of $H$.  The kernel of the abelianization map of $H$ is precisely the center $Z$ of $H$.  By induction, we have that $Z<G_i$.  Since the abelianization map of $G_i$ sends $z$ to a finite order element, $Z$ is in the kernel of the map $G_i\to G_i^{ab}\otimes\bQ$, so that $Z<G_{i+1}$.  It follows that $\{G_i\}$ does not exhaust $H$.
\end{proof}
Though it is not clear from the definition of RFRS, the same argument shows that $H$ is not virtually RFRS.  It is easy to see from this argument that a torsion-free nilpotent group $N$ which is classified by a nontrivial $e\in H^2(N/Z,Z)$ will generally not be RFRS.

\begin{prop}
Let $\gam=\pi_1(M)$ where $M$ is an orientable circle bundle with trivial monodromy over a closed orientable surface $\Sigma$, and suppose the bundle has a nontrivial Euler class $e\in H^2(\Sigma,\bZ)$.  Then $\gam$ is not RFRS.
\end{prop}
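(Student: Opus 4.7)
The plan is to mimic, almost verbatim, the argument for the integral Heisenberg group given above. Suppose for contradiction that $\gam$ admits an exhausting RFRS filtration $\gam=G_0>G_1>G_2>\cdots$, and let $Z\cong\bZ$ denote the central fiber subgroup, so that we have the central extension $1\to Z\to\gam\to\pi_1(\Sigma)\to 1$ classified by the nonzero Euler class $e\in H^2(\Sigma,\bZ)$. I would prove by induction on $i$ that $Z\leq G_i$ for every $i$, which immediately contradicts $\bigcap_i G_i=\{1\}$.

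The base case is immediate. For the inductive step, assume $Z\leq G_i$. Since $G_i$ has finite index in $\gam$, the quotient $H_i=G_i/Z$ is a finite index subgroup of $\pi_1(\Sigma)$, and hence the fundamental group of a finite cover $\Sigma_i\to\Sigma$ of some degree $d$. The pulled-back circle bundle $M_i\to\Sigma_i$ has fundamental group canonically isomorphic to $G_i$, and its Euler class is $d\cdot e$ (under the identification $H^2(\Sigma_i,\bZ)\cong\bZ$), which is nonzero. Choose a standard symplectic generating set $\alpha_1,\beta_1,\dots,\alpha_{g_i},\beta_{g_i}$ for $H_i=\pi_1(\Sigma_i)$ and arbitrary lifts $\tilde\alpha_j,\tilde\beta_j\in G_i$. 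Then the surface relation lifts to an identity of the form $\prod_j[\tilde\alpha_j,\tilde\beta_j]=z^{n_i}$ in $G_i$, where $z$ generates $Z$ and $n_i\neq 0$ is the pulled-back Euler number. Abelianizing $G_i$ kills all the commutators on the left, so $z$ becomes $n_i$-torsion in $G_i^{ab}$ and hence trivial in $G_i^{ab}\otimes\bQ$. Thus $z\in\ker(G_i\to G_i^{ab}\otimes\bQ)\leq G_{i+1}$, and since $Z=\langle z\rangle$ we conclude $Z\leq G_{i+1}$.

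The one step that requires genuine input, and which I expect to be the main obstacle, is the identification $\prod_j[\tilde\alpha_j,\tilde\beta_j]=z^{n_i}$ with $n_i\neq 0$. This is essentially the statement that the Euler class of a central $\bZ$-extension of a closed orientable surface group is recovered by lifting the surface relator to the extension, combined with naturality of Euler classes under finite covers (which accounts for the factor of $d$). One may alternatively bypass this entirely by running the Gysin sequence for $S^1\to M_i\to\Sigma_i$ and reading off that the image of the fiber class in $H_1(M_i,\bZ)\cong G_i^{ab}$ is torsion of order $|n_i|$; the two approaches deliver the same conclusion. Notice that, just as in the Heisenberg case, the argument in fact rules out $\gam$ being virtually RFRS, since the whole inductive mechanism still applies after replacing $\gam$ with any finite index subgroup containing $Z$ (which any finite index subgroup does, up to passing to a further finite index subgroup).
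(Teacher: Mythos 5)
Your proof is correct and follows the same inductive skeleton as the paper: assume $Z\leq G_i$, show the central generator becomes torsion in $G_i^{\mathrm{ab}}$, hence lands in $\ker(G_i\to G_i^{\mathrm{ab}}\otimes\bQ)\leq G_{i+1}$. The only difference is how the two arguments verify that the restriction of the Euler class to $G_i$ remains nonzero: the paper pushes the extension to $H_1(\Sigma,\bZ)\cong\bZ^{2g}$, restricts to the finite-index image $A$, and invokes the corestriction--restriction formula ($\mathrm{Cor}\circ\mathrm{Res}$ is multiplication by the index) to conclude $\mathrm{Res}(e)\neq 0$; you instead realize $G_i$ as the fundamental group of the pulled-back circle bundle over the corresponding cover $\Sigma_i\to\Sigma$ and read off that the surface relator lifts to $z^{n_i}$ with $n_i = d\cdot e\neq 0$ by naturality of Euler numbers under degree-$d$ covers. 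These are the same fact in different dress (naturality of Euler class under covers is precisely the corestriction--restriction computation applied to the top cohomology of the surface), so I'd call this the same proof; your version is somewhat more concrete and self-contained, as it avoids the surjectivity of $H^2(\bZ^{2g},\bZ)\to H^2(\Sigma,\bZ)$ that the paper implicitly relies on, while the paper's version generalizes more readily beyond surface groups. Your closing remark that the argument in fact rules out virtually RFRS is also correct and is the analogue of the remark the paper makes after the Heisenberg case.
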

\begin{proof}
Let $\{G_i\}$ be any candidate for an exhausting sequence which witnesses the RFRS condition on $\gam$.  We will again show that $\{G_i\}$ cannot exhaust $\gam$.  We view $\gam$ as a central extension \[1\to\bZ\to\gam\to\pi_1(\Sigma)\to 1.\]  Consider $\gam^{ab}$.  Clearly there is a surjection $\gam^{ab}\to H_1(\Sigma,\bZ)$.  Suppose that the central copy of $\bZ$ in $\gam$ maps injectively to $\gam^{ab}$.  We would then be able to describe $\gam^{ab}$ as a central extension of the form \[1\to\bZ\to \gam^{ab}\to H_1(\Sigma,\bZ)\to 1,\] and since $\gam^{ab}$ is abelian the extension would have to split.  In particular, its classifying cocycle in $H^2(\bZ^{2g},\bZ)$ would be trivial, where $g$ is the genus of $\Sigma$.  The pullback cocycle in $H^2(\Sigma,\bZ)$ would also be zero, so that the extension describing $\gam$ itself would have to split.  It follows that the central copy of $\bZ$ is in the kernel of the map $\gam\to\gam^{ab}\otimes\bQ$.  We therefore have $\bZ<G_1$.

We inductively suppose that $\bZ<G_i$.  Since $G_i$ has finite index, the image of the restriction of the map $\gam\to H_1(\Sigma,\bZ)$ to $G_i$ has finite index, and we call this subgroup $A$.  Then $A$ gives rise to a central extension of the form \[1\to\bZ_i\to N\to A\to 1,\] where the central copy of $\bZ=\bZ_i$ is $\bZ\cap G_i$.  The inclusion $A<\bZ^{2g}$ gives a map $H^2(\bZ^{2g},\bZ)\to H^2(A,\bZ)$ (the restriction map).  Recall that by general cohomology of groups there is a natural corestriction map $H^2(A,\bZ)\to H^2(\bZ^{2g},\bZ)$, and the composition of the corestriction with the restriction (often written $Cor\circ Res$) is multiplication by $[\bZ^{2g}:S]$ (cf. \cite{Br}).  It follows that if $e\neq 0$ then $Cor\circ Res(e)\neq 0$ so that $Res(e)\neq 0$.  In particular, the extension \[1\to\bZ_i\to N\to A\to 1\] is non-split.  Since $A$ is a torsion-free quotient of $G_i^{ab}$, we cannot have $\bZ_i$ mapping injectively to $G_i^{ab}$, so that $\bZ_i$ is in the kernel of the map $G_i\to G_i^{ab}\otimes\bQ$.  It follows that $\bZ_i<G_{i+1}$.  Since we assumed $\bZ_i=\bZ$, we have that the original central copy of $\bZ$ was contained in each $G_i$, which is what we set out to prove.
\end{proof}

Since Agol's theorem is a criterion which is useful for proving that many hyperbolic $3$-manifolds are virtually fibered, the previous propositions shows that it is unlikely that residual torsion-free nilpotence of hyperbolic $3$-manifold groups will be helpful in resolving questions about virtual fibering, and Proposition \ref{p:tfn} shows that being residually $p$ at many primes is also unlikely to help.

\end{document}